%

\documentclass[12pt]{amsart}
\usepackage{amscd,amssymb}

\usepackage{hyperref}

\hyphenation{group-oid group-oids}

\numberwithin{equation}{section}

\newcommand{\fullref}[1]{\ref{#1} on page~\pageref{#1}}

\newcommand{\ndash}{\nobreakdash-\hspace{0pt}}
\newcommand{\Ndash}{\nobreakdash--}

\newcommand{\dd}{{\mathrm{d}}}

\newcommand{\hagrid}{{\calC_\pi(M;C_0,C_1)}}
\newcommand{\hagr}[2]{{\calC_\pi(M;{#1},{#2})}}
\newcommand{\hagridbis}{{\calC_\pi(M;C_1,C_2)}}

\newcommand{\fluffy}{{\calC_\pi(M)}}

\newcommand{\rubeus}{{\calC_\pi^{M;C_0,C_1}}}
\newcommand{\rub}[2]{{\calC_\pi^{M;{#1},{#2}}}}

\newcommand{\Plie}{P_{C_0,C_1}\Omega^1(M)}

\newcommand{\Ozero}{\Omega^0(I,T_{X(0)}M)}
\newcommand{\Oone}{\Omega^1(I,T^*_{X(0)}M)}
\newcommand{\Ob}{\Omega^0(I,T^*_{X(0)}M)}

\DeclareMathOperator{\Ann}{Ann}
\DeclareMathOperator{\Ima}{Im}

\DeclareMathOperator{\EL}{EL}

\newcommand{\id}{\mathrm{id}}

\newcommand{\toto}{\rightrightarrows}
\newcommand{\up}{\underline{p}}

\DeclareMathOperator{\End}{End}
\DeclareMathOperator{\Hom}{Hom}
\DeclareMathOperator{\Iso}{Iso}

\newcommand{\tpm}{T^*PM(C_0,C_1)}

\newtheorem{Thm}{Theorem}[section]
\newtheorem{Prop}[Thm]{Proposition}
\newtheorem{Lem}[Thm]{Lemma}
\newtheorem{Cor}[Thm]{Corollary}

\newtheorem*{Thm*}{Theorem}
\newtheorem*{Lem*}{Lemma}

\theoremstyle{remark}
\newtheorem{Rem}[Thm]{Remark}
\newtheorem*{Ack}{Acknowledgment}

\theoremstyle{definition}

\newtheorem{Exa}[Thm]{Example}

\newcommand{\braket}[2]{\left\langle{\,{#1}\,,\,{#2}\,}\right\rangle}

\newcommand{\Lie}[2]{{\left[{\,{#1}\,,\,{#2}\,}\right]}}

\newcommand{\G}{\mathsf{G}}
\newcommand{\C}{\mathsf{C}}

\newcommand{\bbR}{{\mathbb{R}}}

\newcommand{\de}{\partial}

\newcommand{\calC}{\mathcal{C}}

\newcommand{\calM}{\mathcal{M}}

\newcommand{\calF}{\mathcal{F}}

\newcommand{\be}{{\mathbf{e}}}

\DeclareMathOperator{\LLL}{L}

\def\gpd{\,\lower1pt\hbox{$\longrightarrow$}\hskip-.24in\raise2pt
               \hbox{$\longrightarrow$}\,}

\let\Tilde=\widetilde

\let\Hat=\widehat

\DeclareMathOperator{\im}{im}

\newcommand\qq{}

\newcommand\lmp[1]{{\qq Lett.\ Math.\ Phys.\ \bf #1}}

\newcommand\anm[1]{{\qq Ann.\ Math.\ \bf #1}}

\newcommand\jdg[1]{{\qq J.\ Diff.\ Geom.\ \bf #1}}

\newcommand\travm[1]{{\qq Travaux ma\-th\'e\-ma\-ti\-ques \bf #1}}

\begin{document}
\title
{Coisotropic Submanifolds and Dual Pairs}


\author[A.~S.~Cattaneo]{Alberto~S.~Cattaneo}
\address{Institut f\"ur Mathematik, Universit\"at Z\"urich\\
Winterthurerstrasse 190, CH-8057 Z\"urich, Switzerland}  
\email{alberto.cattaneo@math.uzh.ch}


\keywords{coisotropic submanifolds, dual pairs, Poisson sigma model, Lagrangian field theories with boundary}

\subjclass[2010]{Primary 53D17; Secondary 81T45 53D20 58H05}

\thanks{A.~S.~C. acknowledges partial support of SNF Grant No.~200020-131813/1.}


\begin{abstract}
The Poisson sigma model is a widely studied two\ndash dimensional topological field theory.
This note shows that boundary conditions for the Poisson sigma model are
related to coisotropic submanifolds 
(a result announced in [math.QA/0309180])
and that
the corresponding reduced phase space is a (possibly singular) dual pair
between the reduced spaces of the given two coisotropic submanifolds.
In addition the generalization to a more general tensor field is considered and it is shown
that the theory produces Lagrangian evolution relations if and only if the tensor field is Poisson.
\end{abstract}

\maketitle

\tableofcontents

\section{Introduction}\label{intro}
Let $M$ be a finite-dimensional manifold and $I$ the unit interval $[0,1]$. We denote by 
$PM:=C^1(I,M)$ the space of differentiable paths on $M$ and by $T^*PM$ the space
of bundle maps $TI\to T^*M$ with continuously differentiable base map and continuous fiber map.
We consider $PM$ as a Banach manifold and $T^*PM$ as a Banach, weak symplectic manifold. (A weak symplectic form is a closed
$2$\ndash form that induces an injective map from the tangent to the cotangent bundle.)
The canonical symplectic form $\Omega$ is the differential of the canonical $1$\ndash form
$\Theta$. If we denote an element of $T^*PM$ by $(X,\eta)$ where $X$ is the differentiable base map and
the fiber map $\eta$ is regarded as a continuous section of $T^*I\otimes X^*T^*M$, we have
\[
\Theta(X,\eta)(\Hat\xi)=\int_I \braket\eta\xi,
\]
where $\Hat\xi$ is a tangent vector at $(X,\eta)$, $\xi$ is 
its projection to $T_XPM=\Gamma(X^*TM)$,
and $\braket{\ }{\ }$ is
the canonical pairing between the cotangent and the tangent bundles to $M$.

Let now $\pi$ be a section of $TM\otimes TM$. We denote by $\pi^\sharp$ the induced bundle map
$T^*M\to TM$ satisfying 
\[
\braket{\pi^\sharp(x)\sigma}\tau=\pi(x)(\sigma,\tau), \quad
\forall x\in M,\quad
\forall\sigma,\tau\in T^*_xM.
\]
Given two submanifolds $C_0$ and $C_1$ of $M$, we denote by $\hagrid$ the space of 
``$\pi$\ndash compatible paths'' from $C_0$ to $C_1$; i.e.,
\begin{multline*}
\hagrid:=\{(X,\eta)\in T^*PM :
\dd X + \pi^\sharp(X)\eta = 0,\\
X(0)\in C_0,\ X(1)\in C_1\},
\end{multline*}
where the differential $\dd X$ of the base map is regarded as a section of 
$T^*I\otimes X^*TM$.
By using the implicit function theorem, one can easily prove \cite{CF01} 
that $\hagr MM$ is a Banach submanifold
of $T^*PM$. 
In general, for other submanifolds $C_0$ and $C_1$, 
$\hagrid$ is not a Banach submanifold.\footnote{The simplest example is when $\pi$ is identically zero.
In this case $\hagrid$ is a fibration over $B:=C_0\cap C_1$ with fiber at $x$ given by 
$\Omega^1(I,T^*_xM)$. If the basis $B$ is not a manifold, so neither is $\hagrid$.}

Anyway, even if $\hagrid$ is not a submanifold, we may define its Zarisky tangent space at each 
point. Namely, we first define the submanifold $\tpm$ of $T^*PM$ consisting of
bundle maps whose base maps connect $C_0$ to $C_1$. Then the tangent space at $(X,\eta)\in\hagrid$
is defined as the subspace of vectors in $T_{(X,\eta)}\tpm$ satisfying the linearized equation.

In general, we will call subvariety (of a smooth manifold) the common 
zero set of a family of smooth functions
and, 
by abuse of notation, we will call tangent bundle the union of the Zarisky tangent spaces to a 
subvariety. 
Given a symplectic manifold $(\calM,\omega)$ and a subvariety $\calC$,
we define, again by abuse of notation,
the symplectic orthogonal bundle $T^\perp\calC$ to $\calC$ as the union
$T_x^\perp\calC$, $x\in\calC$, with
\[
T_x^\perp\calC:=\{v\in T_x\calM : \omega_x(v,w)=0\ \forall w\in T_x\calC\}.
\]
The subvariety $\calC$ is said to be coisotropic if $T^\perp\calC\subset T\calC$.

\subsection{The main results}
In Sect.~\ref{s:tangent}
we study the tangent bundle and the symplectic orthogonal
bundle to $\hagrid$. 
Relying on the results of Sect.~\ref{s:tangent}, we prove in subsection~\ref{s:proof1}
(viz., Props.~\ref{oneone} and~\ref{onetwo}) the following
\begin{Thm}\label{t:one}
$\hagr MM$ is coisotropic 
in $T^*PM$ if{f} $\pi$\/ is a Poisson bivector field.
\end{Thm}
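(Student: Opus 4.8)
The plan is to reduce the coisotropy condition to a pointwise tensorial identity on $M$ by combining the descriptions of the tangent and symplectic orthogonal bundles furnished by Section~\ref{s:tangent}. Since coisotropy of $\hagr MM$ means $T^\perp\hagr MM\subset T\hagr MM$, the strategy is to take a generic element of the orthogonal bundle, feed it into the linearized equation that cuts out the tangent bundle, and show that the result is controlled precisely by the Schouten bracket $[\pi,\pi]$. The expected payoff is that the obstruction to coisotropy is \emph{exactly} the Jacobiator of $\pi$, so that the two implications of the theorem fall out simultaneously.

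Concretely, I would work in a local chart $x^i$ on $M$, writing an element of $\hagr MM$ as $(X,\eta)$ with $\eta=\eta_i\,\dd t$, so that the defining equation $\dd X+\pi^\sharp(X)\eta=0$ reads $\dot X^i+\pi^{ij}(X)\eta_j=0$. Linearizing, the tangent bundle at $(X,\eta)$ consists of the pairs $(\xi,\beta)$ with
\begin{equation*}
\dot\xi^i+\partial_k\pi^{ij}(X)\,\xi^k\eta_j+\pi^{ij}(X)\,\beta_j=0 .
\end{equation*}
On the other side, by Section~\ref{s:tangent} the symplectic orthogonal bundle is spanned by the Hamiltonian vector fields of the smeared constraints $H_\mu=\int_I\mu_i\bigl(\dot X^i+\pi^{ij}(X)\eta_j\bigr)\,\dd t$; here, because $C_0=C_1=M$ has vanishing conormal, the multiplier $\mu$ is an arbitrary section vanishing at the two endpoints. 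An integration by parts (whose boundary term dies thanks to $\mu(0)=\mu(1)=0$) identifies the corresponding Hamiltonian vector field as
\begin{equation*}
\xi^i=\pi^{ij}(X)\,\mu_j,\qquad
\beta_i=-\dot\mu_i+\mu_k\,\partial_i\pi^{kj}(X)\,\eta_j .
\end{equation*}

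The heart of the argument is then a direct substitution of this pair into the linearized constraint. Using $\dot X^l=-\pi^{lm}\eta_m$ on $\hagr MM$ to evaluate $\dot\xi^i$, the two terms carrying $\dot\mu$ cancel, and after relabelling the contracted indices and invoking the antisymmetry of $\pi$ the surviving three terms collapse to $\tfrac12\,[\pi,\pi]^{iab}(X)\,\mu_a\eta_b$, i.e.\ to the Jacobiator of $\pi$ contracted against $\mu$ and $\eta$. Thus every orthogonal vector is tangent if and only if this expression vanishes identically. If $\pi$ is Poisson then $[\pi,\pi]=0$ and $T^\perp\hagr MM\subset T\hagr MM$, which is the content of Prop.~\ref{oneone}. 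Conversely, if $\hagr MM$ is coisotropic the expression must vanish for all admissible $\mu$ and $\eta$; since $\mu$ may be an arbitrary interior bump and, on the constraint surface, $\eta(t_0)$ still ranges over all of $T^*_{X(t_0)}M$ (solving $\dot X=-\pi^\sharp(X)\eta$ as an ODE for $X$ with prescribed $\eta$ constrains only $\pi^\sharp\eta$, not $\eta$ itself), and since $X$ can be made to pass through any point of $M$, one concludes $[\pi,\pi]\equiv0$, which is Prop.~\ref{onetwo}.

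The main obstacle I anticipate is not this computation but the rigorous identification, in the weak symplectic Banach setting, of the symplectic orthogonal bundle with the span of these Hamiltonian vector fields: the weakness of $\Omega$ means $T^\perp\hagr MM$ could a priori be strictly larger than the image of the constraints, and pinning it down, together with the correct endpoint behaviour of $\mu$ dictated by $C_0$ and $C_1$, is exactly what Section~\ref{s:tangent} is for. A minor secondary point is to check that the coordinate computation is legitimate: the output is the tensor $[\pi,\pi]$, so the conclusion is chart\ndash independent, but one should either phrase the linearization covariantly or verify that the apparent connection terms cancel.
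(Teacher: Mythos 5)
Your computational core---substituting the putative generators of $T^\perp\hagr MM$ into the linearized constraint and finding that the obstruction is the Jacobiator---is the right idea and is, in twisted form, what the paper does via the tensor $T=D\pi^\sharp$ and Lemmas~\ref{l:piP} and~\ref{l:Ppi}. But the step you defer to Section~\ref{s:tangent} is not a technicality that section hands you; it is the actual content of the proof, and the route you propose for it is the one the paper explicitly warns against. For the ``if'' direction you must show that \emph{every} element of $T^\perp\hagr MM$ is tangent, so you need an upper bound on $T^\perp$, i.e.\ a proof that it is no larger than the span of the Hamiltonian vector fields of the smeared constraints. In the weak symplectic Banach setting this is precisely what is not automatic: the paper states this verbatim in the Remark closing subsubsection~\ref{s:symc} (the Hamiltonian vector fields of functions in the vanishing ideal need not span the whole characteristic distribution). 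The paper instead computes $T^\perp$ from scratch: it writes the pairing condition $\int_I\braket\phi{\Tilde\lambda+(P^\sharp)^t\int_\bullet^1\Tilde\phi}-\braket{\int_I\Tilde\phi}{\lambda_0}=0$ for all $(\lambda_0,\phi)$, and, since for $C_0=C_1=M$ these are unconstrained, extracts the explicit characterization \eqref{e:tlambda}--\eqref{e:tphi}; only then does the comparison with the tangency equation begin. Without this, your forward implication only proves that a subspace of $T^\perp$ is tangent.

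Second, in the converse direction you ``invoke the antisymmetry of $\pi$'' to collapse the surviving terms into $\tfrac12[\pi,\pi]$. But $\pi$ is a priori only a section of $TM\otimes TM$, and its skew-symmetry is part of the conclusion, not a hypothesis. The paper derives it from coisotropy: comparing \eqref{e:tlambda} with the tangency equation yields \eqref{e:voldemort}, and localizing $\Tilde\phi$ on disjoint bumps forces separately that $P$ is constant \emph{and} that $(P^\sharp)^t+P^\sharp=0$; Lemma~\ref{l:Ppi} then delivers both the bivector property and the Jacobi identity. Your genericity argument for recovering the pointwise identity (arbitrary interior bump $\mu$, $\eta$ ranging over all of $T^*_{X(t_0)}M$ on the constraint surface, $X$ through any point of $M$) is sound and essentially reproduces the proof of Lemma~\ref{l:Ppi}, but it must be run on the full, unsymmetrized obstruction rather than on $[\pi,\pi]$ alone.
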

Recall that a Poisson bivector field $\pi$ is a skew-symmetric $2$\ndash tensor field
satisfying $\Lie\pi\pi=0$, where $\Lie{\ }{\ }$ is the Schouten--Nijenhuis bracket.
A proof of
the if-part of the Theorem is contained in \cite{CF01}, but the result was already known
\cite{SS,I} in the case when one considers loops instead of paths.

If $\calC$ is a coisotropic submanifold of a weak symplectic Banach manifold,
$T^\perp\calC$ is an integrable distribution 
and the leaf space $\underline\calC$, 
also called the reduced phase space, inherits a weak symplectic structure if it is a smooth manifold.
The reduced phase space $\underline{\hagr MM}$ has been shown in \cite{CF01} to also have the structure
of a topological groupoid; if it is smooth, 
it is a symplectic groupoid integrating the Poisson manifold
$M$. 


Now notice that not all possible
boundary conditions allow solutions to the constraint equation. In other words,
the maps
\begin{equation}\label{e:p}
p_i\colon\hagrid\to C_i,
\qquad i=0,1,
\end{equation}
associating $X(i)$ to a solution $(X,\eta)$ may not
be surjective. With this notation we may give the precise formulation, and a proof 
(see subsection~\ref{s:proof2}, in particular Props.~\ref{twoone} and~\ref{twotwo}), of a result
announced in \cite{CF03} as Theorem~3.1 (implicitly assuming there that the $C_i$s 
were chosen so that the $p_i$s were surjective):
\begin{Thm}\label{t:two}
If $\pi$ is a Poisson bivector field,
then $\hagrid$  is coisotropic 
in $T^*PM$ if{f}\/ $\Ima p_0$ and\/ $\Ima p_1$ are coisotropic in $M$ relative to $C_0$ and $C_1$
respectively.
\end{Thm}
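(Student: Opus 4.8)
The plan is to turn the global statement ``$\hagrid$ is coisotropic'', i.e.\ $T^\perp\hagrid\subseteq T\hagrid$, into two \emph{pointwise} conditions at the endpoints, one at each $x\in\Ima p_0$ and one at each $x'\in\Ima p_1$. Since $\pi$ is assumed Poisson, the interior (bulk) part of the coisotropy condition is already the content of Theorem~\ref{t:one}, so the only genuinely new information must come from the behaviour at $t=0$ and $t=1$. Throughout I would use the explicit descriptions of the Zariski tangent bundle $T\hagrid$ and of the symplectic orthogonal bundle $T^\perp\hagrid$ obtained in Section~\ref{s:tangent}.

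First I would pin down $T^\perp\hagrid$. Fix $(X,\eta)\in\hagrid$ and set $x=X(0)$, $x'=X(1)$. A tangent vector is a pair $(\xi,\Hat\eta)$ solving the linearized equation $L(\xi,\Hat\eta)=0$ with $\xi(0)\in T_xC_0$, $\xi(1)\in T_{x'}C_1$; as $(\xi,\Hat\eta)$ ranges over $T\hagrid$, the endpoint values $\xi(0),\xi(1)$ sweep out exactly the Zariski tangent spaces $T_x\Ima p_0$ and $T_{x'}\Ima p_1$, these being by definition the images of $\dd p_0,\dd p_1$. For a gauge direction $(v,\Hat\mu)$ with $v=\pi^\sharp(X)\beta$ generated by a section $\beta\in\Omega^0(I,X^*T^*M)$ (with $\Hat\mu$ of the form recorded in Section~\ref{s:tangent}), a single integration by parts produces, for every $(\xi,\Hat\eta)$, an identity
\[
\Omega\big((v,\Hat\mu),(\xi,\Hat\eta)\big)=\int_I\braket{\beta}{L(\xi,\Hat\eta)}+\braket{\beta(0)}{\xi(0)}-\braket{\beta(1)}{\xi(1)}.
\]
On $T\hagrid$ the bulk integral vanishes, so such a gauge direction lies in $T^\perp\hagrid$ iff $\beta(0)\in\Ann(T_x\Ima p_0)$ and $\beta(1)\in\Ann(T_{x'}\Ima p_1)$, the annihilators taken in $T^*M$. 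Conversely, any element of $T^\perp\hagrid$ is orthogonal in particular to all tangent vectors that vanish at both endpoints; the bulk analysis of Section~\ref{s:tangent} (identical to that behind Theorem~\ref{t:one}) then forces it to be a gauge direction of the above form. Hence these boundary-constrained gauge directions exhaust $T^\perp\hagrid$.

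It remains to ask when each such direction also lies in $T\hagrid$. Its interior part automatically solves $L(\cdot)=0$ because $\pi$ is Poisson---this is the bulk content of Theorem~\ref{t:one}, and the only place the Jacobi identity enters. What survives are the two endpoint requirements $v(0)=\pi^\sharp(x)\beta(0)\in T_xC_0$ and $v(1)=\pi^\sharp(x')\beta(1)\in T_{x'}C_1$. Letting $\beta(0),\beta(1)$ range over the full annihilators, coisotropy of $\hagrid$ becomes equivalent to
\[
\pi^\sharp(x)\big(\Ann(T_x\Ima p_0)\big)\subseteq T_xC_0\ (\forall x\in\Ima p_0),\qquad \pi^\sharp(x')\big(\Ann(T_{x'}\Ima p_1)\big)\subseteq T_{x'}C_1\ (\forall x'\in\Ima p_1),
\]
which is precisely the statement that $\Ima p_0$ and $\Ima p_1$ are coisotropic in $M$ relative to $C_0$ and $C_1$. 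Separating this equivalence into its two implications yields Propositions~\ref{twoone} and~\ref{twotwo}. As a consistency check, when $C_0=C_1=M$ the right-hand sides become all of $T_xM$ and the endpoint conditions are vacuous, so Theorem~\ref{t:two} collapses to Theorem~\ref{t:one}.

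I expect the main obstacle to be analytic rather than algebraic. Because $\hagrid$ need not be smooth and $p_0,p_1$ need not be surjective, one must work with Zariski tangent spaces throughout and, crucially, justify both that the endpoint components of $T\hagrid$ genuinely fill $T_x\Ima p_0$ and $T_{x'}\Ima p_1$ and that $T^\perp\hagrid$ contains nothing beyond the boundary-constrained gauge directions. In the weak symplectic Banach setting the formal rule $(A\cap B)^\perp=A^\perp+B^\perp$ is not available, so the description of $T^\perp\hagrid$ has to be read off directly from the integration-by-parts identity together with Section~\ref{s:tangent}, rather than by manipulating orthogonal complements. This is also the step that repairs the surjectivity gap in \cite{CF03}: when $p_i$ is surjective one has $\Ima p_i=C_i$, and the relative condition reduces to ordinary coisotropy of $C_i$.
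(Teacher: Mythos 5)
Your overall architecture is the same as the paper's: describe $T^\perp\hagrid$ explicitly as ``gauge directions'' $\Tilde\xi=\pi^\sharp(X)\beta$, $\Tilde e=-D\beta$ with boundary conditions on $\beta$, and then reduce coisotropy of $\hagrid$ to two pointwise conditions at the endpoints. The gap is in the crucial step, the exact determination of $T^\perp\hagrid$ (the paper's Proposition~\ref{bazdeev}). Orthogonality of a gauge direction to all of $T\hagrid$ is, via your integration\ndash by\ndash parts identity, the \emph{single} condition that $(\beta(0),-\beta(1))$ annihilate the set $R$ of \emph{jointly} achievable endpoint pairs $(\xi(0),\xi(1))$. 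Since the two endpoints of a tangent vector are coupled through the linearized equation, $R$ is in general a proper subspace of $\Ima(\dd p_0)\times\Ima(\dd p_1)$ (for $\pi=0$, for instance, $R$ is a graph), so $\Ann(R)$ is strictly larger than $\Ann(\Ima \dd p_0)\times\Ann(\Ima\dd p_1)$ and your decoupled ``only if'' does not follow. Consequently your argument for Proposition~\ref{twoone} (relative coisotropy $\Rightarrow$ $T^\perp\hagrid\subset T\hagrid$) does not account for all of $T^\perp\hagrid$. The paper resolves exactly this point: testing first against vectors with $\xi(0)=0$ and $\xi(1)$ ranging over $T_{X(1)}C_1$, and invoking the linear\ndash algebra identity $\Ann(F^{-1}(T))=F^t(\Ann(T))$ with $F=\pi_1^\sharp$ (Lemma~\ref{l:F}) together with skew\ndash symmetry of $\pi$, one finds that the correct boundary conditions are $\beta(0)\in N^*_{X(0)}C_0$ and $\beta(1)\in N^*_{X(1)}C_1$ --- annihilators of $T_{X(i)}C_i$, not of the images of $\dd p_i$. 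This also explains why the theorem is stated with the conormal bundles of the $C_i$: your final condition $\pi^\sharp\bigl(\Ann(T_x\Ima p_i)\bigr)\subset T_xC_i$ happens to coincide with $\pi^\sharp(N^*_xC_i)\subset T_xC_i$, but only because $\Ann(T_x\Ima p_i)\subset N^*_xC_i+\ker\pi^\sharp_x$, an observation you would still owe (and which again rests on skew\ndash symmetry).

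Two smaller points. First, your claim that the bulk tests alone force an element of $T^\perp\hagrid$ to be a gauge direction is not quite right: compactly supported variations only give that $\Tilde\lambda-\pi_0^\sharp\int_\bullet^1\Tilde\phi$ is a constant $\Tilde\lambda_1$ (this is \eqref{e:patronus}); to put $\Tilde\lambda_1$ into the image of $\pi_0^\sharp$ you must also test against vectors with $\xi(0)=\xi(1)=0$ but $\int_I\phi\in\ker\pi_0^\sharp$ nonzero, and use $\Ann(\ker\pi_0^\sharp)=\Ima\pi_0^\sharp$. Second, ``$T_x\Ima p_i$'' is not well defined independently of the chosen $(X,\eta)$ over $x$, which is another reason the paper phrases both the boundary conditions in Proposition~\ref{bazdeev} and the conclusion of Proposition~\ref{twotwo} in terms of $N^*_{X(i)}C_i$. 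Once the correct description of $T^\perp\hagrid$ is in hand (including the verification, needed for Proposition~\ref{twotwo}, that the two endpoint values of $b$ can be prescribed independently by a suitable choice of $\Tilde\phi$), your concluding step and your consistency check against Theorem~\ref{t:one} do go through.
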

Recall that a submanifold $C$ of a finite dimensional manifold $M$ endowed with a $2$\ndash tensor field $\pi$
is called coisotropic if $\pi^\sharp(N^*C)\subset TC$, where $N^*C$ denotes the conormal bundle to $C$:
\[
N^*_xC:=\{\alpha\in T^*_xM : \braket\alpha v=0\ \forall v\in T_xC\},\quad \forall x\in C.
\]
If $S$ is a subset of a submanifold $C$ of a Poisson manifold $M$, 
we say that $S$ is coisotropic in $M$ relative to $C$ if $\pi^\sharp(N^*_xC)\subset T_xC$
for all $x\in S$. We consider the empty set as a coisotropic submanifold of any symplectic or Poisson
manifold.

Observe that having $\hagrid$ coisotropic may not be enough to conclude that $\pi$ is Poisson
as it is possible that the base paths contained in $\hagrid$ do not explore the whole of 
$M$.\footnote{For example, assume that $\pi$ has a zero at a point $x$. Then $\hagr {\{x\}}M$
consists of pairs $(X,\eta)$ where $X$ is the constant path at $x$ and there are no conditions on $\eta$.
It is then clear that $\hagr {\{x\}}M$ is coisotropic (actually Lagrangian) whatever the tensor $\pi$ is.}

By definition we may also
reformulate the if-part of Theorem~\ref{t:two} in the following (slightly weaker) form:
\begin{Cor}\label{cor:cor}
If  $\pi$ is Poisson and 
$C_0$ and $C_1$ are coisotropic,
then $\hagrid$ is coisotropic. 
\end{Cor}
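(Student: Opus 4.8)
The corollary follows almost immediately from the if-part of Theorem~\ref{t:two}, so the strategy is to reduce the claim to verifying the two hypotheses of that theorem. The if-part of Theorem~\ref{t:two} asserts that $\hagrid$ is coisotropic whenever $\Ima p_0$ and $\Ima p_1$ are coisotropic in $M$ relative to $C_0$ and $C_1$ respectively; since $\pi$ is assumed Poisson, this implication is available to us. Thus the plan is: given that $\pi$ is Poisson and that $C_0,C_1$ are coisotropic submanifolds of $M$, deduce that $\Ima p_0$ is coisotropic in $M$ relative to $C_0$, and likewise for $p_1$, and then invoke Theorem~\ref{t:two}.

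\textbf{The key reduction.} First I would recall the definitions. A submanifold $C$ is coisotropic (in the finite-dimensional sense) when $\pi^\sharp(N^*_xC)\subset T_xC$ for \emph{all} $x\in C$. A subset $S\subset C$ is coisotropic in $M$ relative to $C$ when the weaker pointwise condition $\pi^\sharp(N^*_xC)\subset T_xC$ holds merely for all $x\in S$. The crucial observation is that $\Ima p_0$ is by definition a subset of $C_0$, since $p_0$ sends a solution $(X,\eta)$ to $X(0)\in C_0$. Therefore, if $C_0$ is coisotropic, the inclusion $\pi^\sharp(N^*_xC_0)\subset T_xC_0$ holds at every point of $C_0$, and in particular at every point of the subset $\Ima p_0\subset C_0$. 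This is exactly the statement that $\Ima p_0$ is coisotropic in $M$ relative to $C_0$. The identical argument applied to $p_1$ and $C_1$ gives that $\Ima p_1$ is coisotropic in $M$ relative to $C_1$.

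\textbf{Concluding.} With both relative-coisotropicity conditions established and $\pi$ Poisson by hypothesis, the if-part of Theorem~\ref{t:two} applies directly and yields that $\hagrid$ is coisotropic in $T^*PM$, which is the assertion of the corollary. I would also remark explicitly why the corollary is genuinely weaker than Theorem~\ref{t:two}: coisotropy of $C_0$ and $C_1$ as full submanifolds is a stronger demand than the relative coisotropy of the (possibly proper, and possibly non-smooth) subsets $\Ima p_0$ and $\Ima p_1$ that the theorem actually requires. The edge case $\Ima p_i=\emptyset$ causes no trouble, since the empty set is declared coisotropic relative to any submanifold.

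\textbf{Anticipated obstacle.} I do not expect a substantive obstacle in this direction, as the entire content is the passage from a universally-quantified pointwise condition on $C_i$ to the same condition restricted to the subset $\Ima p_i$. The only point requiring mild care is purely definitional bookkeeping: one must confirm that $\Ima p_i\subset C_i$ (immediate from the definition of $p_i$ in~\eqref{e:p}) so that the conormal bundle $N^*C_i$ and tangent bundle $TC_i$ appearing in the relative definition are the same objects as in the coisotropy of $C_i$. Because Theorem~\ref{t:two} does the real work, the corollary is essentially a restatement under a convenient, more easily checked hypothesis.
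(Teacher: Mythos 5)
Your proposal is correct and is essentially the paper's own derivation: the paper introduces the corollary with the words ``by definition we may also reformulate the if-part of Theorem~\ref{t:two} in the following (slightly weaker) form,'' which is precisely your observation that coisotropy of $C_i$ gives the relative coisotropy of the subset $\Ima p_i\subset C_i$ required by the theorem. (The underlying content, Proposition~\ref{twoone}, is in fact stated in the paper with exactly the corollary's hypotheses, so nothing further is needed.)
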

In this case, as recalled above, $T^\perp\hagrid$ is an integrable distribution on $\hagrid$,
which we describe in details in subsubsection~\ref{s:sym}.

In Section~\ref{s:dual} we concentrate on the case when $C_0$ and $C_1$ are coisotropic submanifolds of a Poisson
manifold $M$ and discuss how the reduction of $\hagrid$ may be understood as a (singular) dual pair.

Finally in Section~\ref{s:fd} we show that  this reduction  
may also be recovered
by Poisson reduction of the coisotropic submanifold of the symplectic groupoid of $M$ determined by
the intersection of the preimages of $C_0$ and $C_1$ under the source and target maps, respectively.




\begin{Rem}
One may consider the weaker condition that $\hagrid$ be presymplectic (i.e., that the kernel of the restriction of the symplectic form is a subbundle of the tangent
bundle of $\hagrid$). As shown in \cite{CalFal}, a sufficient condition for this to happen is that $C_0$ and $C_1$ are pre-Poisson submanifolds of $M$
(according to the definition in \cite{CZ}).
We will not elaborate on this in this paper.
\end{Rem}

\subsection{The case of the circle}\label{ss:caci}
In this paper we mainly work on the path space $PM$ as it is interesting to have boundary components (to be associated to the submanifolds $C_0$ and $C_1$).
A very similar, actually a bit easier, story works in the case of the loop space $LM:=C^1(S^1,M)$. In this case, we can analogously define the weak symplectic Banach manifold $T^*LM$ as the space
of bundle maps $TS^1\to T^*M$ with continuously differentiable base map and continuous fiber map. The weak symplectic form $\Omega$ is again the differential of the canonical $1$\ndash form $\Theta$:
\[
\Theta(X,\eta)(\Hat\xi)=\int_{S^1} \braket\eta\xi,
\]
where we use the same notations as above. To the tensor $\pi$ we now associate the Banach submanifold
\[
\calC_\pi(M):=
\{(X,\eta)\in T^*LM :
\dd X + \pi^\sharp(X)\eta = 0
\}.
\]
A very similar proof to Theorem~\ref{t:one}, see subsection~\ref{s:proof3}, yields the following
\begin{Thm}\label{t:onecircle}
$\calC_\pi(M)$ is coisotropic 
in $T^*LM$ if{f}\/ $\pi$ is a Poisson bivector field.
\end{Thm}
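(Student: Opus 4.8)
The plan is to follow the proof of Theorem~\ref{t:one} (Propositions~\ref{oneone} and~\ref{onetwo}) essentially verbatim; the only difference, and it is a simplification, is that $S^1$ carries no boundary. First I would compute the Zariski tangent space to $\fluffy$ at a solution $(X,\eta)$. Writing the constraint as $F(X,\eta)=\dd X+\pi^\sharp(X)\eta$, a pair $(\xi,\beta)$ with $\xi\in\Gamma(X^*TM)$ and $\beta$ a $1$\ndash form valued in $X^*T^*M$ lies in $T_{(X,\eta)}\fluffy$ precisely when it solves the linearized equation, which in local coordinates reads
\[
\dd\xi^i+\de_k\pi^{ij}(X)\,\eta_j\,\xi^k+\pi^{ij}(X)\,\beta_j=0.
\]
One may instead phrase this through the covariant differential of an auxiliary torsion\ndash free connection; since the final answer is manifestly tensorial, the choice is irrelevant.

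Next I would identify the symplectic orthogonal bundle. With
\[
\Omega\big((\xi_1,\beta_1),(\xi_2,\beta_2)\big)=\int_{S^1}\big(\braket{\beta_1}{\xi_2}-\braket{\beta_2}{\xi_1}\big),
\]
the space $T^\perp_{(X,\eta)}\fluffy$ is spanned by the Hamiltonian vector fields $v_\lambda$ of the smeared constraints $\int_{S^1}\braket{\lambda}{F}$, the multiplier $\lambda$ being a $0$\ndash form valued in $X^*T^*M$. This is the step where the circle pays off: integrating $\int_{S^1}\braket{\lambda}{\dd\xi}$ by parts yields $-\int_{S^1}\braket{\dd\lambda}{\xi}$ with no boundary term, so $\lambda$ is entirely unconstrained and
\[
v_\lambda=(\xi,\beta),\qquad \xi^i=\pi^{ij}(X)\,\lambda_j,\qquad \beta_i=-\dd\lambda_i+\de_i\pi^{kj}(X)\,\lambda_k\,\eta_j.
\]
In the path case the same integration by parts produces a boundary pairing $\braket{\lambda}{\xi}$ at the endpoints, whose control is exactly what forces the boundary data into $C_0$ and $C_1$; on $S^1$ this disappears.

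Coisotropy means $v_\lambda\in T_{(X,\eta)}\fluffy$ for every $\lambda$. I would substitute the two components of $v_\lambda$ into the linearized equation and, crucially, use the constraint $\dd X^i=-\pi^{ij}\eta_j$ to remove $\dd X$. The $\pi^{ij}\dd\lambda_j$ terms cancel, and after relabelling the survivors assemble into the cyclic combination defining the Schouten--Nijenhuis bracket:
\[
\dd\xi^i+\de_k\pi^{ij}\,\eta_j\,\xi^k+\pi^{ij}\beta_j=\Lie\pi\pi^{\,iab}\,\lambda_a\,\eta_b
\]
(up to the conventional numerical factor). Hence $\fluffy$ is coisotropic if and only if $\Lie\pi\pi^{\,iab}\lambda_a\eta_b$ vanishes along every solution for every multiplier; as $\lambda$ is free this says $\Lie\pi\pi^{\,iab}\eta_b=0$ pointwise along all solutions. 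The if\ndash part is then immediate. For the only\ndash if part I would localize: given $x\in M$ and any $\gamma\in T^*_xM$, solve $\dd c=-\pi^\sharp(c)\,\eta_c$ on a short interval with $\eta_c$ an arbitrary extension of $\gamma$, and reparametrize this arc by an oscillating map $\psi\colon S^1\to\bbR$ whose derivative vanishes only at the turning points; then $X=c\circ\psi$, $\eta=\psi'\,(\eta_c\circ\psi)$ is a genuine closed solution in $\fluffy$ whose value of $\eta$ over $x$ is a nonzero multiple of $\gamma$. Since $\gamma$ is arbitrary, $\Lie\pi\pi(x)=0$; at zeros of $\pi$ the bracket vanishes anyway. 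Thus $\pi$ is Poisson.

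The main obstacle is the tensorial bookkeeping in the third step: checking that every connection\ndash or coordinate\ndash dependent term (those in $\de\pi$ and in $\dd\lambda$) cancels and that what remains is exactly the cyclic sum $\Lie\pi\pi$. This is the computation already done for Theorem~\ref{t:one}, and the circle version is genuinely easier because the absence of a boundary removes the boundary\ndash condition analysis that there brings in $C_0$ and $C_1$. The only ingredient with no analogue in the path case is the oscillating reparametrization used to manufacture closed solutions carrying prescribed cotangent data, and this is elementary.
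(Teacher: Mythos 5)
Your ``only if'' direction is essentially fine: the Hamiltonian vector fields $v_\lambda$ of the smeared constraints always lie in $T^\perp_{(X,\eta)}\fluffy$, so coisotropy forces them into the tangent space, and your oscillating reparametrization is a legitimate device for producing closed solutions through an arbitrary point carrying prescribed covector data (it plays the role of Lemma~\ref{l:Ppi}). One omission there: the hypothesis is only that $\pi$ is a section of $TM\otimes TM$, so skew-symmetry is part of what must be \emph{proved}; by writing the linearization defect as a Schouten bracket from the outset you have tacitly assumed it. The paper extracts skewness from the condition $((P^\sharp)^t+P^\sharp)\Tilde\phi=0$ before invoking the bracket identity, and your argument needs an analogous step.

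The genuine gap is in the ``if'' direction. You assert that $T^\perp_{(X,\eta)}\fluffy$ is \emph{spanned} by the $v_\lambda$; your integration by parts only shows the inclusion $\mathrm{span}\{v_\lambda\}\subset T^\perp_{(X,\eta)}\fluffy$, and checking $v_\lambda\in T_{(X,\eta)}\fluffy$ therefore proves nothing about the possibly larger space $T^\perp_{(X,\eta)}\fluffy$. In a weak symplectic Banach manifold this reverse inclusion is exactly the non-automatic step: the paper flags it explicitly in the Remark closing subsubsection~\ref{s:symc} (``it is not automatic that the Hamiltonian vector fields of functions in the vanishing ideal span the whole characteristic distribution''), and proving that every element of $T^\perp\fluffy$ has the form \eqref{e:betasymmcircle} occupies the whole Proposition of that subsubsection, with a nontrivial analysis involving $\ker\pi_0^\sharp$, the holonomy operator $G=U(1)-\id$, and annihilators of their images. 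The correct route for coisotropy, taken in Prop.~\ref{onecone}, runs in the opposite direction: start from an \emph{arbitrary} pair $(\Tilde\lambda,\Tilde\phi)$ annihilated by the symplectic form against all tangent vectors, test against the subfamily with $\lambda_0=0$ and $\int_I\phi=0$, and deduce directly that it satisfies the linearized equation $\dd\bigl(\Tilde\lambda-\pi_0^\sharp\int_\bullet^1\Tilde\phi\bigr)=0$, hence lies in the tangent space. As written, your ``if'' direction does not establish $T^\perp\fluffy\subset T\fluffy$.
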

In this case, as recalled above, $T^\perp\calC_\pi(M)$ is an integrable distribution on $\calC_\pi(M)$,
which we describe in details in subsubsection~\ref{s:symc}.

\subsection{Lagrangian field theories with boundary}\label{ss:LFT}
In \cite{CMR} and \cite{CMR2} the general notion of Lagrangian field theories on manifolds with boundary is studied. The symplectic manifolds $T^*PM$ and $T^*LM$ described above arise as spaces of boundary fields of a two\ndash dimensional Lagrangian field theory with $\hagrid$ and $\fluffy$ as its ``spaces of Cauchy data."
The requirement of $\pi$ being Poisson turns out to be equivalent to the requirement that the theory is ``good" in the sense that the the evolution relations determined by solutions to the Euler--Lagrange equations are (immersed) Lagrangian submanifolds. We discuss this in more details in Section~\ref{s:LFT}.

\begin{Ack}
I thank I. Contreras for useful discussions and comments.
\end{Ack}

\section{Tangent and orthogonal bundles}\label{s:tangent}
By choosing a linear connection on $M$, which for simplicity we assume to be torsion free, 
we may identify the tangent bundle to $T^*M$ with the vector bundle
$E:=T^*M\oplus TM\oplus T^*M$. Explicitly this is done as follows. 
First observe that both $TT^*M$ and $E$
can be regarded as vector bundles  over $T^*M$ with fiber at point $(x,p)$
given by the vector space $T_xM\oplus T^*_xM$; the transition functions in the two vector
bundles are however different.
Choosing on $M$ local coordinates $\{x^i\}_{i=1,\dots,m}$, $m=\dim M$, 
and the corresponding dual coordinates $\{p_i\}$ on $T^*_xM$, 
we consider the fiber isomorphism 
\[
\Phi_{(x,p)}\colon
\begin{array}[t]{ccc}
T_{(x,p)}T^*M=T_xM\oplus T^*_xM &\to & E_{(x,p)}=T_xM\oplus T^*_xM\\
(\dot x^i,\dot p_i) &\mapsto & (\dot x^i, \dot p_i - \Gamma^r_{si}(x)\,p_r\,\dot x^s)
\end{array}
\]
where the $\Gamma$s are the Christoffel symbols of the given connection,
and we use Einstein's convention that a sum over upper and lower repeated indices is understood.
Then $\Phi\colon TT^*M\to E$ is a vector bundle isomorphism.

By this we may also identify $TT^*PM$ with $T^*PM\oplus TPM\oplus T^*PM$, 
regarded as vector bundles over $T^*PM$. Recall that
$T_XPM=\Gamma(X^*TM)$ and $T^*_XPM:=\Gamma(T^*I\otimes X^*T^*M)$.\footnote{To avoid cumbersome notations, 
from now on we will avoid indicating which maps or forms are continuous or differentiable.}
To describe this isomorphism 
explicitly, we observe that, given a continuous path $X$, we may subdivide the interval
$I$ into finitely many subintervals $I_\alpha$ such that $X(I_\alpha)$ is contained in a coordinate
patch $\forall\alpha$.  For a given $I_\alpha$,
we denote by $X^i$ and $\eta_i$, $i=1,\dots,m:=\dim M$, the components in local coordinates
of the restrictions to $I_\alpha$ of $X$ and $\eta$. The restriction to $I_\alpha$
of a tangent vector $\Hat\xi$ can then be split correspondingly 
into its components $\xi^i$ in the $X^i$\ndash direction
and $\zeta_i$ in the $\eta_i$\ndash direction. 
We then define
\begin{equation}\label{e:e}
e_i = \zeta_i - \Gamma^r_{si}(X)\,\eta_r\,\xi^s.
\end{equation}
The map $(X^i,\eta_i,\xi^i,\zeta_i)\mapsto(X^i,\eta_i,\xi^i,e_i)$ is well-defined globally
and yields the required vector bundle isomorphism $TT^*PM\to T^*PM\oplus TPM\oplus T^*PM$.
If we now impose boundary conditions, by the same map we may finally identify the fiber at $(X,\eta)\in\tpm$ with
\[
\{\xi\oplus e\in \Gamma(X^*TM) \oplus \Gamma(T^*I\otimes X^*T^*M) :
\xi(0)\in T_0,\ \xi(1)\in T_1\},
\]
with 
\[
T_0:=T_{X(0)}C_0\text{\/ and }T_1:=T_{X(1)}C_1.
\]

The linear connection on $M$ also induces a connection on the vector bundle $X^*TM$.
We denote by $\partial\colon \Gamma(X^*TM)\to\Gamma(T^*I\otimes X^*TM)$ 
the corresponding covariant exterior derivative. In local coordinates, its action
on a section $\sigma$ of $X^*TM$ is given by
\begin{equation}\label{e:DD}
(\partial\sigma)^i=\dd\sigma^i+\Gamma^i_{rs}\,\dd X^r\,\sigma^s.
\end{equation}
It is convenient to
modify this connection by using
\begin{equation}\label{e:AA}
A:=\nabla\pi^\sharp(X)\eta\in\Gamma(T^*I\otimes X^*\End(TM)),
\end{equation}
where $\nabla$ denotes the covariant derivative. In local coordinates we have
\begin{equation}\label{e:nablapi}
(\nabla\pi)^{ij}_k = \de_k\pi^{ij}
+\Gamma^i_{kr}\pi^{rj}+\Gamma^j_{kr}\pi^{ir}
\end{equation}
and
\begin{equation}\label{e:A}
A^i_k=\eta_j(\nabla\pi)_k^{ji}(X).
\end{equation}
We will denote by $D$ the covariant exterior derivative $\partial+A$.

\subsection{The tangent spaces to compatible paths}\label{ss:ts}
Let $(X,\eta)$ be a point in $\hagrid$. 
\begin{Prop}\label{p:conn} 
After choosing a connection on $M$ and using the above notations, we have
\begin{multline*} 
T_{(X,\eta)}\hagrid = \{\xi\in\Gamma(X^*TM),\ e\in\Gamma(T^*I\otimes X^*T^*M) : \\
(D\xi)  +  \pi^\sharp(X)e =0,\ 
\xi(0)\in T_0,\ \xi(1)\in T_1\}.
\end{multline*}
\end{Prop}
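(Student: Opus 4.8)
The plan is to exhibit $\hagrid$ as the zero locus of the constraint section
\[
F(X,\eta):=\dd X+\pi^\sharp(X)\eta ,
\]
viewed as a section of the (infinite\ndash dimensional) vector bundle $\calE\to\tpm$ whose fibre over $(X,\eta)$ is $\Gamma(T^*I\otimes X^*TM)$. By the definition of the Zariski tangent space recalled in the introduction, $T_{(X,\eta)}\hagrid$ is the set of vectors in $T_{(X,\eta)}\tpm$ annihilated by the linearization of $F$ at the zero $(X,\eta)$. Two inputs come for free: first, the linearization of a section at one of its zeros is intrinsically defined, so the answer cannot depend on the chosen connection; second, $T_{(X,\eta)}\tpm$ has already been identified through $\Phi$ with the pairs $\xi\oplus e$ subject only to $\xi(0)\in T_0$ and $\xi(1)\in T_1$. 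Hence the whole proposition reduces to the single identity ``linearized constraint $=(D\xi)+\pi^\sharp(X)e$''.

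To establish that identity I would differentiate $F$ along a path $(X_t,\eta_t)$ through $(X,\eta)$, taking the covariant derivative $\tfrac{D}{\dd t}$ of the section $F(X_t,\eta_t)$ induced by the connection on $X^*TM$; at a zero this agrees with the naive derivative. The differential $\dd X$ contributes
\[
\tfrac{D}{\dd t}\,\dd X_t\big\vert_{t=0}=\partial\xi ,
\]
which is precisely the symmetry lemma $\tfrac{D}{\dd t}\tfrac{\partial X}{\partial s}=\tfrac{D}{\dd s}\tfrac{\partial X}{\partial t}$; this is the one and only place where torsion\ndash freeness of the connection is invoked. The term $\pi^\sharp(X)\eta$ contributes, by the Leibniz rule,
\[
(\nabla_\xi\pi^\sharp)(X)\,\eta+\pi^\sharp(X)\,\tfrac{D}{\dd t}\eta_t\big\vert_{t=0}.
\]
Here $\Phi$ was tailored so that the vertical part $e$ of the variation equals the covariant variation $\tfrac{D}{\dd t}\eta_t\big\vert_{t=0}$, while comparing components against \eqref{e:nablapi}--\eqref{e:A} shows $(\nabla_\xi\pi^\sharp)(X)\eta=A\xi$. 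Adding the two contributions gives $\partial\xi+A\xi+\pi^\sharp(X)e=(D\xi)+\pi^\sharp(X)e$, so the linearized equation is $(D\xi)+\pi^\sharp(X)e=0$, which together with the boundary conditions is the assertion.

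As a cross\ndash check one can run the same computation purely in the local coordinates $(X^i,\eta_i,\xi^i,\zeta_i)$ of this section. The naive variation of $F^i=\dd X^i+\pi^{ji}(X)\eta_j$ is $\dd\xi^i+\partial_k\pi^{ji}\,\xi^k\eta_j+\pi^{ji}\zeta_j$; substituting $e_i=\zeta_i-\Gamma^r_{si}\eta_r\xi^s$, the expression \eqref{e:nablapi} for $(\nabla\pi)^{ji}_k$, and the defining relation $\dd X^r=-\pi^{jr}\eta_j$ into $(D\xi)^i+\pi^{ji}e_j$, every Christoffel contribution cancels in pairs, using only $\Gamma^i_{rk}=\Gamma^i_{kr}$. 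I expect the sole genuine obstacle to lie in exactly this bookkeeping: the separate pieces $\dd\xi$, $\zeta$ and $\partial_k\pi$ are not tensorial, and the content of the proposition is that the connection\ndash dependent terms reassemble into the globally defined operator $D=\partial+A$ and the term $\pi^\sharp(X)e$. Once the variation is organised covariantly the cancellation is forced by torsion\ndash freeness (together with the constraint in the naive form), and the boundary data for $\xi$ are inherited unchanged from $T_{(X,\eta)}\tpm$.
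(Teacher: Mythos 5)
Your proof is correct and follows essentially the same route as the paper: linearize the constraint $\dd X+\pi^\sharp(X)\eta=0$ and regroup the non-tensorial pieces, using \eqref{e:e}, \eqref{e:DD}--\eqref{e:A}, the constraint \eqref{e:dX} itself and torsion-freeness, into $(D\xi)+\pi^\sharp(X)e$. Your local-coordinate ``cross-check'' is in fact verbatim the paper's argument; the covariant first half is just a pleasant invariant gloss on the same computation.
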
 
\begin{proof}
Let us restrict our attention to a subinterval $I_\alpha$ such that $X(I_\alpha)$ is contained
in a coordinate patch.
The restriction to $I_\alpha$ of the equation
satisfied by $X$ and $\eta$ then reads
\begin{equation}\label{e:dX}
\dd X^i + \eta_j\,\pi^{ji}(X)=0
\end{equation}
in local coordinates. Let $\xi^i$ and $\zeta_i$ denote the local-coordinate expression of
a tangent vector. They then satisfy the equation
\[
\dd \xi^i +\eta_j\,\de_l\pi^{ji}(X)\,\xi^l  +  \zeta_j\,\pi^{ji}(X) =0,
\]
or equivalently by \eqref{e:nablapi}
\[
(\dd \xi^i -\xi^l\,\Gamma^i_{lr}(X)\,\eta_j\,\pi^{jr})
+\eta_j\,(\nabla\pi)_l^{ji}(X)\,\xi^l  +  
(\zeta_j-\Gamma^s_{lj}(X)\,\eta_s\,\xi^l)\,\pi^{ji}(X) =0.
\]
Since the connection is torsion free,
by \eqref{e:dX}, \eqref{e:e}, \eqref{e:DD}, \eqref{e:AA} and \eqref{e:A}
we conclude the proof.
\end{proof}

We then consider the parallel transport $U\in\Gamma(\Iso(X^*TM,T_{X(0)}M))$ 
of the connection $D$, viz., the solution
to the Cauchy problem
\begin{equation}\label{e:UU}
\left\{
\begin{aligned}
\dd\circ U &= U\circ D,\\
U(0)&= \mathit{Id}. 
\end{aligned}
\right.
\end{equation}
In local coordinates, we may also write
\begin{equation}\label{e:U}
\left\{
\begin{aligned}
\dd U^i_j &= U^i_l (A^l_j+\Gamma^l_{sj}\,\dd X^s),\\
U(0)^i_j &= \delta^i_j. 
\end{aligned}
\right.
\end{equation}
We may then simplify the equation
satisfied by the tangent vector $(\xi,e)$ into
\begin{equation}\label{e:dlambda}
\dd\lambda + P^\sharp\phi=0,
\end{equation}
with
\begin{subequations}\label{e:U*}
\begin{align}
\lambda &:= U\xi \in\Ozero,\label{Ua}\\
\phi &:= (U^t)^{-1}e\in\Oone,\label{Ub}\\
P^\sharp &:= U\pi^\sharp U^t\in\Omega^0(I,\Hom(T_{X(0)}^*M,T_{X(0)}M)).\label{Uc}
\end{align}
\end{subequations}
So we get
\begin{multline}\label{e:Ttwisted}
T_{(X,\eta)}\hagrid \cong
T_{(X,\eta)}\hagrid^\text{twisted} :=\\
\{\lambda\in\Ozero,\ \phi\in \Oone : 
\dd\lambda +P^\sharp\phi=0,\\ 
\lambda(0)\in T_0,\ U(1)^{-1}\lambda(1)\in T_1\}.
\end{multline}
Equation \eqref{e:dlambda} may be easily solved for any $\phi$ just assigning the initial
condition $\lambda(0)=\lambda_0$:
\begin{equation}\label{e:lambda}
\lambda(u)=\lambda_0-\int_0^u P^\sharp\phi.
\end{equation}
Se we get the alternative description
\begin{multline*}
T_{(X,\eta)}\hagrid \cong
T_{(X,\eta)}\hagrid_0 :=\\
\left\{\lambda_0\in T_0,\ \phi\in \Oone : 
U(1)^{-1}\left(\lambda_0-\int_I P^\sharp\phi\right)\in T_1\right\}.
\end{multline*}

\subsubsection{Properties of $P$}\label{p:P}
The tensor $\pi$ has been replaced by $P$ in \eqref{Uc}. Just by differentianting, 
it is not difficult to see that $P^\sharp$ is the solution to the Cauchy problem
\[
\left\{
\begin{aligned}
\dd P^\sharp &= U T^\sharp U^t,\\
P^\sharp(0) &= \pi_0^\sharp, 
\end{aligned}
\right.
\]
with $\pi_0:=\pi(X(0))$ and $T^\sharp:=D\pi^\sharp$.
Using \eqref{e:dX}, \eqref{e:A} and \eqref{e:U}, we obtain
in local coordinates
\[
T^{ls}=\eta_k\,(\pi^{rs}(X)\,(\nabla\pi)_r^{kl}(X)
-\pi^{kr}(X)\,(\nabla\pi)_r^{ls}(X)
+\pi^{lr}(X)\,(\nabla\pi)_r^{ks}(X)).
\]
Recall that, in local coordinates, 
the vanishing of the Schouten--Nijenhuis bracket of a bivector field $\pi$ may also be written,
by using any connection, as
\[
\pi^{sr}\,(\nabla\pi)_r^{lk}
+\pi^{kr}\,(\nabla\pi)_r^{sl}
+\pi^{lr}\,(\nabla\pi)_r^{ks}=0.
\]
This immediately implies the following
\begin{Lem}\label{l:piP}
If $\pi$ is a Poisson bivector field, then $P=\pi_0$.
\end{Lem}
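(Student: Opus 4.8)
The plan is to reduce the statement to the pointwise claim that the source term $T^\sharp = D\pi^\sharp$ of the Cauchy problem for $P^\sharp$ vanishes identically along $X$. Indeed, the Cauchy problem displayed just above reads $\dd P^\sharp = U\,T^\sharp\,U^t$ with $P^\sharp(0)=\pi_0^\sharp$; if $T^\sharp\equiv 0$ then $\dd P^\sharp\equiv 0$, so $P^\sharp$ is constant and equal to its initial value $\pi_0^\sharp$. Since $P$ is the bivector determined by $P^\sharp$, this is exactly $P=\pi_0$. In particular no parallel-transport estimate or integration is needed; everything comes down to showing $T^\sharp=0$.

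To establish $T^\sharp=0$ I would work in the local coordinates already fixed and use the explicit formula
\[
T^{ls} = \eta_k\,\bigl(\pi^{rs}(\nabla\pi)_r^{kl} - \pi^{kr}(\nabla\pi)_r^{ls} + \pi^{lr}(\nabla\pi)_r^{ks}\bigr),
\]
all quantities evaluated at $X$. Since $\eta_k$ is arbitrary, it suffices to show that the bracketed coefficient vanishes for each fixed triple $(k,l,s)$. The idea is to transport this coefficient into the Schouten--Nijenhuis expression
\[
\pi^{sr}(\nabla\pi)_r^{lk} + \pi^{kr}(\nabla\pi)_r^{sl} + \pi^{lr}(\nabla\pi)_r^{ks},
\]
which vanishes by the hypothesis $\Lie\pi\pi=0$. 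This is where antisymmetry enters: because $\pi$ is a bivector, its covariant derivative $\nabla\pi$ is again antisymmetric in its two contravariant indices, so $\pi^{rs}=-\pi^{sr}$ and $(\nabla\pi)_r^{kl}=-(\nabla\pi)_r^{lk}$, and similarly for the other factors.

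Concretely, applying these antisymmetries termwise turns the first summand $\pi^{rs}(\nabla\pi)_r^{kl}$ into $\pi^{sr}(\nabla\pi)_r^{lk}$ (two sign flips cancelling), the second summand $-\pi^{kr}(\nabla\pi)_r^{ls}$ into $\pi^{kr}(\nabla\pi)_r^{sl}$, while the third summand $\pi^{lr}(\nabla\pi)_r^{ks}$ is already in the desired shape. The bracketed coefficient is thereby identified with the Schouten--Nijenhuis expression above, hence vanishes, giving $T^{ls}=0$ and so $T^\sharp=0$.

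The only genuine obstacle is the index bookkeeping in this last step: one must track the placements and signs carefully and confirm that $\nabla\pi$ really does inherit the antisymmetry of $\pi$, which holds because the connection acts as a derivation on tensors and therefore commutes with antisymmetrization in the $\pi$-indices. Once this is verified the identity is purely algebraic and pointwise, and the conclusion $P=\pi_0$ follows at once from the first paragraph.
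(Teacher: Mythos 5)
Your proof is correct and follows essentially the same route as the paper: the paper likewise records the Cauchy problem $\dd P^\sharp = U T^\sharp U^t$, the local formula for $T^{ls}$, and the connection form of $\Lie\pi\pi=0$, and then asserts the implication you spell out. Your only addition is making explicit the sign bookkeeping (antisymmetry of $\pi$ and of $\nabla\pi$ in its contravariant indices) that identifies the bracketed coefficient with the Schouten--Nijenhuis expression, which is exactly the step the paper leaves to the reader.
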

Observe that $P$ depends on the chosen $(X,\eta)$. We also have the following
\begin{Lem}\label{l:Ppi}
If $P$ is skew-symmetric and constant for all $(X,\eta)\in\hagr MM$,
then $\pi$ is a Poisson bivector field.
\end{Lem}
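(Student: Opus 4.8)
The plan is to extract the coordinate form of $\Lie\pi\pi=0$ directly from the Cauchy problem for $P^\sharp$ recorded in \ref{p:P}, namely $\dd P^\sharp = U\,T^\sharp U^t$ with $U$ an isomorphism. Constancy of $P$ along $I$ gives $\dd P^\sharp=0$, and since $U$ and $U^t$ are invertible this forces $T^\sharp=0$ identically along every compatible path. Specializing to the endpoint $t=0$ and inserting the local expression for $T^{ls}$, I obtain
\begin{equation}\label{e:vanish0}
\eta_k(0)\,\bigl(\pi^{rs}\,(\nabla\pi)_r^{kl}-\pi^{kr}\,(\nabla\pi)_r^{ls}+\pi^{lr}\,(\nabla\pi)_r^{ks}\bigr)\big|_{X(0)}=0
\end{equation}
for every $(X,\eta)\in\hagr MM$.

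Next I would observe that, because here $C_0=C_1=M$, the endpoints are unconstrained and the compatibility equation $\dd X=-\pi^\sharp(X)\eta$ is a first-order ODE that can be solved for $X$ once an arbitrary initial point $X(0)=x$ and an arbitrary $\eta$ are prescribed. Hence $(X(0),\eta(0))$ runs over all of $T^*M$ as $(X,\eta)$ runs over $\hagr MM$. Fixing $x\in M$ and letting $\eta(0)$ be an arbitrary covector at $x$, the linearity of \eqref{e:vanish0} in $\eta(0)$ yields, at every point $x$ and for all indices $k,l,s$,
\begin{equation}\label{e:vanish1}
\pi^{rs}\,(\nabla\pi)_r^{kl}-\pi^{kr}\,(\nabla\pi)_r^{ls}+\pi^{lr}\,(\nabla\pi)_r^{ks}=0.
\end{equation}

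It remains to identify \eqref{e:vanish1} with the coordinate form of $\Lie\pi\pi=0$, and this is where skew-symmetry enters. Since $P^\sharp(0)=\pi_0^\sharp=\pi(X(0))^\sharp$, requiring $P$ skew-symmetric for all paths makes $\pi(x)$ skew at every $x$, so $\pi$ is a genuine bivector field; consequently $\nabla\pi$ is skew in its contravariant indices, $(\nabla\pi)_r^{lk}=-(\nabla\pi)_r^{kl}$. A short manipulation using these two skew-symmetries rewrites each summand of the Schouten--Nijenhuis expression $\pi^{sr}(\nabla\pi)_r^{lk}+\pi^{kr}(\nabla\pi)_r^{sl}+\pi^{lr}(\nabla\pi)_r^{ks}$ as the corresponding summand of \eqref{e:vanish1}, so that \eqref{e:vanish1} is precisely the vanishing of $\Lie\pi\pi$ and $\pi$ is Poisson.

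The one step that deserves care is the passage from \eqref{e:vanish0} to \eqref{e:vanish1}: it hinges on compatible paths over all of $M$ realizing every initial datum $(X(0),\eta(0))\in T^*M$. This is exactly the feature that fails when $C_0$ and $C_1$ are too small (compare the footnote example at a zero of $\pi$, where $\hagrid$ is coisotropic regardless of $\pi$), and it is what makes the hypothesis $C_0=C_1=M$ indispensable; the remaining index bookkeeping in the final step is routine.
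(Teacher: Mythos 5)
Your proof is correct and takes essentially the same route as the paper's: both deduce $T^\sharp=0$ from $\dd P^\sharp=0$ and the invertibility of $U$, then realize an arbitrary covector at an arbitrary point of $M$ by solving the compatibility ODE, so that linearity of $T$ in $\eta$ forces each coefficient of the Schouten--Nijenhuis expression to vanish, with skew-symmetry of $\pi_0=P(0)$ supplying the bivector property. The only cosmetic difference is that the paper evaluates at an interior time $u_0$ with basis covectors $\be_j$ rather than at $u=0$ with an arbitrary covector.
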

\begin{proof}
If $P$ is skew symmetric, then so is $\pi_0$, that is, $\pi$ at any possible starting point of a path
$X$. Thus, $\pi$ is a bivector field. Moreover, for any $x\in M$, we can choose a solution to
\eqref{e:dX} with $X$ passing through $x$ for some $u_0\in I$
and, in a neighborhood of $u_0$,
$\eta_j=\be_j\dd u$, with $\be_j$ a basis element of $(\bbR^m)^*$. This implies that $T$ vanishes at $u_0$ and hence that
\[
\pi^{rs}(x)\,(\nabla\pi)_r^{kl}(x)
-\pi^{kr}(x)\,(\nabla\pi)_r^{ls}(x)
+\pi^{lr}(x)\,(\nabla\pi)_r^{ks}(x) = 0.
\]
Since this holds for all $x\in M$, it follows that $\pi$ is Poisson.
\end{proof}

\subsection{The symplectic orthogonal spaces to compatible paths}\label{s:sos}
Assuming that the chosen connection is torsion-free,
the symplectic form $\Omega$ evaluated at
tangent vectors $(\xi,\zeta)$ and $(\Tilde\xi,\Tilde\zeta)$
to $T^*PM$ at a point $(X,\eta)$ reads
\[
\Omega_{(X,\eta)}((\xi,\zeta),(\Tilde\xi,\Tilde \zeta))=
\int_I \braket e{\Tilde\xi} -\braket{\Tilde e}\xi,
\]
where $(\xi\oplus e)$ and $(\Tilde\xi\oplus\Tilde e)$ are the corresponding
elements of $T_XPM\oplus T^*_XPM$.
By the transformation \eqref{Ua} and \eqref{Ub}, and the analogous ones 
$\Tilde\lambda=U\Tilde\xi$, $\Tilde\phi=(U^t)^{-1}\Tilde e$, we get
\begin{equation}\label{e:ana}
\Omega_{(X,\eta)}((\xi,\zeta),(\Tilde\xi,\Tilde \zeta))=
\int_I \braket\phi{\Tilde\lambda} -\braket{\Tilde \phi}\lambda.
\end{equation}
Assume now that $(\xi,e)$ is tangent to $\hagrid$. Then, by \eqref{e:lambda},
\begin{multline*}
\int_I\braket{\Tilde \phi}\lambda=
\braket{\int_I\Tilde\phi}{\lambda_0}
-\int_I\braket{\Tilde\phi}{\int_0^\bullet P^\sharp\phi}=\\
=\braket{\int_I\Tilde\phi}{\lambda_0}
-\int_I \braket\phi{(P^\sharp)^t\int_\bullet^1\Tilde\phi}.
\end{multline*}
We thus obtain
\begin{multline*}
T^\perp_{(X,\eta)}\hagrid \cong 
T^\perp_{(X,\eta)}\hagrid^\text{implicit} :=\\
\left\{\Tilde\lambda\in\Ozero,\ 
\Tilde\phi\in \Oone : \phantom{\int}\right.\\
\int_I\braket\phi{\Tilde\lambda+(P^\sharp)^t\int_\bullet^1\Tilde\phi}
-\braket{\int_I\Tilde\phi}{\lambda_0}=0,\\
\left.\phantom{\int}
\forall(\lambda_0,\phi)\in T_{(X,\eta)}\hagrid_0\right\}.
\end{multline*}

\section{Proofs to the main theorems}
Using the results and notations of Sect.~\ref{s:tangent}, we are now going to prove the main Theorems~\ref{t:one}
and~\ref{t:two} and
to draw further consequences.
\subsection{Proof of Theorem~\ref{t:one}}\label{s:proof1}
In the case $C_0=C_1=M$, we have
\[
T_{(X,\eta)}\hagr MM_0 :=\\
\{\lambda_0\in T_{X(0)}M,\ \phi\in \Oone\}.
\]
Thus, $(\Tilde\lambda,\Tilde\phi)$ belongs to $T^\perp_{(X,\eta)}\hagr MM^\text{implicit}$
if{f}
\[
\int_I\braket\phi{\Tilde\lambda+(P^\sharp)^t\int_\bullet^1\Tilde\phi}
-\braket{\int_I\Tilde\phi}{\lambda_0}=0
\]
for all $\lambda_0\in T_{X(0)}M$ and $\phi\in\Oone$.
This implies that $(\Tilde\lambda,\Tilde\phi)$ belongs to $T^\perp_{(X,\eta)}\hagr MM^\text{implicit}$
if{f}
\begin{equation}\label{e:tlambda}
\Tilde\lambda(u)+(P^\sharp)^t\int_u^1\Tilde\phi=0
\end{equation}
and
\begin{equation}\label{e:tphi}
\int_I\Tilde\phi=0.
\end{equation}
Now we have
\begin{Prop}\label{oneone}
If $\pi$ is a Poisson bivector field, then  $\hagr MM$ is coisotropic.
\end{Prop}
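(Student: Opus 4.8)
The plan is to read off both $T^\perp_{(X,\eta)}\hagr MM$ and $T_{(X,\eta)}\hagr MM$ from the twisted descriptions of Section~\ref{s:tangent} and to verify the inclusion $T^\perp\subseteq T$ pointwise. The orthogonal space has just been characterized: a pair $(\Tilde\lambda,\Tilde\phi)$ lies in $T^\perp_{(X,\eta)}\hagr MM$ if{f} it satisfies \eqref{e:tlambda} and \eqref{e:tphi}. On the other hand, since $C_0=C_1=M$ the boundary conditions in the twisted tangent space \eqref{e:Ttwisted} are vacuous, so $T_{(X,\eta)}\hagr MM$ consists exactly of those $(\lambda,\phi)\in\Ozero\times\Oone$ with $\dd\lambda+P^\sharp\phi=0$. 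Thus the goal reduces to showing that any solution $(\Tilde\lambda,\Tilde\phi)$ of \eqref{e:tlambda}--\eqref{e:tphi} automatically satisfies this single differential equation.

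The Poisson hypothesis enters through Lemma~\ref{l:piP}: when $\pi$ is Poisson, $P=\pi_0$ is \emph{constant} along the path, so $P^\sharp$ is independent of $u$; and $\pi_0$ is a genuine (skew) bivector, whence its sharp map is skew\ndash adjoint, $(P^\sharp)^t=-P^\sharp$. Substituting this into \eqref{e:tlambda} gives $\Tilde\lambda(u)=P^\sharp\int_u^1\Tilde\phi$, and since $P^\sharp$ is constant I may differentiate it out of the integral:
\[
\dd\Tilde\lambda=P^\sharp\,\dd\Bigl(\int_u^1\Tilde\phi\Bigr)=-P^\sharp\Tilde\phi,
\]
that is, $\dd\Tilde\lambda+P^\sharp\Tilde\phi=0$. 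Hence $(\Tilde\lambda,\Tilde\phi)$ meets the tangent\ndash space equation, with no boundary obstruction, and therefore lies in $T_{(X,\eta)}\hagr MM$. This yields $T^\perp_{(X,\eta)}\hagr MM\subseteq T_{(X,\eta)}\hagr MM$ at every point, i.e.\ coisotropy. (Note that the auxiliary constraint \eqref{e:tphi} is not even needed for the inclusion; it records that the orthogonal space is smaller than the tangent space, as one expects in the weak symplectic, genuinely coisotropic setting.)

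The only delicate point is that both the constancy and the skew\ndash symmetry of $P^\sharp$ are essential: constancy is what lets $P^\sharp$ come out of the $u$\ndash derivative cleanly, while skew\ndash adjointness is what converts the $(P^\sharp)^t$ appearing in \eqref{e:tlambda} into the $P^\sharp$ of the tangent equation. Without the Poisson condition $P$ would depend on $u$, and differentiating \eqref{e:tlambda} would leave an extra term $\dd\bigl((P^\sharp)^t\bigr)\int_u^1\Tilde\phi$ with no reason to vanish; this is precisely the gap that the converse direction (Proposition~\ref{onetwo}) must exploit. So beyond correctly invoking Lemma~\ref{l:piP} together with the skew\ndash adjointness identity $(P^\sharp)^t=-P^\sharp$, I expect no real obstacle.
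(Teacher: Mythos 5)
Your proof is correct and follows the same route as the paper: invoke Lemma~\ref{l:piP} to get $P=\pi_0$ constant, then check that \eqref{e:tlambda} forces membership in $T_{(X,\eta)}\hagr MM^{\text{twisted}}$, whose boundary conditions are vacuous when $C_0=C_1=M$. You merely spell out the differentiation and the skew-adjointness step $(P^\sharp)^t=-P^\sharp$ that the paper's one-line argument leaves implicit.
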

\begin{proof}
By Lemma~\ref{l:piP} we have $P=\pi_0$. So
\eqref{e:tlambda} implies that $(\Tilde\lambda,\Tilde\phi)$ belongs to 
$T_{(X,\eta)}\hagr MM^\text{twisted}$. Thus, $\hagr MM$ is coisotropic.
\end{proof}
\begin{Prop}\label{onetwo}
If $\hagr MM$ is coisotropic, then  $\pi$ is a Poisson bivector field.
\end{Prop}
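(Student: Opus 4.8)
The plan is to read off coisotropy directly from the two explicit descriptions already in hand: the symplectic orthogonal space $T^\perp_{(X,\eta)}\hagr MM$, characterized by \eqref{e:tlambda} and \eqref{e:tphi}, and the twisted tangent space \eqref{e:Ttwisted}. Since here $C_0=C_1=M$, the boundary conditions $\lambda(0)\in T_0$ and $U(1)^{-1}\lambda(1)\in T_1$ in \eqref{e:Ttwisted} are vacuous, so $T_{(X,\eta)}\hagr MM$ consists exactly of the pairs solving the linearized equation $\dd\Tilde\lambda+P^\sharp\Tilde\phi=0$. Thus coisotropy at $(X,\eta)$ means precisely that every pair $(\Tilde\lambda,\Tilde\phi)$ obeying \eqref{e:tlambda}--\eqref{e:tphi} automatically solves that equation, and the aim is to deduce from this that $P^\sharp$ must be skew-symmetric and $u$\ndash independent; then Lemma~\ref{l:Ppi} finishes the job.

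To make the condition usable I would set $\Psi(u):=\int_u^1\Tilde\phi$, so that $\Tilde\phi=-\dd\Psi$ and, by \eqref{e:tlambda}, $\Tilde\lambda=-(P^\sharp)^t\Psi$. Then \eqref{e:tphi} is exactly $\Psi(0)=0$, while $\Psi(1)=0$ holds by construction, and conversely every $T^*_{X(0)}M$\ndash valued function $\Psi$ vanishing at both endpoints arises this way. Substituting into the tangency equation and differentiating converts it into the identity of $T_{X(0)}M$\ndash valued $1$\ndash forms
\begin{equation*}
\dd\bigl((P^\sharp)^t\bigr)\,\Psi+\bigl((P^\sharp)^t+P^\sharp\bigr)\dd\Psi=0,
\end{equation*}
which coisotropy forces to hold for all admissible $\Psi$.

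The decisive step is a fundamental-lemma-of-the-calculus-of-variations argument applied pointwise. At an interior $u_0$ one chooses $\Psi$ so that $\Psi(u_0)$ and $\Psi'(u_0)$ take prescribed, independent values in $T^*_{X(0)}M$; taking $\Psi'(u_0)=0$ and varying $\Psi(u_0)$ forces $\dd\bigl((P^\sharp)^t\bigr)=0$ at $u_0$, i.e. $P^\sharp$ is constant, and taking $\Psi(u_0)=0$ and varying $\Psi'(u_0)$ forces $(P^\sharp)^t+P^\sharp=0$, i.e. $P^\sharp$ is skew-symmetric. Running $u_0$ over $(0,1)$ shows $P$ is skew-symmetric and constant. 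Because coisotropy is assumed at every $(X,\eta)\in\hagr MM$, this holds for all such points, and Lemma~\ref{l:Ppi} then yields that $\pi$ is a Poisson bivector field.

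The only delicate point I anticipate is the independence claim in the last step: one must check that the single linear constraint $\int_I\Tilde\phi=0$ (equivalently $\Psi(0)=\Psi(1)=0$) does not couple $\Psi(u_0)$ to $\Psi'(u_0)$ at an interior point. This is harmless, since the admissible $\Psi$ still form an infinite-dimensional space on which the pair $(\Psi(u_0),\Psi'(u_0))$ is unconstrained, but it is where care is needed before isolating the two coefficients; the remaining computations are the routine differentiation indicated above.
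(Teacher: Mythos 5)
Your proposal is correct and follows essentially the same route as the paper: both derive the identity $\dd\bigl((P^\sharp)^t\bigr)\int_\bullet^1\Tilde\phi-\bigl((P^\sharp)^t+P^\sharp\bigr)\Tilde\phi=0$ from coisotropy, extract that $P$ is constant and skew-symmetric by a fundamental-lemma argument, and conclude via Lemma~\ref{l:Ppi}. The only (harmless) difference is in the choice of test data: you evaluate pointwise using the independence of $\Psi(u_0)$ and $\Psi'(u_0)$ for interior $u_0$, whereas the paper uses $\Tilde\phi$ supported in two disjoint neighborhoods to first force constancy of $P$ and then skew-symmetry.
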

\begin{proof}
Since $T^\perp_{(X,\eta)}\hagr MM\subset T_{(X,\eta)}\hagr MM$, any pair 
$(\Tilde\lambda,\Tilde\phi)$ satisfying \eqref{e:tlambda} and \eqref{e:tphi} also belongs
to $T_{(X,\eta)}\hagr MM^\text{twisted}$; i.e., it satisfies
\[
\dd\Tilde\lambda+P^\sharp\Tilde\phi=0.
\]
On the other hand, differentiating \eqref{e:tlambda} yields
\[
\dd\Tilde\lambda+\dd (P^\sharp)^t\int_\bullet^1\Tilde\phi   
-(P^\sharp)^t\Tilde\phi   =0.
\]
So we get,
\begin{equation}\label{e:voldemort}
\dd (P^\sharp)^t\int_\bullet^1\Tilde\phi_j   
-((P^\sharp)^t+P^\sharp)\Tilde\phi=0
\end{equation}
for any $\Tilde\phi$ satisfying \eqref{e:tphi}.
Now let $u_0<u_1$ be points on $I$.
Let $U_0$ and $U_1$ be disjoint neighborhoods of $u_0$ and $u_1$ with $U_0<U_1$.
We then choose $\Tilde\phi$ to vanish outside $U_0\cup U_1$.
For $U_0<u<U_1$,
\eqref{e:voldemort} yields
\[
\dd (P^\sharp)^t(u)\sigma=0
\]
with
$\sigma:=\int_u^1\Tilde\phi$.
Since this holds for all $\sigma\in T^*_{X(0)}M$,
we see that $P$ must be constant. So now \eqref{e:voldemort}
reads
\[
((P^\sharp)^t+P^\sharp)\Tilde\phi=0.
\]
Again this must hold for all $\Tilde\phi$ satisfying \eqref{e:tphi}. {}From this we conclude 
that $P$ must be
skew-symmetric. Since these conclusions must hold for any solution $(X,\eta)$, Lemma~\ref{l:Ppi}
completes the proof.
\end{proof}
This concludes the proof of Theorem~\ref{t:one}.

\subsection{Proof of Theorem~\ref{t:two}}\label{s:proof2}
Assuming that $\pi$ is a Poisson bivector field, we know by Lemma~\ref{l:piP} that $P$ is constant and
equal to $\pi_0=\pi(X(0))$. So 
 $(\Tilde\lambda,\Tilde\phi)$ belongs to $T^\perp_{(X,\eta)}\hagrid^\text{implicit}$ if{f}
\begin{equation}\label{e:wormtail}
\int_I\braket\phi{\Tilde\lambda-\pi_0^{\sharp}\int_\bullet^1\Tilde\phi}
-\braket{\int_I\Tilde\phi}{\lambda_0}=0
\end{equation}
for all $\lambda_0\in T_0$ and $\phi\in\Oone$ such that
\begin{equation}\label{e:Ululi}
U(1)^{-1}\left(\lambda_0-\pi_0^\sharp\int_I \phi \right)\in T_1.
\end{equation}

\begin{Prop}\label{bazdeev}
Let $N_i^*=N^*_{X(i)}C_i$, $i=1,2$. If $\pi$ is Poisson, then
\begin{multline*}
T^\perp_{(X,\eta)}\hagrid^\text{\rm implicit}=
T^\perp_{(X,\eta)}\hagrid^\text{\rm explicit}:=\\
\{\Tilde\lambda\in\Ozero,\ \Tilde\phi\in \Oone : 
\dd\Tilde\lambda+\pi_0^\sharp \Tilde\phi = 0,\\ 
\Tilde\lambda(0)\in \pi_0^\sharp(N_0^*),\ 
U(1)^{-1}\Tilde\lambda(1)\in \pi_1^\sharp(N_1^*)\}.
\end{multline*}
\end{Prop}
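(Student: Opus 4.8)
The plan is to read off $T^\perp_{(X,\eta)}\hagrid$ directly from its implicit description, i.e.\ from demanding that \eqref{e:wormtail} hold for every $(\lambda_0,\phi)$ subject to the single linear constraint \eqref{e:Ululi}, and to show that this cuts out precisely the explicit set. I would proceed in two stages: first peel off a bulk (pointwise) differential equation, and then reduce whatever survives to a finite\ndash dimensional duality problem at the two endpoints. Throughout I use that, by Lemma~\ref{l:piP}, $P\equiv\pi_0$ is constant, so that $\pi_0^\sharp$ replaces $P^\sharp$ everywhere.

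First I would extract the bulk equation. Testing \eqref{e:wormtail} against the variations $(\lambda_0,\phi)=(0,\phi)$ with $\phi$ compactly supported in the interior of $I$ and $\int_I\phi=0$ — these automatically satisfy \eqref{e:Ululi} — forces the function $g:=\Tilde\lambda-\pi_0^\sharp\int_\bullet^1\Tilde\phi\in\Ozero$ to pair to zero with every mean\ndash zero $\phi$, hence to be constant. Writing $c$ for this constant value, evaluation at $u=1$ gives $c=\Tilde\lambda(1)$, and differentiating $g\equiv c$ yields exactly the bulk equation $\dd\Tilde\lambda+\pi_0^\sharp\Tilde\phi=0$ together with $\Tilde\lambda(0)=c+\pi_0^\sharp\int_I\Tilde\phi$.

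With $g\equiv c$ constant the integral identity \eqref{e:wormtail} collapses to the finite\ndash dimensional pairing $\braket{\int_I\phi}{c}-\braket{\int_I\Tilde\phi}{\lambda_0}=0$, now required only for all $(\lambda_0,p)$ in $W:=\{(\lambda_0,p)\in T_0\times T^*_{X(0)}M : U(1)^{-1}(\lambda_0-\pi_0^\sharp p)\in T_1\}$, where $p:=\int_I\phi$ (every $p$ being realized by some $\phi$). I would then recognize $W=\ker\Psi$ for the linear map $\Psi(\lambda_0,p)=[\lambda_0-\pi_0^\sharp p]\in T_{X(0)}M/U(1)T_1$ and use the elementary fact that a functional annihilates $\ker\Psi$ iff it equals $\Psi^*\mu$ for some $\mu$ in the dual of the target, namely $\mu\in\Ann(U(1)T_1)$. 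Matching the $p$\ndash and $\lambda_0$\ndash components separately (the former over all of $T^*_{X(0)}M$, the latter only over $T_0$, and using skew\ndash symmetry of $\pi_0$) then produces the two conditions $\pi_0^\sharp\mu=c$ and $\mu+\int_I\Tilde\phi\in N_0^*$.

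Finally I would translate these conditions on $\mu$ into the stated endpoint conditions, using the conjugation relation $\pi_0^\sharp=U(1)\,\pi_1^\sharp\,U(1)^t$ at $u=1$ (again from Lemma~\ref{l:piP}) and the identity $\Ann(U(1)T_1)=(U(1)^t)^{-1}N_1^*$. Indeed $\pi_0^\sharp\mu=c$ with $\mu\in\Ann(U(1)T_1)$ rewrites as $U(1)^{-1}\Tilde\lambda(1)=\pi_1^\sharp(U(1)^t\mu)\in\pi_1^\sharp(N_1^*)$, while $\Tilde\lambda(0)=c+\pi_0^\sharp\int_I\Tilde\phi=\pi_0^\sharp(\mu+\int_I\Tilde\phi)\in\pi_0^\sharp(N_0^*)$, giving the inclusion of the implicit set in the explicit one. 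The main obstacle is the converse bookkeeping: starting from the two endpoint conditions with their separate witnesses $\nu_0\in N_0^*$ and $\nu_1\in N_1^*$, one must assemble a \emph{single} multiplier $\mu$ satisfying all three requirements at once. This is exactly where the conjugation relation is indispensable, since it identifies the two a priori independent preimages of $c$ under $\pi_0^\sharp$ up to $\ker\pi_0^\sharp$ and lets one reconcile the $\Ann(U(1)T_1)$ and $N_0^*$ memberships; carrying out this reconciliation is the only genuinely delicate point, the forward translations above being routine.
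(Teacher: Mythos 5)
Your treatment of the implicit set is correct and follows essentially the same route as the paper: first the constancy of $\Tilde\lambda-\pi_0^\sharp\int_\bullet^1\Tilde\phi$ obtained from interior mean\ndash zero test forms, then a finite\ndash dimensional duality at the endpoints. Your packaging of the latter step --- a functional on $T_0\times T^*_{X(0)}M$ vanishes on $\ker\Psi$ iff it equals $\Psi^*\mu$ for some $\mu\in\Ann(U(1)T_1)$ --- is a clean, unified version of the paper's two\ndash step argument (Lemma~\ref{l:F} applied at $X(1)$, followed by \eqref{e:zampano}), and it lands on exactly the same characterization: the bulk equation together with the existence of a \emph{single} multiplier $\mu\in\Ann(U(1)T_1)$ satisfying $\pi_0^\sharp\mu=\Tilde\lambda(1)$ and $\mu+\int_I\Tilde\phi\in N_0^*$. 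The forward inclusion into the explicit set is then immediate, as you say.

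The gap is the converse, which you flag but do not carry out, and here your optimism is misplaced: the reconciliation you describe cannot be performed in general. Given $(\Tilde\lambda,\Tilde\phi)$ in the explicit set, the two endpoint conditions supply independent witnesses $\nu_0\in N_0^*$ and $\mu_1\in\Ann(U(1)T_1)$ with $\pi_0^\sharp\mu_1=\Tilde\lambda(1)$ and $\pi_0^\sharp\nu_0=\Tilde\lambda(0)=\pi_0^\sharp\bigl(\mu_1+\int_I\Tilde\phi\bigr)$; this only yields $\mu_1+\int_I\Tilde\phi\in N_0^*+\ker\pi_0^\sharp$, and the residual $\ker\pi_0^\sharp$ ambiguity cannot always be absorbed by adjusting $\mu_1$ within $\Ann(U(1)T_1)\cap\ker\pi_0^\sharp$. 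Concretely, take $\pi=0$ and $C_0=C_1=C$ of positive dimension: then $U=\mathit{Id}$ and a direct computation gives $T^\perp_{(X,\eta)}\hagrid=\{\Tilde\lambda=0,\ \int_I\Tilde\phi\in N_0^*+N_1^*=\Ann(T_0\cap T_1)\}$, whereas the explicit set is $\{\Tilde\lambda=0,\ \Tilde\phi\ \text{arbitrary}\}$, strictly larger. So what your (correct) computation actually establishes is the single\ndash multiplier description --- the one recorded in subsubsection~\ref{s:sym} via a section $b$ with \emph{both} $b(0)\in N_0^*$ and $U(1)^tb(1)\in N_1^*$ --- together with only the inclusion $T^\perp_{(X,\eta)}\hagrid^\text{implicit}\subset T^\perp_{(X,\eta)}\hagrid^\text{explicit}$. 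Be aware that the paper's own proof of the reverse inclusion is a one\ndash line assertion that \eqref{e:Ulula} ``is satisfied,'' which runs into the same obstruction; what is actually used downstream (Propositions~\ref{twoone} and~\ref{twotwo}) is the forward inclusion and specific elements for which the single multiplier is exhibited, so your honest stopping point is the right one to build on.
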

\begin{proof}
We may first consider $\lambda_0=0$ and $\phi$ such that $\int_I\phi=0$. 
Since \eqref{e:wormtail} must hold in particular for all $(\lambda_0,\phi)$ of this kind,
we obtain that there must be a constant
$\Tilde\lambda_1\in T_{X(0)}M$ such that
\begin{equation}\label{e:patronus}
\Tilde\lambda(u)-\pi_0^\sharp
\int_u^1\Tilde\phi=\Tilde\lambda_1,
\quad \forall u\in I.
\end{equation}
So \eqref{e:wormtail} simplifies to
\begin{equation}\label{e:phil1}
\braket{\int_I\phi}{\Tilde\lambda_1}
-\braket{\int_I\Tilde\phi}{\lambda_0}=0.
\end{equation}
Observe now that $\braket{\int_I\phi}{\Tilde\lambda_1}=\braket{U(1)^t\int_I\phi}{U(1)^{-1}\Tilde\lambda_1}$.
Set $\pi_1=\pi(X(1))$. Since 
\begin{equation}\label{e:upu}
U(1)\pi_1^\sharp U(1)^t=P(1)^\sharp=\pi_0^\sharp,
\end{equation} 
we have that
$U(1)^{-1}\pi_0^\sharp\int_I\phi=\pi_1^\sharp U(1)^t\int_I\phi$. By choosing again $\lambda_0=0$, we 
get the condition
\begin{gather*}
\braket{U(1)^t\int_I\phi}{U(1)^{-1}\Tilde\lambda_1}=0,\\
\forall \phi\in\Oone \text{ such that } \pi_1^\sharp U(1)^t\int_I\phi\in T_1.
\end{gather*}
Thus,
\begin{equation}\label{e:bezuchov}
\braket\alpha{U(1)^{-1}\Tilde\lambda_1}=0,\qquad
\forall\alpha\in T^*_{X(1)}M  \text{ such that } \pi_1^\sharp \alpha\in T_1.
\end{equation}
We use now the following simple fact from linear algebra:
\begin{Lem}\label{l:F}
Let $V$ and $W$ be vector spaces. Let $F$ be a linear map $V\to W$
and $T$ a linear subspace of $W$. Then
\[
\Ann(F^{-1}(T))=F^t(\Ann(T)),
\]
where $\Ann$ denotes the annihilator of a subspace (e.g., $\Ann(T)=\{\tau\in W^*:\tau(t)=0\ \forall t\in T\}$).
\end{Lem}
\begin{proof}
It is obvious that $F^t(\Ann(T))\subset \Ann(F^{-1}(T))$. 
We now prove the other inclusion. Let $V'$ be a complement of $F^{-1}(T)$ in $V$
and $W'$ a complement of $T\oplus F(V')$ in $W$. 
Since the restriction $F|_{V'}$ of
$F$ to $V'$ establishes an isomorphism between $V'$ and $F(V')$, for any
$\psi\in\Ann(F^{-1}(T))\subset V^*$ there is a unique $\phi\in F(V')^*$ with
$\psi=F|_{V'}^t(\phi)$. Now let $\varphi\in W^*$ be equal to $\phi$ when evaluated on elements
of $F(V')$ and zero when evaluated on elements of $T$ or $W'$.
So $\varphi\in \Ann(T)$. Since $\psi=F^t(\varphi)$, this concludes the proof.
\end{proof}
We apply the Lemma to \eqref{e:bezuchov} with $V=T^*_{X(1)}M$, $W=T_{X(1)}M$, 
$T=T_1$ and $F=\pi_1^\sharp$.
Since $\pi_1$ is skew-symmetric, $F^t=-\pi_1^\sharp$.
So we get that necessarily
\[
U(1)^{-1}\Tilde\lambda_1\in\pi_1^\sharp N_1^*,
\]
where $N_1^*=N^*_{X(1)}C$ is the annihilator of $T_1$. 
So there exists $\theta\in N_1^*$ such that $\Tilde\lambda_1=U(1)\pi_1^\sharp\theta$, and
we may rewrite \eqref{e:phil1} as
\[
\braket{\int_I\phi}{U(1)\pi_1^\sharp\theta}
-\braket{\int_I\Tilde\phi}{\lambda_0}=0,
\]
or equivalently, using again \eqref{e:upu} 
and the skew-symmetry of $\pi$,
\[
\braket\theta{U(1)^{-1}\pi_0^\sharp\int_I\phi}+
\braket{\int_I\Tilde\phi}{\lambda_0}=0.
\]
This equation has to be satisfied
for all $\lambda_0\in T_0$ and $\phi\in\Oone$ satisfying \eqref{e:Ululi}.
This is equivalent to imposing
\begin{equation}\label{e:zampano}
\braket{(U(1)^t)^{-1}\theta+\int_I\Tilde\phi}{\lambda_0}=0
\end{equation}
for all $\lambda_0\in T_0$. That is,
\begin{equation}\label{e:Ulula}
(U(1)^t)^{-1}\theta+\int_I\Tilde\phi\in N_0^*,
\end{equation}
where $N_0^*=N^*_{X(0)}C_0$ is the annihilator of $T_0$. 
Recalling \eqref{e:patronus},
\begin{equation}\label{e:patronissimus}
\Tilde\lambda(u)-\pi_0^\sharp
\int_u^1\Tilde\phi
=\Tilde\lambda_1=U(1)\pi_1^\sharp\theta=\pi_0^\sharp (U^t)^{-1}\theta,
\quad \forall u\in I,
\end{equation}
we see that a pair $(\Tilde\lambda,\Tilde\phi)$ belongs to $T^\perp_{(X,\eta)}\hagrid^\text{implicit}$ 
if{f}
there exists $\theta\in N_1^*$ such that \eqref{e:Ulula} and
\begin{equation}\label{e:Uffa}
\Tilde\lambda(u)=\pi_0^\sharp\left((U^t)^{-1}\theta+\int_u^1\Tilde\phi\right),
\quad \forall u\in I,
\end{equation}
are satisfied.
By differentianting, in order to get rid of $\theta$,  we finally obtain
\[
T^\perp_{(X,\eta)}\hagrid^\text{implicit}\subset T^\perp_{(X,\eta)}\hagrid^\text{explicit}.
\]
To prove the other inclusion, consider a pair 
$(\Tilde\lambda,\Tilde\phi)\in T^\perp_{(X,\eta)}\hagrid^\text{explicit}$. Since 
$U(1)^{-1}\Tilde\lambda(1)\in \pi_1^\sharp(N_1^*)$, there exists $\theta\in N_1^*$
such that $\Tilde\lambda_1=U(1)\pi_1^\sharp\theta$. Then the solution to the equation
has the form in \eqref{e:Uffa} and satisfies \eqref{e:Ulula}.
\end{proof}

\begin{Prop}\label{twoone}
Assume $\pi$ to be Poisson.
If $C_0$ and $C_1$ are coisotropic, then so is $\hagrid$.
\end{Prop}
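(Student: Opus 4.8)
The plan is to prove the inclusion $T^\perp_{(X,\eta)}\hagrid \subset T_{(X,\eta)}\hagrid$ at every point $(X,\eta)$, which is exactly the coisotropy condition. Since the analytically delicate identification has already been carried out in Proposition~\ref{bazdeev}, what remains is essentially a comparison of boundary conditions, so I expect no serious obstacle.

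First I would record that, because $\pi$ is Poisson, Lemma~\ref{l:piP} gives $P=\pi_0$. Substituting this into the twisted description \eqref{e:Ttwisted}, the tangent space $T_{(X,\eta)}\hagrid$ consists of pairs $(\lambda,\phi)\in\Ozero\times\Oone$ satisfying $\dd\lambda+\pi_0^\sharp\phi=0$ together with $\lambda(0)\in T_0$ and $U(1)^{-1}\lambda(1)\in T_1$. I would then place this side by side with the explicit form of the symplectic orthogonal space furnished by Proposition~\ref{bazdeev}, namely pairs $(\Tilde\lambda,\Tilde\phi)$ with $\dd\Tilde\lambda+\pi_0^\sharp\Tilde\phi=0$, $\Tilde\lambda(0)\in\pi_0^\sharp(N_0^*)$ and $U(1)^{-1}\Tilde\lambda(1)\in\pi_1^\sharp(N_1^*)$. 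The crucial observation is that the two defining differential equations coincide; only the boundary conditions differ, and the orthogonal conditions are a priori more restrictive in the interior only through their boundary data.

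The key step is then to invoke the coisotropy hypothesis on $C_0$ and $C_1$. By definition, $C_i$ coisotropic means $\pi^\sharp(N^*_{X(i)}C_i)\subset T_{X(i)}C_i$, that is $\pi_0^\sharp(N_0^*)\subset T_0$ and $\pi_1^\sharp(N_1^*)\subset T_1$. Consequently any $(\Tilde\lambda,\Tilde\phi)\in T^\perp_{(X,\eta)}\hagrid$, whose endpoint data satisfy $\Tilde\lambda(0)\in\pi_0^\sharp(N_0^*)$ and $U(1)^{-1}\Tilde\lambda(1)\in\pi_1^\sharp(N_1^*)$, automatically satisfies the weaker boundary conditions $\Tilde\lambda(0)\in T_0$ and $U(1)^{-1}\Tilde\lambda(1)\in T_1$ that cut out the tangent space.

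Since the differential equations match and the orthogonal boundary inclusions are contained in the tangent ones, every element of $T^\perp_{(X,\eta)}\hagrid$ lies in $T_{(X,\eta)}\hagrid$; as this holds at each $(X,\eta)$, the subvariety $\hagrid$ is coisotropic. The role of the coisotropy of $C_0$ and $C_1$ is thus solely to upgrade the two endpoint inclusions $\pi_i^\sharp(N_i^*)\subset T_i$, all of the remaining content having been absorbed into Proposition~\ref{bazdeev}.
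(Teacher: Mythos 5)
Your proposal is correct and follows essentially the same route as the paper: the paper's own proof likewise substitutes $P=\pi_0$ (Lemma~\ref{l:piP}), compares the explicit description of $T^\perp_{(X,\eta)}\hagrid$ from Proposition~\ref{bazdeev} with the twisted description \eqref{e:Ttwisted} of the tangent space, and notes that the coisotropy inclusions $\pi_i^\sharp(N_i^*)\subset T_i$ turn the orthogonal boundary conditions into the tangent ones. Nothing is missing.
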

\begin{proof}
In this case, by \eqref{e:Ttwisted}, we immediately have
\begin{multline*}
T^\perp_{(X,\eta)}\hagrid^\text{explicit}\subset 
T_{(X,\eta)}\hagrid^\text{twisted} =\\
\{\lambda\in\Ozero,\ \phi\in \Oone : 
\dd\lambda +\pi_0^\sharp\phi=0,\\ 
\lambda(0)\in T_0,\ U(1)^{-1}\lambda(1)\in T_1\},
\end{multline*}
for all $(X,\eta)\in\hagrid$. So $\hagrid$ is coisotropic.
\end{proof}

\begin{Prop}\label{twotwo}
Assume $\pi$ to be Poisson. If $\hagrid$ is coisotropic, the so are $\Ima p_0$ and $\Ima p_1$.
\end{Prop}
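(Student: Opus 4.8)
The plan is to read off the conclusion directly from the explicit description of the symplectic orthogonal bundle established in Proposition~\ref{bazdeev}, exploiting the freedom in the $1$-form component $\Tilde\phi$ to realize prescribed boundary values, and then invoking coisotropy in the form $T^\perp\subset T$. Recall that since $\pi$ is Poisson, Lemma~\ref{l:piP} gives $P=\pi_0$, so by Proposition~\ref{bazdeev} the space $T^\perp_{(X,\eta)}\hagrid$ consists of the pairs $(\Tilde\lambda,\Tilde\phi)$ with $\dd\Tilde\lambda+\pi_0^\sharp\Tilde\phi=0$, $\Tilde\lambda(0)\in\pi_0^\sharp(N_0^*)$ and $U(1)^{-1}\Tilde\lambda(1)\in\pi_1^\sharp(N_1^*)$, while by \eqref{e:Ttwisted} membership in $T_{(X,\eta)}\hagrid$ requires instead $\Tilde\lambda(0)\in T_0$ and $U(1)^{-1}\Tilde\lambda(1)\in T_1$. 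Coisotropy of $\hagrid$ means precisely that every pair of the first kind is also of the second kind, at every point $(X,\eta)$.

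For $\Ima p_0$, fix a solution $(X,\eta)\in\hagrid$ and put $x=X(0)$; I must show $\pi_0^\sharp(N_0^*)\subset T_0$, i.e.\ $\pi^\sharp(N^*_xC_0)\subset T_xC_0$. Given an arbitrary $v=\pi_0^\sharp w\in\pi_0^\sharp(N_0^*)$ with $w\in N_0^*$, I construct an element of $T^\perp$ taking the value $v$ at $0$: choose $\Tilde\phi=w\,\rho$, where $\rho$ is a bump $1$-form with $\int_I\rho=1$, so that $\int_I\Tilde\phi=w$, and let $\Tilde\lambda$ solve $\dd\Tilde\lambda+\pi_0^\sharp\Tilde\phi=0$ with $\Tilde\lambda(0)=v$, namely $\Tilde\lambda(u)=v-\int_0^u\pi_0^\sharp\Tilde\phi$. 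Then $\Tilde\lambda(1)=v-\pi_0^\sharp w=0$, so both explicit endpoint conditions hold ($\Tilde\lambda(0)=v\in\pi_0^\sharp(N_0^*)$ and $U(1)^{-1}\Tilde\lambda(1)=0\in\pi_1^\sharp(N_1^*)$), and thus $(\Tilde\lambda,\Tilde\phi)\in T^\perp_{(X,\eta)}\hagrid$. Coisotropy then forces $\Tilde\lambda(0)=v\in T_0$; as $v$ ranges over $\pi_0^\sharp(N_0^*)$ this gives $\pi_0^\sharp(N_0^*)\subset T_0$, and letting $x=X(0)$ range over $\Ima p_0$ shows that $\Ima p_0$ is coisotropic in $M$ relative to $C_0$.

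The argument for $\Ima p_1$ is symmetric, using the datum at the endpoint $1$. Given $v_1=\pi_1^\sharp\theta\in\pi_1^\sharp(N_1^*)$, I now impose $\Tilde\lambda(0)=0\in\pi_0^\sharp(N_0^*)$ and solve for $\Tilde\lambda(1)=U(1)v_1$; by the intertwining relation \eqref{e:upu}, which gives $U(1)\pi_1^\sharp=\pi_0^\sharp(U(1)^t)^{-1}$, this reduces to choosing $\Tilde\phi$ with $\int_I\Tilde\phi=-(U(1)^t)^{-1}\theta$, again realized by a bump $1$-form. The resulting pair lies in $T^\perp$, and coisotropy forces $U(1)^{-1}\Tilde\lambda(1)=v_1\in T_1$, whence $\pi_1^\sharp(N_1^*)\subset T_1$ at every $X(1)\in\Ima p_1$. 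The only genuinely delicate point — and the crux of the proof — is to check that each constructed pair satisfies all three defining conditions of the explicit $T^\perp$ at once; this is exactly what the freedom to route $\int_I\Tilde\phi$ through an arbitrary covector while zeroing out the opposite endpoint makes possible, so no obstruction arises.
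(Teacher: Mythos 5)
Your proof is correct and follows essentially the same route as the paper: both arguments construct, for each covector in $N_0^*$ (resp.\ $N_1^*$), an explicit element of $T^\perp_{(X,\eta)}\hagrid$ whose value at one endpoint realizes the prescribed vector of $\pi_i^\sharp(N_i^*)$ while the other endpoint is zeroed out by routing $\int_I\Tilde\phi$ appropriately (your choice $\int_I\Tilde\phi=-(U(1)^t)^{-1}\theta$ is literally the paper's), and then invoke $T^\perp\subset T$ to force the endpoint into $T_i$. The only cosmetic difference is that you quote the ``explicit'' description from Proposition~\ref{bazdeev} directly, whereas the paper phrases the verification through the intermediate conditions \eqref{e:Ulula} and \eqref{e:Uffa}; your constructed pairs satisfy those as well, so nothing is lost.
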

\begin{proof}
If $\hagrid$ is coisotropic, then any pair 
$(\Tilde\lambda,\Tilde\phi)$ that belongs to $T^\perp_{(X,\eta)}\hagrid^\text{explicit}$ 
must also belong to $T_{(X,\eta)}\hagrid^\text{twisted}$
$\forall (X,\eta)\in\hagrid$.
Thus, in particular, we must have $\Tilde\lambda(0)\in T_0$ and $U(1)^{-1}\Tilde\lambda(1)\in T_1$.

We may arbitrarily choose the end condition $\Tilde\lambda(1)$ such that 
$U(1)^{-1}\Tilde\lambda(1)\in\pi_1^\sharp(N_1^*)$
since condition \eqref{e:Ulula}
will
always be satisfied by an appropriate choice of $\Tilde\phi$ (e.g.,
such that $\int_I\Tilde\phi=-(U(1)^t)^{-1}\theta$).
So we see that
$\pi_1^\sharp(N_1^*)\subset T_1$. Since this must hold for all $(X,\eta)\in\hagrid$,
we obtain that $N^*_x C_1\subset T_x C_1$ $\forall x\in\Ima p_1$.

Similarly, we may arbitrarily choose $\Tilde\lambda(0)\in \pi_0^\sharp(N_0^*)$
since the condition $U(1)^{-1}\Tilde\lambda(1)\in\pi_1^\sharp(N_1^*)$
will
always be satisfied by an appropriate choice of $\Tilde\phi$ (e.g.,
such that $\int_I\Tilde\phi=\tau$ if $\Tilde\lambda(0)=\pi_0^\sharp\tau$).
Then we see that
$\pi_0^\sharp(N_0^*)\subset T_0$ and,  since this must hold for all $(X,\eta)\in\hagrid$,
we obtain that $N^*_x C_0\subset T_x C_0$ $\forall x\in\Ima p_0$.
\end{proof}
This concludes the proof of Theorem~\ref{t:two}.

~

\subsubsection{Symmetries}~\label{s:sym}
From now on we assume that $\pi$ is Poisson and that $C_0$ and $C_1$ are coisotropic.
By defining
\[
b:=(U(1)^t)^{-1}\theta+\int_\bullet^1\Tilde\phi\in\Ob,
\]
we finally obtain
\begin{multline*}
T^\perp_{(X,\eta)}\hagrid
= T^\perp_{(X,\eta)}\hagrid^\text{explicit} :=\\
\{\Tilde\lambda\in\Ozero,\ \Tilde\phi\in \Oone
 :\\
\exists b\in\Ob,\ b(0)\in N_0^*,\ U(1)^tb(1)\in N_1^*,\  
\Tilde\lambda=\pi_0^\sharp b,\ \Tilde\phi=-\dd b\}.
\end{multline*}
Observe that from this description of $T^\perp_{(X,\eta)}\hagrid$ it follows immediately that
\begin{equation}\label{e:Perp}
T^{\perp\perp}_{(X,\eta)}\hagrid=T_{(X,\eta)}\hagrid.
\end{equation}

If we now invert the transformations \eqref{Ua} and \eqref{Ub} to go back to tangent vectors
at $(X,\eta)$,
\begin{align*}
\Tilde\xi &= U^{-1}\Tilde\lambda\in\Gamma(X^*TM),\\
\Tilde e &= U^t\Tilde\phi\in\Gamma(T^*I\otimes X^*T^*M),\\
\intertext{and introduce}
\beta &= U^tb\in\Gamma(X^*T^*M),
\end{align*}
we obtain that the characteristic distribution of the coisotropic submanifold $\hagrid$ is given,
at the point $(X,\eta)$, by the family of vectors $(\Tilde\xi,\Tilde e)\in T_{(X,\eta)}\hagrid$
defined by
\begin{subequations}\label{e:betasymm}
\begin{align}
\Tilde\xi &= \pi^\sharp(X)\beta,\label{e:betasymm.a}\\
\Tilde e &= -D\beta,\label{e:betasymm.b}
\end{align}
\end{subequations}
for $\beta\in \Gamma(X^*T^*M)$ with $\beta(0)\in N^*_{X(0)}C_0$ and  $\beta(1)\in N^*_{X(1)}C_1$.
In local coordinates the above formulae read,
\begin{align*}
\Tilde\xi^i &= -\pi^{ij}(X)\beta_j,\\
\Tilde e_i &= -\dd\beta_i + \Gamma_{ri}^k(X)\,\dd X^r\,\beta_k +
(\nabla \pi)^{jk}_i(X)\,\eta_j\,\beta_k.
\end{align*}
These are the symmetries of the Poisson sigma model as presented in \cite{I,SS,CF01}. The boundary
conditions for $\beta$ in case of coisotropic boundary conditions has been introduced in \cite{CF03}.

\subsubsection{``Equivariant momentum map''}
Denote by $\iota_C$ the inclusion map of a submanifold $C$ into a manifold $M$ and define
\[
\Omega_C^1(M)=\{\alpha\in\Omega^1(M) : \iota^*_C\alpha=0\}.
\]
\begin{Lem}
If $M$ is a Poisson manifold and $C$ is a coisotropic submanifold, then $\Omega_C^1(M)$ is a
Lie subalgebra of $\Omega^1(M)$.
\end{Lem}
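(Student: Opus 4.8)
The plan is to make the Lie bracket on $\Omega^1(M)$ explicit and then to verify that the linear subspace $\Omega^1_C(M)$ is closed under it. Recall that the Lie bracket on $\Omega^1(M)$ associated with a Poisson structure is the Koszul bracket
\[
[\alpha,\beta]_\pi = \mathcal{L}_{\pi^\sharp\alpha}\beta - \mathcal{L}_{\pi^\sharp\beta}\alpha - \dd\,\pi(\alpha,\beta),
\]
which satisfies the Jacobi identity precisely because $\Lie\pi\pi=0$. Since $\Omega^1_C(M)$ is visibly a vector subspace, the only thing to check is that $\iota_C^*\alpha=0$ and $\iota_C^*\beta=0$ force $\iota_C^*[\alpha,\beta]_\pi=0$.

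First I would record the pointwise reformulation: $\iota_C^*\alpha=0$ is equivalent to $\alpha_x\in N^*_xC$ for all $x\in C$, and similarly for $\beta$. Coisotropy of $C$, i.e.\ $\pi^\sharp(N^*C)\subset TC$, then says exactly that the vector fields $\pi^\sharp\alpha$ and $\pi^\sharp\beta$ are tangent to $C$ along $C$. This is the single place where the coisotropic hypothesis enters, and it is the conceptual heart of the argument. The second ingredient is the standard fact that a vector field $X$ tangent to $C$ has a (local) flow preserving $C$, so that $\iota_C^*$ intertwines $\mathcal{L}_X$ with $\mathcal{L}_{X|_C}$; concretely, $\iota_C^*(\mathcal{L}_X\omega)=\mathcal{L}_{X|_C}(\iota_C^*\omega)$ for every form $\omega$, as one sees by differentiating $\iota_C^*\circ\phi_t^*=\psi_t^*\circ\iota_C^*$ at $t=0$, where $\phi_t$ and $\psi_t=\phi_t|_C$ are the flows of $X$ and $X|_C$.

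With these in hand the computation is a bookkeeping of the three terms. Applying the intertwining identity with $X=\pi^\sharp\alpha$ and $\omega=\beta$ gives $\iota_C^*(\mathcal{L}_{\pi^\sharp\alpha}\beta)=\mathcal{L}_{(\pi^\sharp\alpha)|_C}(\iota_C^*\beta)=0$, and symmetrically the second term pulls back to $0$. For the last term I would observe that the function $\pi(\alpha,\beta)=\braket{\pi^\sharp\alpha}{\beta}$ restricts to zero on $C$, since there $\pi^\sharp\alpha$ is tangent to $C$ while $\beta$ annihilates $TC$; hence $\iota_C^*\big(\dd\,\pi(\alpha,\beta)\big)=\dd\big(\iota_C^*\pi(\alpha,\beta)\big)=0$. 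Summing the three contributions yields $\iota_C^*[\alpha,\beta]_\pi=0$, so $[\alpha,\beta]_\pi\in\Omega^1_C(M)$.

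I do not anticipate a genuine obstacle: the argument is routine once the Koszul bracket and the tangency of $\pi^\sharp\alpha$ are in place, and the only points deserving explicit mention are precisely the tangency claim (where coisotropy is used) and the interchange of $\iota_C^*$ with $\mathcal{L}_X$. A slightly longer but flow-free alternative would expand each Lie derivative via Cartan's formula $\mathcal{L}_X=\iota_X\dd+\dd\iota_X$ and argue directly, but the flow argument keeps the role of the coisotropic hypothesis most transparent.
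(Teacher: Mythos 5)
Your proof is correct. The paper itself leaves the proof to the reader, remarking only that the claim ``directly follows from $N^*C$ being a Lie subalgebroid of $T^*M$''; your computation with the Koszul bracket --- tangency of $\pi^\sharp\alpha$ to $C$ from coisotropy, the intertwining of $\iota_C^*$ with the Lie derivative along a tangent vector field, and the vanishing of $\pi(\alpha,\beta)$ on $C$ --- is precisely the direct verification of that subalgebroid fact, i.e.\ the intended argument spelled out.
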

We leave the proof of this simple fact (which directly follows from $N^*C$ being a Lie subalgebroid of $T^*M$)
to the reader.

Now, given two coisotropic submanifolds $C_0$ and $C_1$, we define the Lie algebra
\[
\Plie=\{B\colon I\to\Omega^1(M) : B(i)\in\Omega_{C_i}^1(M),\ i=0,1\},
\]
where the Lie bracket is defined pointwise. The map $B$ is assumed to be continuously differentiable.
Given a path $X$ on $M$ and an element $B$ of this Lie algebra, we define
$B_X\in\Gamma(X^*T^*M)$ by
\[
B_X(u)=B(u)(X(u)).
\]
If $X(i)\in C_i$, $i=0,1$, then $B_X(i)\in N^*_{X(i)}M$. 
It is not difficult to check that,
replacing $\beta$ by $B_X$ in \eqref{e:betasymm}, one may define an infinitesimal action of $\Plie$
on $\hagrid$ whose induced foliation is the canonical foliation. On $T^*PM(C_0,C_1)$ one may also define
Hamiltonian functions for this action; viz.,
\begin{equation}\label{e:muB}
\mu_B(X,\eta):=\int_I\braket{B_X}{\dd X+\pi^\sharp(X)\eta}.
\end{equation}

\subsection{Proof of Theorem~\ref{t:onecircle}}\label{s:proof3}
We proceed as in subsection~\ref{ss:ts}. Prop.\ \ref{p:conn} still holds, so we have
\begin{multline*}
T_{(X,\eta)}\fluffy = \{\xi\in\Gamma(X^*TM),\ e\in\Gamma(T^*S^1\otimes X^*T^*M) : \\
(D\xi)  +  \pi^\sharp(X)e =0\}.
\end{multline*}
We now regard $S^1$ as the interval $I=[0,1]$ with identified end points. The fields are then regarded as periodic sections on it.
We then continue likewise up to \eqref{e:U*} getting 
\begin{multline*}
T_{(X,\eta)}\fluffy \cong
T_{(X,\eta)}\fluffy^\text{twisted} :=\\
\{\lambda\in\Ozero,\ \phi\in \Oone : 
\dd\lambda +P^\sharp\phi=0,\\ 
\lambda(1)=U(1)\lambda(0),\ \phi(0)=U(1)^t\phi(1)\}
\end{multline*}
and
\begin{multline*}
T_{(X,\eta)}\fluffy \cong
T_{(X,\eta)}\fluffy_0 :=
\Big\{\lambda_0\in T_{X(0)}M,\ \phi\in \Oone : \\
U(1)\lambda_0 = \lambda_0 - \int_I P^\sharp\phi,\ \phi(0)=U(1)^t\phi(1)\Big\}.
\end{multline*}
We then proceed as in subsection~\ref{s:sos} getting
\begin{multline*}
T^\perp_{(X,\eta)}\fluffy \cong 
T^\perp_{(X,\eta)}\fluffy^\text{implicit} :=\\
\left\{\Tilde\lambda\in\Ozero,\ 
\Tilde\phi\in \Oone : \phantom{\int}\right.\\
\Tilde\lambda(1)=U(1)\Tilde\lambda(0),\ \Tilde\phi(0)=U(1)^t\Tilde\phi(1),\\
\int_I\braket\phi{\Tilde\lambda+(P^\sharp)^t\int_\bullet^1\Tilde\phi}
-\braket{\int_I\Tilde\phi}{\lambda_0}=0,\\
\left.\phantom{\int}
\forall(\lambda_0,\phi)\in T_{(X,\eta)}\fluffy_0\right\}.
\end{multline*}
\begin{Prop}\label{onecone}
If $\pi$ is a Poisson bivector field, then  $\fluffy$ is coisotropic.
\end{Prop}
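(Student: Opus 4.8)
The plan is to follow the proof of Proposition~\ref{oneone} almost verbatim, exploiting a structural simplification special to the loop case: in the description of $T_{(X,\eta)}\fluffy^\text{twisted}$ the two periodicity conditions $\Tilde\lambda(1)=U(1)\Tilde\lambda(0)$ and $\Tilde\phi(0)=U(1)^t\Tilde\phi(1)$ are \emph{already} built into the definition of $T^\perp_{(X,\eta)}\fluffy^\text{implicit}$, so the entire burden of the inclusion $T^\perp\subset T$ reduces to checking that an orthogonal vector satisfies the single first-order equation $\dd\lambda+P^\sharp\phi=0$. Since the transformations \eqref{Ua}, \eqref{Ub} identify $T^\perp_{(X,\eta)}\fluffy$ with $T^\perp_{(X,\eta)}\fluffy^\text{implicit}$ and $T_{(X,\eta)}\fluffy$ with $T_{(X,\eta)}\fluffy^\text{twisted}$, it suffices to prove the inclusion at the level of these twisted models.

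First I would invoke Lemma~\ref{l:piP}: because $\pi$ is Poisson, $P=\pi_0:=\pi(X(0))$ is constant along the loop and skew-symmetric, so $(\pi_0^\sharp)^t=-\pi_0^\sharp$ and $\int_\bullet^1 P^\sharp\phi=\pi_0^\sharp\int_\bullet^1\phi$. Abbreviating $\Psi:=\Tilde\lambda+(\pi_0^\sharp)^t\int_\bullet^1\Tilde\phi\in\Ob$, membership of $(\Tilde\lambda,\Tilde\phi)$ in $T^\perp_{(X,\eta)}\fluffy^\text{implicit}$ means $\int_I\braket\phi\Psi-\braket{\int_I\Tilde\phi}{\lambda_0}=0$ for every $(\lambda_0,\phi)\in T_{(X,\eta)}\fluffy_0$.

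The key step is a judicious choice of test pairs. Setting $\lambda_0=0$, the constraint $U(1)\lambda_0=\lambda_0-\pi_0^\sharp\int_I\phi$ defining $T_{(X,\eta)}\fluffy_0$ becomes $\pi_0^\sharp\int_I\phi=0$; this is in particular satisfied by the stronger (hence admissible) requirement $\int_I\phi=0$, and by further taking $\phi$ supported in the open interval the periodicity condition $\phi(0)=U(1)^t\phi(1)$ holds trivially. For all such $\phi$ the orthogonality condition collapses to $\int_I\braket\phi\Psi=0$. The main, and only mildly technical, point is then the standard variational deduction that a continuous $\Psi$ annihilated by every interior-supported $\phi$ with $\int_I\phi=0$ must be constant: the functional $\phi\mapsto\int_I\braket\phi\Psi$ vanishes on the kernel of $\phi\mapsto\int_I\phi$, so it factors through the latter, yielding a constant $c$ with $\int_I\braket\phi{\Psi-c}=0$ for all $\phi$ and hence $\Psi\equiv c$.

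Finally I would differentiate $\Tilde\lambda(u)+(\pi_0^\sharp)^t\int_u^1\Tilde\phi\equiv c$ in $u$ to get $\dd\Tilde\lambda-(\pi_0^\sharp)^t\Tilde\phi=0$ and rewrite it, using skew-symmetry, as $\dd\Tilde\lambda+\pi_0^\sharp\Tilde\phi=0$. This is precisely the defining constraint of $T_{(X,\eta)}\fluffy^\text{twisted}$ (with $P=\pi_0$), and combined with the two periodicity conditions already carried by $T^\perp_{(X,\eta)}\fluffy^\text{implicit}$ it places $(\Tilde\lambda,\Tilde\phi)$ in $T_{(X,\eta)}\fluffy^\text{twisted}$. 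Thus $T^\perp_{(X,\eta)}\fluffy\subset T_{(X,\eta)}\fluffy$ for every $(X,\eta)$, so $\fluffy$ is coisotropic. The one subtlety to watch is that the restriction $\int_I\phi=0$, forced on us because $\pi_0^\sharp$ need not be injective, supplies exactly the right amount of freedom to conclude that $\Psi$ is constant rather than zero; in contrast to the path case, no separate condition $\int_I\Tilde\phi=0$ need be extracted, which is what makes the loop argument slightly shorter.
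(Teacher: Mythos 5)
Your proposal is correct and follows essentially the same route as the paper: invoke Lemma~\ref{l:piP} to get $P=\pi_0$, test against pairs with $\lambda_0=0$ and $\int_I\phi=0$, conclude that $\Tilde\lambda-\pi_0^\sharp\int_\bullet^1\Tilde\phi$ is constant, and differentiate to land in $T_{(X,\eta)}\fluffy^\text{twisted}$; you merely spell out the variational step that the paper compresses into ``this implies.'' (One typographical slip: your $\Psi$ lives in $\Ozero$, not $\Ob$.)
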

\begin{proof}
By Lemma~\ref{l:piP} we have $P=\pi_0$, so the condition on $\lambda_0$ becomes
$U(1)\lambda_0 = \lambda_0 - \pi_0^\sharp\int_I \phi$, which is in particular satisfied by $\lambda_0=0$ assuming $\int_I\phi=0$.
In particular, we get
\[
\int_I\braket\phi{\Tilde\lambda-\pi_0^\sharp\int_\bullet^1\Tilde\phi}=0,
\]
for all $\phi\in\Oone$ satisfying $\phi(0)=U(1)^t\phi(1)$ and $\int_I\phi=0$. This implies
$\dd(\Tilde\lambda-\pi_0^\sharp\int_\bullet^1\Tilde\phi)=0$, so $(\Tilde\lambda,\Tilde\phi)\in T_{(X,\eta)}\fluffy_0$.
\end{proof}
\begin{Prop}\label{onectwo}
If $\fluffy$ is coisotropic, then  $\pi$ is a Poisson bivector field.
\end{Prop}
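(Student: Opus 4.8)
The plan is to imitate the proof of Proposition~\ref{onetwo} (the path case), the only genuinely new feature being the loop-closure constraint that couples $\lambda_0$ to $\phi$ in $T_{(X,\eta)}\fluffy_0$, together with the twisted periodicity. Coisotropicity means $T^\perp_{(X,\eta)}\fluffy^\text{implicit}\subset T_{(X,\eta)}\fluffy^\text{twisted}$ for every $(X,\eta)\in\fluffy$, so in particular every pair in the orthogonal space satisfies $\dd\Tilde\lambda+P^\sharp\Tilde\phi=0$. As in the path case, the aim is to deduce that $P$ is constant and skew-symmetric along every compatible loop, and then to invoke the loop analogue of Lemma~\ref{l:Ppi} to conclude that $\pi$ is Poisson.

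Rather than disentangling the coupling between $\lambda_0$ and $\phi$ by a Lagrange-multiplier argument, I would exhibit directly a convenient subfamily of the orthogonal space on which the constraints become harmless. Namely, for any twisted-periodic $\Tilde\phi\in\Oone$ with $\int_I\Tilde\phi=0$, set $\Tilde\lambda:=-(P^\sharp)^t\int_\bullet^1\Tilde\phi$. Then $\Tilde\lambda+(P^\sharp)^t\int_\bullet^1\Tilde\phi=0$ kills the first term of the defining integral of $T^\perp_{(X,\eta)}\fluffy^\text{implicit}$, while $\int_I\Tilde\phi=0$ kills the second; moreover $\Tilde\lambda(0)=\Tilde\lambda(1)=0$, so the twisted periodicity $\Tilde\lambda(1)=U(1)\Tilde\lambda(0)$ holds automatically. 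Hence $(\Tilde\lambda,\Tilde\phi)\in T^\perp_{(X,\eta)}\fluffy^\text{implicit}$ for every such $\Tilde\phi$.

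Coisotropicity now forces all these pairs into $T_{(X,\eta)}\fluffy^\text{twisted}$, i.e. $\dd\Tilde\lambda+P^\sharp\Tilde\phi=0$. Differentiating the definition of $\Tilde\lambda$ and substituting reproduces the path-case identity \eqref{e:voldemort},
\[
\dd(P^\sharp)^t\int_\bullet^1\Tilde\phi-((P^\sharp)^t+P^\sharp)\Tilde\phi=0,
\]
now valid for every twisted-periodic $\Tilde\phi$ with $\int_I\Tilde\phi=0$. I would then run the support argument of Proposition~\ref{onetwo} verbatim: choosing $\Tilde\phi$ supported in two disjoint arcs $U_0<U_1$ inside the open interval $(0,1)$ — where twisted periodicity is automatic, the endpoint values being zero — and with total integral zero, evaluation on the gap between the arcs gives $\dd(P^\sharp)^t(u)\sigma=0$ for arbitrary $\sigma$, so $P$ is constant; feeding this back leaves $((P^\sharp)^t+P^\sharp)\Tilde\phi=0$ for arbitrary interior $\Tilde\phi$, so $P$ is skew-symmetric.

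It remains to pass from ``$P$ constant and skew along every loop'' to ``$\pi$ Poisson''. Skewness of $P^\sharp=U\pi^\sharp U^t$ is equivalent to skewness of $\pi^\sharp$ at each point of the loop's image, and since every point of $M$ lies on some compatible loop (e.g. a constant one), $\pi$ is a genuine bivector field. Constancy of $P$ means $\dd P^\sharp=UT^\sharp U^t=0$, hence $T^\sharp=0$ along every compatible loop. The hard part, and the only place where the loop case is not a transcription of Lemma~\ref{l:Ppi}, is realizing enough local data on closed curves: for each $x\in M$ and each $\be_j$ I need a compatible loop passing through $x$ with $\eta_j=\be_j\,\dd u$ near that parameter, the obstruction being that a loop must satisfy $\oint\pi^\sharp(X)\eta=0$. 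I would resolve this exactly as one builds the local solutions of \eqref{e:dX} in Lemma~\ref{l:Ppi}, but closing them up by a ``there-and-back'' reflection, taking $\eta$ to vanish near the turning points so that the reflected curve is $C^1$; the vanishing of $T$ on such a loop then yields the Schouten--Nijenhuis identity at $x$, for all $x$, which is what we want.
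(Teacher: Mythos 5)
Your proposal is correct and follows essentially the same route as the paper: the paper's proof likewise observes that any $(\Tilde\lambda,\Tilde\phi)$ with twisted-periodic $\Tilde\phi$ satisfying \eqref{e:tlambda} and \eqref{e:tphi} lies in $T^\perp_{(X,\eta)}\fluffy^\text{implicit}$, and then runs the argument of Prop.~\ref{onetwo} verbatim. The only difference is that you make explicit (via the there-and-back loop construction) the step of realizing the local data of Lemma~\ref{l:Ppi} on closed curves, which the paper leaves implicit in the phrase ``proceeds exactly as in the proof to Prop.~\ref{onetwo}''; this is a legitimate and welcome clarification rather than a divergence.
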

\begin{proof}
Notice that any $(\Tilde\lambda,\Tilde\phi)$ with $\Tilde\phi(0)=U(1)^t\Tilde\phi(1)$ satisfying \eqref{e:tlambda} and \eqref{e:tphi} belongs
to $T^\perp_{(X,\eta)}\fluffy^\text{implicit}$ and hence, since $\fluffy$ is coisotropic, to $T_{(X,\eta)}\fluffy^\text{twisted}$. The proof then proceeds exactly as in the proof to Prop.~\ref{onetwo}.
\end{proof}
This concludes the proof of Theorem~\ref{t:onecircle}.

\subsubsection{Symmetries}\label{s:symc}
Assume $\pi$ is Poisson. In the proof to Prop.~\ref{onecone} we have only considered $\lambda_0=0$ and $\int_I\phi=0$, which yields only some necessary condition to be satisfied by $(\Tilde\lambda,\Tilde\phi)$. We now want to characterize 
$T^\perp\fluffy$ completely.
\begin{Prop}
The characteristic distribution of $\fluffy$ at $(X,\eta)$ is given by the familiy of vectors
$(\Tilde\xi,\Tilde e)\in T_{(X,\eta)}\fluffy$ defined by
\begin{subequations}\label{e:betasymmcircle}
\begin{align}
\Tilde\xi &= \pi^\sharp(X)\beta,\label{e:betasymmcircle.a}\\
\Tilde e &= -D\beta,\label{e:betasymmcircle.b}
\end{align}
\end{subequations}
with $\beta\in\Gamma(X^*T^*M)$.
\end{Prop}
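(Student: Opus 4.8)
The plan is to mimic the symmetry computation of subsubsection~\ref{s:sym}, starting from the implicit description of $T^\perp_{(X,\eta)}\fluffy$ obtained in subsection~\ref{s:proof3} and specialising to $\pi$ Poisson, where Lemma~\ref{l:piP} gives $P^\sharp=\pi_0^\sharp$ (constant and skew). The essential simplification in the loop case is that $X(0)=X(1)$, so that $\pi_1:=\pi(X(1))=\pi(X(0))=:\pi_0$; combined with \eqref{e:upu} this yields $U(1)\pi_0^\sharp U(1)^t=\pi_0^\sharp$, which is exactly what makes the periodicity conditions compatible with the claimed formulas.

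First I would extract the constancy. Testing the implicit pairing condition against $\lambda_0=0$ together with $\phi$ supported in the interior of $I$ and satisfying $\int_I\phi=0$ (such $\phi$ automatically meet the periodicity constraint $\phi(0)=U(1)^t\phi(1)$), the fundamental lemma of the calculus of variations forces $\mu:=\Tilde\lambda-\pi_0^\sharp\int_\bullet^1\Tilde\phi$ to be a constant $\Tilde\lambda_1\in T_{X(0)}M$. Differentiating recovers $\dd\Tilde\lambda+\pi_0^\sharp\Tilde\phi=0$, so that $(\Tilde\lambda,\Tilde\phi)\in T_{(X,\eta)}\fluffy^\text{twisted}$, and the full pairing condition collapses to the finite-dimensional identity $\braket{\int_I\phi}{\Tilde\lambda_1}-\braket{\int_I\Tilde\phi}{\lambda_0}=0$, required to hold for all $(\lambda_0,\phi)\in T_{(X,\eta)}\fluffy_0$.

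Next comes the only genuinely new point, a short piece of linear algebra on $T_{X(0)}M\oplus T^*_{X(0)}M$. As $(\lambda_0,\phi)$ ranges over $\fluffy_0$, the pairs $(\lambda_0,\Phi:=\int_I\phi)$ fill exactly the kernel of $\calK(\lambda_0,\Phi):=(U(1)-\id)\lambda_0+\pi_0^\sharp\Phi$, since any prescribed $\Phi$ is realised by some periodic $\phi$ and then $\lambda_0$ is constrained by the monodromy relation. The reduced condition says that $(-\int_I\Tilde\phi,\Tilde\lambda_1)$ annihilates $\ker\calK$, hence lies in $\Ima\calK^t$; using $(\pi_0^\sharp)^t=-\pi_0^\sharp$ this produces a single vector $c\in T^*_{X(0)}M$ with $\pi_0^\sharp c=\Tilde\lambda_1$ and $(U(1)^t-\id)c=\int_I\Tilde\phi$. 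Setting $b(u):=c+\int_u^1\Tilde\phi$ then gives $\Tilde\phi=-\dd b$, $\Tilde\lambda=\pi_0^\sharp b$, and $b(0)=U(1)^tb(1)$; inverting \eqref{Ua}--\eqref{Uc} with $\beta:=U^tb$ yields $\Tilde\xi=\pi^\sharp(X)\beta$ and $\Tilde e=-D\beta$ with $\beta$ a periodic section of $X^*T^*M$. This establishes the inclusion of $T^\perp_{(X,\eta)}\fluffy$ into the claimed distribution.

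For the reverse inclusion I would run the computation backwards: given any $\beta\in\Gamma(X^*T^*M)$, set $\Tilde\lambda=\pi_0^\sharp b$ and $\Tilde\phi=-\dd b$ with $b=(U^t)^{-1}\beta$. Here $\mu\equiv\pi_0^\sharp b(1)$ is automatically constant, and the pairing $\braket{\int_I\phi}{\pi_0^\sharp b(1)}-\braket{\int_I\Tilde\phi}{\lambda_0}$ vanishes after substituting $b(0)-b(1)=(U(1)^t-\id)b(1)$, the $\fluffy_0$ constraint $(U(1)-\id)\lambda_0=-\pi_0^\sharp\Phi$, and the skew-symmetry of $\pi_0^\sharp$. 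The periodicity conditions $\Tilde\lambda(1)=U(1)\Tilde\lambda(0)$ and $\Tilde\phi(0)=U(1)^t\Tilde\phi(1)$ hold precisely because $\pi_1=\pi_0$ and $U(1)\pi_0^\sharp U(1)^t=\pi_0^\sharp$. I expect the main obstacle to be the bookkeeping in the linear-algebra step: correctly identifying that the admissible boundary data fill all of $\ker\calK$ (rather than a smaller set) and tracking the signs coming from $(\pi_0^\sharp)^t=-\pi_0^\sharp$, so that the single potential $c$, and hence $\beta$, is produced with the correct periodicity.
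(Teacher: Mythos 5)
Your proof is correct and follows the same overall skeleton as the paper's: first reduce the orthogonality condition to the finite\ndash dimensional identity $\braket{\int_I\phi}{\Tilde\lambda(1)}-\braket{\int_I\Tilde\phi}{\lambda_0}=0$ by testing against interior-supported $\phi$ with vanishing integral (this is exactly the paper's \eqref{e:tobe}), then solve that identity by duality, and finally transport back via $U$. Where you genuinely differ is in the middle step. The paper probes the condition in stages: first $\lambda_0=0$ with $\int_I\phi\in\ker\pi_0^\sharp$ to get $\Tilde\lambda(1)=\pi_0^\sharp\theta$, then $\lambda_0\in\ker(U(1)-\id)$ to get $\int_I\Tilde\phi=(U(1)^t-\id)\gamma$, and finally reconciles the two potentials by showing $\theta-\gamma\in\ker\pi_0^\sharp+\ker(U(1)^t-\id)=\Ann\bigl(\im\pi_0^\sharp\cap\im(U(1)-\id)\bigr)$, absorbing the discrepancy into $b$ via the auxiliary elements $\mu,\nu$. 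You instead apply $\Ann(\ker\calK)=\Ima\calK^t$ once to the combined operator $\calK(\lambda_0,\Phi)=(U(1)-\id)\lambda_0+\pi_0^\sharp\Phi$ on $T_{X(0)}M\oplus T^*_{X(0)}M$, which produces the single potential $c$ (the paper's $b(1)$) in one stroke and makes the periodicity $b(0)=U(1)^tb(1)$ immediate; this is cleaner and eliminates the reconciliation step. The only point you must (and do) justify is that the admissible boundary data $(\lambda_0,\int_I\phi)$ exhaust $\ker\calK$, which holds because the endpoint constraint $\phi(0)=U(1)^t\phi(1)$ does not restrict $\int_I\phi$. Your explicit verification of the reverse inclusion, using $\pi_1=\pi_0$ together with \eqref{e:upu} to get $U(1)\pi_0^\sharp U(1)^t=\pi_0^\sharp$ and hence the periodicity of $\Tilde\lambda$, is also sound and is left implicit in the paper.
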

\begin{proof}
We have already obtained that if $(\Tilde\lambda,\Tilde\phi)$ belongs to $T^\perp\fluffy^\text{implicit}$ then, in addition to
$\Tilde\lambda(1)=U(1)\Tilde\lambda(0)$ and $\Tilde\phi(0)=U(1)^t\Tilde\phi(1)$, it satisfies 
$\dd(\Tilde\lambda-\pi_0^\sharp\int_\bullet^1\Tilde\phi)=0$. This implies that $(\Tilde\lambda,\Tilde\phi)$ belongs to $T^\perp\fluffy^\text{implicit}$ if{f} in addition 
\begin{equation}\label{e:tobe}
\braket{\int_I\phi}{\Tilde\lambda(1)}-\braket{\int_I\Tilde\phi}{\lambda_0}=0
\end{equation}
for all 
$(\lambda_0,\phi)\in T_{(X,\eta)}\fluffy_0$.

First consider $\lambda_0=0$ and $\int_I\phi\in\ker\pi_0^\sharp$. This yields, by Lemma~\ref{l:F}, that there is a $\theta\in T^*_{X(0)}M$ 
with $\Tilde\lambda(1)=\pi_0^\sharp(\theta)$; hence 
\begin{equation}\label{e:lambdatheta}
\Tilde\lambda(u)=\pi_0^\sharp\left(\theta+\int_u^1\Tilde\phi\right).
\end{equation}
As a consequence $\braket{\int_I\phi}{\Tilde\lambda(1)}=0$, so we are left with the condition
$\braket{\int_I\Tilde\phi}{\lambda_0}=0$ for all $\lambda_0$ such that there is a $\phi$ with $(\lambda_0,\phi)\in T_{(X,\eta)}\fluffy_0$.
In particular, we may take $\phi$ such that $\int_I\phi$ is in the kernel of $\pi_0^\sharp$; since 
$\pi_0^\sharp\int_I\phi=\lambda_0-U(1)\lambda_0$,
this yields that $\lambda_0$ must now lie in the kernel of the operator
$G:=U(1)-\id$. Since $\braket{\int_I\Tilde\phi}{\lambda_0}$ must vanish for all $\lambda_0$ satisying this condition, we get that
$\int_I\Tilde\phi$ must be in the image of $G^t$. Hence there is a $\gamma\in T^*_{X(0)}M$ with
\begin{equation}\label{e:phigamma}
\int_I\Tilde\phi=U(1)^t\gamma-\gamma.
\end{equation}
Using again $\pi_0^\sharp\int_I\phi=\lambda_0-U(1)\lambda_0$, we now get $\braket{\int_I\Tilde\phi}{\lambda_0}=\braket{\int_I\phi}{\pi_0^\sharp\gamma}$; so condition 
\eqref{e:tobe}
finally reads 
$\braket{\int_I\phi}{\Tilde\lambda(1)-\pi_0^\sharp\gamma}=0$
for all $\phi$ such that there is a $\lambda_0$ with $(\lambda_0,\phi)\in T_{(X,\eta)}\fluffy_0$. Using \eqref{e:lambdatheta},
the condition becomes $\braket{\theta-\gamma}{\pi_0^\sharp\int_I\phi}=0$ or, using $\pi_0^\sharp\int_I\phi=-G\lambda_0$,
$\braket{\theta-\gamma}{G\lambda_0}=0$. Hence we have that $\theta-\gamma$ must be in the annihilator of $\im\pi_0^\sharp\cap\im G$
which is $\ker\pi_0^\sharp+\ker G^t$. We hence have $\mu,\nu\in T_{X(0)}^*M$ with  $\pi_0^\sharp\mu=0$ and $U(1)^t\nu=\nu$ such that
$\theta-\gamma=\mu+\nu$.
Finally, define 
\[
b(u):=\theta-\mu + \int_u^1\Tilde\phi = \nu + \gamma + \int_u^1\Tilde\phi.
\]
Since $U(1)^t\nu=\nu$, we have thanks to \eqref{e:phigamma} that $b(0)=U(1)^tb(1)$. This shows that
\[
\beta:=U^tb
\]
is a periodic section of $X^*T^*M$.
Since $\pi_0^\sharp\mu=0$, we have from \eqref{e:lambdatheta} that $\Tilde\lambda=\pi_0^\sharp b$. 
Moreover, since $\theta$ and $\mu$ are constant,
we have $\Tilde\phi=-\dd b$.

If we now invert the transformations \eqref{Ua} and \eqref{Ub} to go back to tangent vectors
at $(X,\eta)$,
\begin{align*}
\Tilde\xi &= U^{-1}\Tilde\lambda\in\Gamma(X^*TM),\\
\Tilde e &= U^t\Tilde\phi\in\Gamma(T^*S^1\otimes X^*T^*M),
\end{align*}
we obtain that a pair $(\Tilde\xi,\Tilde e)$ belongs to $T^\perp\fluffy$ if{f} equations \eqref{e:betasymmcircle.a} and \eqref{e:betasymmcircle.b}
are satisfied.
\end{proof}
\begin{Rem}
One may define functions $\mu_B$ on $T^*LM$ as in \eqref{e:muB} with $B$ now a map from $S^1$ to $\Omega^1(M)$. Notice that the functions $\mu_B$ generate the vanishing ideal of $\fluffy$ and that their Hamiltonian vector fields generate the distribution defined by \eqref{e:betasymmcircle.a} and \eqref{e:betasymmcircle.b}. This remark however does not replace the proof above as in the infinite dimensional case it is not automatic that the Hamiltonian vector fields of functions in the vanishing ideal span the whole characteristic distribution.
\end{Rem}

\section{Dual pairs}\label{s:dual}
In this Section we assume that $C_0$ and $C_1$ are coisotropic submanifolds of a Poisson manifold $M$.
In this case $\hagrid$ is a coisotropic submanifold of $T^*PM$ and
its leaf space $\underline\hagrid$ is endowed with a symplectic structure. On the other hand,
the leaf spaces $\underline{C_0}$ and $\underline{C_1}$ are endowed with a Poisson structure.
By \eqref{e:betasymm.a} and the conditions on $\beta$,
the maps $p_i$ of \eqref{e:p} descend to the quotients
\[
\up_i\colon\underline{\hagrid}\to \underline{C_i},
\qquad i=0,1.
\]
Proceeding as in the proof of Theorem~4.6 in \cite{CF01}, one may prove
that $\up_0$ and $\up_1$ are a Poisson and an anti-Poisson map respectively.
We will prove the following
\begin{Lem}\label{l:dementor}
$\ker\dd\up_0$ and $\ker\dd\up_1$ are symplectically orthogonal at any point of $\underline\hagrid$,
i.e.,
\[
(\ker\dd\up_0(x))^\perp = \ker\dd\up_1(x),
\qquad \forall x\in\underline\hagrid.
\]
\end{Lem}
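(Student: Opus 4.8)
The plan is to pull the whole statement back to $\hagrid$ and turn it into one orthogonality identity in $T^*PM$, which can then be settled by the same integration by parts used in Proposition~\ref{bazdeev}. First I would make the two kernels explicit. Writing $\dd p_i$ for the differential of the map $p_i$ of~\eqref{e:p}, and recalling that the characteristic distribution of the coisotropic $C_i\subset M$ at $X(i)$ is $\pi_i^\sharp(N_i^*)$, set
\[
\calK_i:=(\dd p_i)^{-1}\bigl(\pi_i^\sharp(N_i^*)\bigr)\subset T_{(X,\eta)}\hagrid,\qquad i=0,1.
\]
In the twisted variables of~\eqref{e:U*} one has $\dd p_0(\Tilde\lambda,\Tilde\phi)=\Tilde\lambda(0)$ and $\dd p_1(\Tilde\lambda,\Tilde\phi)=U(1)^{-1}\Tilde\lambda(1)$, so that $\calK_0$ is cut out inside $T_{(X,\eta)}\hagrid^\text{twisted}$ by $\Tilde\lambda(0)\in\pi_0^\sharp(N_0^*)$ and $\calK_1$ by $U(1)^{-1}\Tilde\lambda(1)\in\pi_1^\sharp(N_1^*)$. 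The description of $T^\perp_{(X,\eta)}\hagrid$ in subsubsection~\ref{s:sym} (with $\beta=U^tb$, $b(0)\in N_0^*$, $U(1)^tb(1)\in N_1^*$) shows $T^\perp_{(X,\eta)}\hagrid\subset\calK_i$, so each $\calK_i$ descends to the leaf space and $\ker\dd\up_i=\calK_i/T^\perp_{(X,\eta)}\hagrid$.

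Next I would dispose of the passage to the quotient. Since the reduced form satisfies $\uOmega(\bar v,\bar w)=\Omega(v,w)$ on any lifts, a class $\bar w$ lies in $(\ker\dd\up_0)^\perp$ exactly when $\Omega(w,v)=0$ for all $v\in\calK_0$, i.e. exactly when a lift $w$ lies in $\calK_0^\perp$ (orthogonal in $T^*PM$); as $w\in T\hagrid$ anyway, the Lemma becomes the single identity
\[
\calK_0^\perp\cap T_{(X,\eta)}\hagrid=\calK_1.
\]
I stress that this reformulation is purely definitional and in particular does not invoke the reflexivity $\calK^{\perp\perp}=\calK$, which is unavailable in the weak symplectic Banach setting; this is exactly why I would prove the identity by two explicit inclusions rather than by a duality argument.

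Both inclusions reuse the reduction of $\Omega$ from Proposition~\ref{bazdeev}: for $v=(\Tilde\lambda,\Tilde\phi)\in T_{(X,\eta)}\hagrid$ and $w=(\lambda,\phi)\in\calK_0$ parametrised by $(\lambda_0,\phi)$ with $\lambda_0=\lambda(0)\in\pi_0^\sharp(N_0^*)$ and the endpoint condition~\eqref{e:Ululi}, the pairing $\Omega(v,w)$ is again the left-hand side of~\eqref{e:wormtail}. For $\calK_0^\perp\cap T\hagrid\subset\calK_1$ I would feed in only the test vectors with $\lambda_0=0$, which lie in $\calK_0$; these reproduce the $\lambda_0=0$ part of the proof of Proposition~\ref{bazdeev}, where one shows that $\Tilde\lambda_1:=\Tilde\lambda(u)-\pi_0^\sharp\int_u^1\Tilde\phi$ is constant and satisfies $U(1)^{-1}\Tilde\lambda_1\in\pi_1^\sharp(N_1^*)$. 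Since $\Tilde\lambda_1=\Tilde\lambda(1)$, this is exactly $v\in\calK_1$. For the reverse inclusion I would start from $v\in\calK_1$, write $\Tilde\lambda_1=\pi_0^\sharp(U(1)^t)^{-1}\theta$ with $\theta\in N_1^*$ via~\eqref{e:upu}, and use $\theta\in N_1^*=\Ann(T_1)$ together with~\eqref{e:Ululi} to collapse~\eqref{e:wormtail} to $-\braket{c}{\lambda_0}$, where $c:=(U(1)^t)^{-1}\theta+\int_I\Tilde\phi$. The decisive point is that $\pi_0^\sharp c=\Tilde\lambda(0)$, so that for $\lambda_0=\pi_0^\sharp\sigma$ with $\sigma\in N_0^*$ skew-symmetry of $\pi_0$ gives $\braket{c}{\lambda_0}=-\braket{\sigma}{\pi_0^\sharp c}=-\braket{\sigma}{\Tilde\lambda(0)}=0$, because $\Tilde\lambda(0)\in T_0$ while $\sigma\in N_0^*$. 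Hence $\Omega(v,w)=0$ for all $w\in\calK_0$, i.e. $v\in\calK_0^\perp$.

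The computations are short once this bookkeeping is set up, so the main obstacle is not a hard estimate but the conceptual handling of the symplectic orthogonal inside the infinite-dimensional, merely weak symplectic quotient $\underline\hagrid$: everything must be reduced to the definitional identity $\calK_0^\perp\cap T\hagrid=\calK_1$ so as never to rely on $\perp\perp$-reflexivity. The one genuinely substantive observation is that restricting the test vectors to $\lambda_0\in\pi_0^\sharp(N_0^*)$ is precisely what annihilates the $\lambda_0$-dependent term of~\eqref{e:wormtail} by coisotropy of $C_0$, so that $\calK_0^\perp\cap T\hagrid$ shrinks to $\calK_1$ and to nothing larger.
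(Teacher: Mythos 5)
Your proof is correct and follows essentially the same route as the paper: you identify $\ker\dd\up_i$ with $\varpi_i^{-1}(Z_i^\perp)/V^\perp$ (your $\calK_i/T^\perp\hagrid$), reduce the pairing to $\braket{\int_I\phi}{\Tilde\lambda_1}-\braket{\int_I\Tilde\phi}{\lambda_0}$ exactly as in Proposition~\ref{bazdeev}, and establish the two inclusions by the same computations (the paper's Step~1 is your reverse inclusion, its Step~2 your forward one). The only difference is organizational: by taking the orthogonal intersected with $T_{(X,\eta)}\hagrid$ from the start --- which is indeed all the quotient statement requires --- you get the conditions $\dd\Tilde\lambda+\pi_0^\sharp\Tilde\phi=0$ and $\Tilde\lambda(0)\in T_0$ for free, whereas the paper re-derives them from orthogonality alone, the latter via the finite-dimensional identity $\Ann(\pi_0^\sharp N_0^*)=(\pi_0^\sharp)^{-1}(T_0)$ from Lemma~\ref{l:F}.
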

In other words, $\underline{C_0}\stackrel{{\up_0}}{\longleftarrow}\underline\hagrid\stackrel{\up_1}{\longrightarrow}\underline{C_1}$
is is a Lie--Weinstein dual pair \cite{W}. Observe that 
the maps $\up_i$ may fail to be
surjective submersions, so this dual pair is in general not full.
\begin{Rem}
Since the quotient  $\underline\hagrid$ is finite dimensional, the above condition is equivalent to
\[
(\ker\dd\up_1(x))^\perp = \ker\dd\up_0(x),
\qquad \forall x\in\underline\hagrid.
\]
\end{Rem}
Notice that the maps $\up_i$s are defined, as continuous maps, 
even if the leaf spaces are not smooth. Lemma~\ref{l:dementor} 
makes sense also in the nonsmooth case if we define $T_{[x]}\underline{\hagrid}$
as $\underline{T_x\hagrid}$, $x\in[x]\in\underline{\hagrid}$, and
$T_{[x]}\underline{C_i}$ as $\underline{T_x C_i}$, $x\in[x]\in\underline{C_i}$.
The linear maps $\ker\dd\up_0$ and $\ker\dd\up_1$ are also well defined.
Thus, we may think of $\underline{C_0}\stackrel{{\up_0}}{\longleftarrow}\underline\hagrid\stackrel{\up_1}{\longrightarrow}\underline{C_1}$
as of a singular Lie--Weinstein dual pair. 
%
%
\begin{proof}[Proof of Lemma~\ref{l:dementor}]
Let $(X,\eta)$ be a representative of $x\in\underline\hagrid$.
We introduce the following notations:
\begin{align*}
V &:= T_{(X,\eta)}\hagrid,\\
Z_i &:= T_{X(i)}C_i,\\
Z_i^\perp &:= \pi^\sharp(X(i))(N^*_{X(i)}C_i),\\
\varpi_i &:= \dd p_i(X,\eta),\\
\end{align*}
for $i=0,1$. So we have the following commutative diagram of vector spaces:
\[
\begin{CD}
V @>{\varpi_i}>> Z_i \\
@VVV @VVV\\
V/V^\perp @>>{\dd\up_i(x)}> Z_i/Z_i^\perp
\end{CD}
\]
We then have
\begin{align*}
\ker\dd\up_i(x)&=\varpi_i^{-1}(Z_i^\perp)/V^\perp,\\
(\ker\dd\up_i(x))^\perp &=(\varpi_i^{-1}(Z_i^\perp))^\perp/V^\perp,
\end{align*}
and
\begin{align*}
\varpi_0^{-1}(Z_0^\perp)=
\{\lambda\in\Ozero,\ \phi\in \Oone : 
\dd\lambda +\pi_0^\sharp\phi=0,\\ 
\lambda(0)\in \pi_0^\sharp(N_0^*),\ U(1)^{-1}\lambda(1)\in T_1\},\\
\varpi_1^{-1}(Z_1^\perp)=
\{\lambda\in\Ozero,\ \phi\in \Oone : 
\dd\lambda +\pi_0^\sharp\phi=0,\\ 
\lambda(0)\in T_0,\ U(1)^{-1}\lambda(1)\in \pi_1^\sharp(N_1^*)\}.
\end{align*}

\subsubsection*{Step 1:} $(\ker\dd\up_1(x)) \subset \ker\dd\up_0(x)^\perp$.

It is enough to show that
\[
\Omega_{(X,\eta)}((\xi,\zeta),(\Tilde\xi,\Tilde \zeta))=0,\quad
\forall (\xi,\zeta)\in \varpi_0^{-1}(Z_0^\perp),\ 
(\Tilde\xi,\Tilde\zeta)\in \varpi_1^{-1}(Z_1^\perp),
\]
since this implies
$\varpi_1^{-1}(Z_1^\perp)^\perp\subset\varpi_0^{-1}(Z_0^\perp)^\perp$
which in turn implies the desired result on the quotient.
Using \eqref{e:ana} with
\begin{align*}
\lambda &= \lambda_0 -\pi_0^\sharp\int_0^\bullet\phi,\\
\Tilde\lambda &= \Tilde\lambda_1 +\pi_0^\sharp\int_\bullet^1\Tilde\phi,
\end{align*}
we get
\[
\Omega_{(X,\eta)}((\xi,\zeta),(\Tilde\xi,\Tilde \zeta))=
\braket{\int_I\phi}{\Tilde\lambda_1} -\braket{\int_I\Tilde \phi}{\lambda_0}.
\]
If we now write
\begin{align*}
\lambda_0 &= \pi_0^\sharp\alpha,\\
\Tilde\lambda_1 &= U(1)\pi_1^\sharp\beta = \pi_0^\sharp (U^t)^{-1}\beta,
\end{align*}
with $\alpha\in N_0^*$ and $\beta\in N_1^*$, we get
\[
\braket{\int_I\phi}{\Tilde\lambda_1} = \braket{\int_I\Tilde \phi}{\lambda_0} =
-\braket{U(1)^{-1}\pi_0^\sharp\alpha}\beta,
\]
which completes Step~1.

\subsubsection*{Step 2:} $(\ker\dd\up_0(x))^\perp \subset \ker\dd\up_1(x)$.

Let $(\Tilde\lambda,\Tilde\phi)$ be an element of $(\varpi_0^{-1}(Z_0^\perp))^\perp$.
Then proceeding exactly as in the proof of Proposition~\ref{bazdeev}, we see that
this element must satisfy \eqref{e:patronus} with 
$\Tilde\lambda_1=U(1)\pi_1^\sharp\theta$ for some $\theta\in N_1^*$.
We still have condition \eqref{e:zampano} but now for all $\lambda_0\in\pi_0^\sharp(N_0^*)$.
This implies that
\[
(U^t)^{-1}\theta+\int_I\Tilde\phi\in\Ann(\pi_0^\sharp N_0^*).
\]
In finite dimensions, Lemma~\fullref{l:F} implies $F^{-1}(T)=\Ann(F^t(\Ann(T)))$.
Taking $T=T_0$ and $F=\pi_0^\sharp$, we then get
\[
\Ann(\pi_0^\sharp N_0^*) = \Ann(\pi_0^\sharp\Ann(T_0))=
(\pi_0^\sharp)^{-1}(T_0).
\]
Thus, \eqref{e:Uffa} implies
\[
\Tilde\lambda(0)=\pi_0^\sharp\left((U^t)^{-1}\theta+\int_I\Tilde\phi\right)\in T_0.
\]
Hence $(\Tilde\lambda,\Tilde\phi)$ is an element of $\varpi_1^{-1}(Z_1^\perp)$,
and its class modulo $V^\perp$ is an element of $\ker\dd\up_1(x)$.
\end{proof}

\subsection{Composition}
Under certain technical conditions (see \cite{L} and references therein), dual pairs can be composed by symplectic reduction. Namely, let $P_0$, $P_1$ and $P_2$ be Poisson
manifolds, $S_0$ and $S_1$ symplectic manifolds, together with Poisson maps
$I_0$, $J_0$, and anti-Poisson maps $I_1$ and $J_1$ as in the following diagram:
\[
\begin{CD}
P_0 @<{I_0}<< S_0 @>{I_1}>> P_1 @<{J_0}<< S_1 @>{J_1}>> P_2,
\end{CD}
\]
then $S_0\times_{P_1} S_1$ is a coisotropic submanifold of $S_0\times \overline{S_1}$
($\overline{S_1}$ denotes $S_1$ with opposite symplectic structure), and
the maps $I_0$, $J_1$ descend to the symplectic quotients, so that
\[
\begin{CD}
P_0 @<\underline{I_0}<< \underline{S_0\times_{P_1} S_1} @>\underline{J_1}>> P_2
\end{CD}
\]
is a new dual pair which we will denote by
\[
S_0\star S_1.
\]
Of course, without the appropriate assumptions,
this might be quite singular; even if we started with smooth manifolds,
already the fibered product $S_0\times_{P_1} S_1$ might not be a manifold,
unless $I_1$ and $J_0$ are surjective submersions. For the reduced space to be smooth
as well, one need some more assumptions, see \cite{L}.

In our case, we allow all sorts of singularity. Given coisotropic submanifolds $C_0$,
$C_1$ and $C_2$, we can construct singular dual pairs by $\underline\hagrid$
and $\underline\hagridbis$. A natural question is whether
$\underline{\hagr{C_0}{C_2}}=\underline\hagrid\star\underline\hagridbis$.
Roughly speaking the composition of these dual pairs arises by joining paths
at a fiber of $C_1\to \underline{C_1}$. So we may expect the above identity to hold
only if every path from $C_0$ to $C_2$ is equivalent to a path that passes through
$C_1$. Otherwise in the composition we will select only paths with this property, so we may
expect that, in general, only the following inclusion relation holds:
\[
\underline{\hagr{C_0}{C_2}}\supset\underline\hagrid\star\underline\hagridbis.
\]
In the next Section, through another description of our singular dual pairs,
this will be more clear. Another way out is the extension to this case of the construction in \cite{CC,Contrthesis}, 
where we might speak of relational dual pairs. 

\section{Reduced spaces}\label{s:fd}
Let $(M,\pi)$ be a Poisson manifold, then $\G(M)=\underline{\hagr MM}$ is the
(possibly singular) source-simply-connected symplectic groupoid of $M$ \cite{CF01}.
In this case we will denote by $s$ and $t$ (instead of $\up_0$ and $\up_1$)
the Poisson and anti-Poisson maps to $M$. Given two submanifolds $C_0$ and $C_1$
of $M$, we define
\[
\rubeus = s^{-1}(C_0)\cap t^{-1}(C_1).
\]
If $C_0$ and $C_1$ are coisotropic, then so are $s^{-1}(C_0)$, $t^{-1}(C_1)$
and (because of the symplectic orthogonality of the $s$\ndash\ and $t$\ndash fibers)
also $\rubeus$. We may then consider its reduction $\underline\rubeus$.
We have the following
\begin{Thm}\label{t:rubeushagrid}
$\underline\rubeus=\underline\hagrid$.
\end{Thm}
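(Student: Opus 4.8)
The plan is to deduce $\underline\rubeus=\underline\hagrid$ from a single set\ndash theoretic identity together with the principle of reduction in stages. Write $\pr\colon\hagr MM\to\G(M)=\underline{\hagr MM}$ for the projection onto the leaf space. By construction the source and target maps are the descents of $p_0$ and $p_1$, so $s\circ\pr=p_0$ and $t\circ\pr=p_1$; hence
\[
\pr^{-1}(\rubeus)=\pr^{-1}\big(s^{-1}(C_0)\big)\cap\pr^{-1}\big(t^{-1}(C_1)\big)
=p_0^{-1}(C_0)\cap p_1^{-1}(C_1)=\hagrid .
\]
Thus $\hagrid$ is exactly the $\pr$\ndash preimage of $\rubeus$, and the whole statement becomes the assertion that reducing $\hagrid$ in $T^*PM$ agrees with reducing $\rubeus$ in $\G(M)$.

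First I would isolate the abstract mechanism. Let $(P,\Omega)$ be symplectic, let $C\subset P$ be coisotropic with leaf space $\underline C$ and projection $\pr\colon C\to\underline C$, so that the restriction of $\Omega$ to $C$ equals $\pr^*\underline\Omega$, and let $D\subset\underline C$ be coisotropic. Put $E:=\pr^{-1}(D)$. Then $T_eE=(\dd\pr)^{-1}(T_{\pr(e)}D)\supset\ker\dd\pr=T_e^\perp C$; using $T^{\perp\perp}C=TC$ (cf.\ \eqref{e:Perp}) this forces $T_e^\perp E\subset T_eC$, where $\Omega$ is pulled back from $\underline\Omega$. A one\ndash line computation then gives
\[
T_e^\perp E=(\dd\pr)^{-1}\big(T_{\pr(e)}^\perp D\big),
\]
so $E$ is coisotropic and $\dd\pr$ carries the characteristic leaves of $E$ onto those of $D$, the fibres of $\pr$ being collapsed. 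Passing to quotients, $\pr$ descends to an isomorphism $\underline E\cong\underline D$.

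I would then apply this with $P=T^*PM$, $C=\hagr MM$ (coisotropic by Prop.~\ref{oneone}), $\underline C=\G(M)$ and $D=\rubeus$ (coisotropic in $\G(M)$, as recalled before the statement), for which $E=\pr^{-1}(\rubeus)=\hagrid$ by the identity above. The desired equality $\underline\hagrid=\underline\rubeus$ follows at once. As a consistency check one can read the two\ndash stage splitting directly off \S\ref{s:sym}: comparing the description of $T^\perp\hagrid$ there with the case $C_0=C_1=M$, where $N^*_{X(i)}M=0$ forces $\beta(0)=\beta(1)=0$, shows $T^\perp\hagr MM\subset T^\perp\hagrid$, i.e.\ the fibres of $\pr$ sit inside the leaves of $\hagrid$; the complementary directions are governed by $\beta(0)\in N^*_{X(0)}C_0$, $\beta(1)\in N^*_{X(1)}C_1$ and, by \eqref{e:betasymm.a}, have images under $\dd p_0=\dd s\circ\dd\pr$ and $\dd p_1=\dd t\circ\dd\pr$ lying in $\pi^\sharp(X(i))\big(N^*_{X(i)}C_i\big)$, which is precisely the characteristic distribution of $\rubeus$ along the $s$\ndash\ and $t$\ndash fibres.

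The main obstacle is not the linear algebra but the singular and infinite\ndash dimensional bookkeeping. Everything must be read through the Zariski tangent spaces of \S\ref{intro}, the leaf spaces $\G(M)$, $\underline\hagrid$ and $\underline\rubeus$ may fail to be smooth, and the equality asserted is one of these possibly singular reduced spaces. Making the reduction\ndash in\ndash stages step rigorous therefore comes down to verifying that the characteristic distribution of $\hagrid$ is genuinely the $\pr$\ndash saturation of $T^\perp\rubeus$\Ndash exactly the content supplied by the explicit formulas of \S\ref{s:sym}\Ndash so that collapsing $\hagrid$ in one step coincides with first quotienting the $\pr$\ndash fibres and then reducing $\rubeus$.
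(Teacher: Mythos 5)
Your argument is correct and is essentially the paper's own: the paper proves this via a ``reduction in stages'' lemma applied to $W=T_{(X,\eta)}\hagrid\subset V=T_{(X,\eta)}\hagr MM\subset S=T_{(X,\eta)}T^*PM$, with the observation $W/V^\perp=T_x\rubeus$ playing exactly the role of your identity $\pr^{-1}(\rubeus)=\hagrid$. The only (immaterial) difference is the direction of the deduction --- the paper takes the coisotropicity of $\hagrid$ upstairs as input and pushes it down, while you take the coisotropicity of $\rubeus$ downstairs as input and pull it back --- but both rest on the same linear algebra at the level of (Zariski) tangent spaces.
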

This Theorem is a consequence of the following
\begin{Lem}[Reduction in stages]
Let $S$ be a (possibly infinite-\hspace{0pt}dimensional, weak) symplectic space.
Let $V$ be a subspace of $S$ and $W$ a subspace of $V$.
If $W$ is coisotropic in $S$, then:
\begin{enumerate}
\item $V^\perp\subset W^\perp\subset W\subset V$, and in particular
$V$ is also coisotropic.
\item $W/V^\perp$ is coisotropic in $V/V^\perp$.
\item $W/W^\perp=(W/V^\perp)/(W/V^\perp)^\perp$.
\end{enumerate}
\end{Lem}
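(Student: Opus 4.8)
The plan is to work purely in linear-symplectic terms, chasing the definitions of the three claimed statements through the relations among $W^\perp$, $W$, $V^\perp$, $V$. The ambient object is a (possibly weak) symplectic space $S$ with the orthogonal complement $(\cdot)^\perp$ taken relative to $\omega$; the only hypothesis is that $W\subset V\subset S$ and that $W$ is coisotropic, i.e.\ $W^\perp\subset W$. Throughout I would use the two elementary facts about symplectic orthogonals that hold even in the weak case: orthogonality reverses inclusions, so $A\subset B$ forces $B^\perp\subset A^\perp$; and $A\subset A^{\perp\perp}$ always. (The reverse inclusion $A^{\perp\perp}\subset A$ requires finite dimensionality or a reflexivity hypothesis, so I would be careful \emph{not} to invoke it except where it has been established, e.g.\ via \eqref{e:Perp}.)

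\textbf{Part (1).} From $W\subset V$ and inclusion-reversal I immediately get $V^\perp\subset W^\perp$. The hypothesis that $W$ is coisotropic gives $W^\perp\subset W$. Chaining these with $W\subset V$ yields the full chain $V^\perp\subset W^\perp\subset W\subset V$. In particular $V^\perp\subset V$, which is exactly the statement that $V$ is coisotropic. This part is pure formal manipulation and should take only a line or two.

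\textbf{Part (2).} Here I pass to the quotient symplectic space $V/V^\perp$, which carries a well-defined (weak) symplectic form induced by $\omega$ because $V^\perp$ is precisely the kernel of $\omega|_V$. The claim is that the image $W/V^\perp$ (well-defined since $V^\perp\subset W$ by part (1)) is coisotropic in $V/V^\perp$. The key computational point is to identify the orthogonal complement of $W/V^\perp$ \emph{inside $V/V^\perp$}: I would show that it equals $(W^\perp\cap V)/V^\perp$, or more directly $W^\perp/V^\perp$ once I check $W^\perp\subset V$ (which follows from $W^\perp\subset W\subset V$ in part (1)). Concretely, a class $[v]\in V/V^\perp$ is orthogonal to all of $W/V^\perp$ iff $\omega(v,w)=0$ for all $w\in W$, i.e.\ iff $v\in W^\perp\cap V=W^\perp$; hence $(W/V^\perp)^\perp = W^\perp/V^\perp$. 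Since part (1) gives $W^\perp\subset W$, we get $(W/V^\perp)^\perp = W^\perp/V^\perp\subset W/V^\perp$, which is coisotropy of $W/V^\perp$ in $V/V^\perp$.

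\textbf{Part (3).} Finally I would compare the two reductions of $W$: directly as $W/W^\perp$, and in stages as $(W/V^\perp)/(W/V^\perp)^\perp$. Using the identification of part (2), the inner orthogonal is $(W/V^\perp)^\perp = W^\perp/V^\perp$, so the two-step quotient is $(W/V^\perp)/(W^\perp/V^\perp)$, which by the third isomorphism theorem for quotients of vector spaces (all these are genuine subspace inclusions $V^\perp\subset W^\perp\subset W$ from part (1)) is canonically $W/W^\perp$. I would then remark that this vector-space isomorphism is symplectic, since both reduced forms are induced from the same $\omega|_W$. \emph{The main obstacle} is bookkeeping in the weak (infinite-dimensional) setting: I must verify that the orthogonal complement computed inside $V/V^\perp$ really is $W^\perp/V^\perp$ and not merely contained in it, and that no hidden reflexivity assumption sneaks in—so the decisive check is the set-theoretic equality $(W/V^\perp)^\perp = W^\perp/V^\perp$ in Part (2), which is where the weak-symplectic subtleties concentrate; the rest is then formal algebra.
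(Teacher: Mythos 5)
Your proof is correct: the paper itself omits the argument (stating only that ``the proof is a simple exercise in linear algebra''), and your chain $V^\perp\subset W^\perp\subset W\subset V$, the set-theoretic identification $(W/V^\perp)^\perp=W^\perp/V^\perp$ inside $V/V^\perp$ (which indeed needs no reflexivity, only $W^\perp\subset W\subset V$), and the third isomorphism theorem are exactly the intended exercise. You are also right to flag that the only place weak-symplectic subtleties could enter is the equality in Part (2), and your direct computation there settles it.
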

The proof is a simple exercise in linear algebra.

\begin{proof}[Proof of Theorem~\ref{t:rubeushagrid}]
Let $x$ be a point in $\rubeus\subset\G(M)$ and $(X,\eta)$ 
a representative of $x$ in $\hagrid\subset\hagr MM$.
Then we apply the reduction in stages to the following spaces
\begin{align*}
S &:= T_{(X,\eta)}T^*PM,\\
V &:=  T_{(X,\eta)}\hagr MM,\\
W &:= T_{(X,\eta)}\hagrid,\\
\intertext{observing that}
W/V^\perp &= T_x\rubeus.
\end{align*}
\end{proof}

Thanks to Theorem~\ref{t:rubeushagrid} we may now easily discuss a few examples.

\begin{Exa}[Trivial Poisson structure]
For $\pi=0$ we have $\G(M)=T^*M$ with canonical symplectic structure and with
$s=t=$projection $T^*M\to M$.
Any submanifold of $M$ is automatically coisotropic with trivial foliation.
Then we have
\begin{align*}
\calC_0(M;C_0,C_1) &= T^*_{C_0\cap C_1} M\\
\intertext{and}
\underline{\calC_0(M;C_0,C_1)} &= T^*(C_0\cap C_1)
\end{align*}
which is a manifold if{f} $C_0\cap C_1$ is so. Moreover, a simple computation shows that
\[
\underline{\calC_0(M;C_0,C_1)}\star\underline{\calC_0(M;C_1,C_2)}=
T^*(C_0\cap C_1\cap C_2)\subset \underline{\calC_0(M;C_0,C_2)}.
\]
\end{Exa}
\begin{Exa}[Symplectic case]
Let $M$ be a symplectic manifold, and $\pi$ the corresponding Poisson structure.
For simplicity we assume $M$ to be simply connected. Then
$\G(M)=M\times\overline M$, where $\overline M$ denotes $M$ with opposite symplectic structure.
The maps $s$ and $t$ are the projections to the factors.
Thus,
\begin{align*}
\hagrid &= C_0\times C_1,\\
\intertext{and, in case $C_0$ and $C_1$ are coisotropic,}
\underline\hagrid &= \underline{C_0}\times \underline{C_1}.
\end{align*}
In this case,
\[
\underline{\hagr{C_0}{C_2}}=\underline\hagrid\star\underline\hagridbis,
\]
for any three coisotropic submanifolds $C_0$, $C_1$ and $C_2$.
\end{Exa}


Observe that the map to $M$ from $\rub{C_0}M$ (resp., $\rub M{C_1}$) is a surjective
submersion if $C_0$ (resp., $C_1$) has a clean intersection with every symplectic leaf of $M$.
Under this condition, the dual pairs behave well and the composition is well defined
(in the world of differentiable stacks). Thus, if we define a coisotropic
submanifold to be nice when it has a clean intersection with every symplectic leaf,
we have the following
\begin{Thm}
To every Poisson manifold $(M,\pi)$, we may associate a category $\C(M)$ where the objects
are Poisson reductions (as differentiable stacks)
of nice coisotropic submanifolds of $M$
and the morphisms from the object $\underline{C_0}$ to the
object $\underline{C_1}$ are 
elements of the form $\underline{\rubeus}$ (as a differentiable stack)
where $C_0$ and $C_1$ are nice coisotropic submanifolds of $M$ which
reduce to $\underline{C_0}$ and $\underline{C_1}$.
\end{Thm}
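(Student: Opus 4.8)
The plan is to reduce every axiom of the category $\C(M)$ to a single composition formula and then to establish that formula under the niceness hypothesis. Recall that the objects are the Poisson reductions $\underline C$ of nice coisotropic submanifolds, that a morphism $\underline{C_0}\to\underline{C_1}$ is the singular Lie--Weinstein dual pair $\underline\hagrid$ produced by Lemma~\ref{l:dementor}, and that composition is the ``reduce the fibered product'' operation $\star$ of the Composition subsection. As observed there, the inclusion
\[
\underline{\hagr{C_0}{C_2}}\supset\underline\hagrid\star\underline\hagridbis
\]
holds for arbitrary coisotropic $C_0,C_1,C_2$, so the whole theorem follows once we prove the reverse inclusion, that is the \emph{composition formula}
\[
\underline{\hagr{C_0}{C_2}}=\underline\hagrid\star\underline\hagridbis
\]
for nice $C_0,C_1,C_2$. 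Granting it, the axioms are immediate: the right-hand side is again of the required form, so $\star$ is internal to the morphisms; since it does not mention $C_1$, the composite is independent of the representative chosen for the middle object; the two unit laws are the cases $C_2=C_1$ and $C_1=C_0$ of the formula, with $\underline{\hagr CC}$ as the identity of $\underline C$; and associativity holds because both bracketings of $\underline{\hagr{C_0}{C_1}}\star\underline{\hagr{C_1}{C_2}}\star\underline{\hagr{C_2}{C_3}}$ collapse, by two applications of the formula, to $\underline{\hagr{C_0}{C_3}}$.

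To prove the composition formula I would work in the symplectic groupoid of Section~\ref{s:fd}. By Theorem~\ref{t:rubeushagrid} I identify $\underline\hagrid$ with $\underline\rubeus$, $\rubeus=s^{-1}(C_0)\cap t^{-1}(C_1)\subset\G(M)$, and likewise the other factor; under this identification $\up_1$ is induced by the target $t$ and the first anchor of the second factor by the source $s$. The groupoid multiplication $m\colon\gtwo\to\G(M)$ restricts to composable pairs $(g_1,g_2)$ with $g_1\in\rubeus$, $g_2\in\rubeusbis$ and $t(g_1)=s(g_2)\in C_1$, and sends them, by concatenation of $\pi$-compatible paths, into $\rub{C_0}{C_2}$. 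The first point is to reconcile this \emph{pointwise} matching with the fibered product \emph{over the reduction} $\underline{C_1}$ that enters $\star$: the endpoint symmetries \eqref{e:betasymm} of $\rubeus$ move $t(g_1)=X(1)$ by precisely the vectors of $\pi^\sharp(N^*_{X(1)}C_1)$, i.e.\ along the characteristic leaf of $C_1$, so whenever $\up_1[g_1]=\up_0[g_2]$ one may slide $g_1$ inside its class in $\underline\rubeus$ until $t(g_1)=s(g_2)$. This identifies $\underline\rubeus\times_{\underline{C_1}}\underline\rubeusbis$ with the reduction of the composable locus, and $m$ descends to a comparison map into $\underline{\rub{C_0}{C_2}}$.

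The heart of the proof, and the step that genuinely uses niceness, is the surjectivity of this comparison map. I would exploit that every $\pi$-compatible path is confined to a single symplectic leaf: from $\dd X+\pi^\sharp(X)\eta=0$ one reads $\dd X\in\im\pi^\sharp(X)=TL$, so a representative $(X,\eta)$ of a class in $\underline{\hagr{C_0}{C_2}}$ traces a leaf $L$ meeting $C_0$ and $C_2$. The symmetries $\Tilde\xi=\pi^\sharp(X)\beta$, $\Tilde e=-D\beta$ of \eqref{e:betasymm} displace an interior value $X(u)$ by an arbitrary vector of $\pi^\sharp(T^*_{X(u)}M)=T_{X(u)}L$, the boundary constraints on $\beta$ leaving the interior free; one may therefore deform $X$ within its class until it crosses $C_1$. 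Because $C_1$ is nice, the map $\rub M{C_1}\to M$ is, as recalled before the theorem, a surjective submersion, so $L$ meets $C_1$ in a nonempty clean intersection; thus the required deformation exists, and splitting the deformed path at a point of $L\cap C_1$ exhibits the given class as a composite. This yields the reverse inclusion, hence the composition formula at the level of points.

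The last task is to upgrade the formula to an isomorphism of differentiable stacks. Under niceness the anchors $\up_i$ are surjective submersions of stacks, so the fibered product defining $\star$ is clean and the reduction-in-stages lemma applies to the relevant tangent spaces, matching the reduced tangent spaces on the two sides of the formula. I expect the genuine obstacle to be exactly the surjectivity step of the previous paragraph: controlling the leaf-wise deformation through $C_1$ and checking that different splittings of one class yield the same point of the composite, so that the comparison map is a well-defined bijection; the attendant clean-intersection bookkeeping needed to keep every space a differentiable stack is the remaining delicate point. One should also verify, from this groupoid description (and as is manifest in the symplectic and trivial examples), that $\underline\hagrid$ depends up to canonical isomorphism only on $\underline{C_0}$ and $\underline{C_1}$, so that the morphism set is well defined and composites of morphisms carrying \emph{a priori} different middle representatives may be formed after identifying those representatives. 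Once the composition formula is known as a stack isomorphism, the category axioms follow formally, as explained in the first paragraph.
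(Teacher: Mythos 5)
The first thing to say is that the paper does not actually prove this theorem: it is asserted on the strength of the two sentences preceding it, namely that niceness makes the maps from $\rub{C_0}M$ and $\rub M{C_1}$ to $M$ surjective submersions, so that the fibered products and reductions entering $\star$ make sense as differentiable stacks and ``the composition is well defined.'' So you have gone beyond the source, and your skeleton is the right one. Reducing all the axioms to the composition formula $\underline{\hagr{C_0}{C_2}}=\underline\hagrid\star\underline\hagridbis$ for nice triples is exactly what is needed for $\star$ to be internal, for $\underline{\hagr CC}$ to be a unit, and for associativity; and your identification of where niceness enters --- every compatible path is confined to one symplectic leaf, every leaf meets $C_1$, and the symmetries \eqref{e:betasymm} let you deform the interior of the base path freely inside the leaf, so every class has a representative crossing $C_1$ --- is precisely the mechanism the paper hints at when it says the identity should hold ``only if every path from $C_0$ to $C_2$ is equivalent to a path that passes through $C_1$.''

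That said, what you have is an outline whose central steps are deferred, and two of the deferrals are genuine gaps rather than bookkeeping. First, surjectivity of the comparison map is only half of the composition formula: you must also show injectivity, i.e.\ that two splittings of one class --- possibly at points of $C_1\cap L$ lying on \emph{different} characteristic leaves of $C_1$ --- are identified by the null foliation of the fibered product. This is where Lemma~\ref{l:dementor} and the surjective-submersion property of the anchors must actually be used; it is not a formality, since in the groupoid picture it amounts to showing that $g_1^{-1}g_1'$ lies in the Lagrangian subgroupoid $\LL(C_1)$ and not merely in $s^{-1}(C_1)\cap t^{-1}(C_1)$. Second, your comparison map is the groupoid multiplication $m$, but multiplicativity of the symplectic form ($m^*\omega=\mathrm{pr}_1^*\omega+\mathrm{pr}_2^*\omega$ on $\gtwo$) shows that $\ker\dd m$ is the null distribution of $\gtwo$ viewed as a coisotropic of $\G(M)\times\G(M)$, whereas the Composition subsection reduces the fibered product inside $S_0\times\overline{S_1}$; already for $M=\bbR^2\times\bbR$ with $\pi=\de_x\wedge\de_y$ and $C_i=\{y=0\}$ the two null foliations differ and $m$ is not constant along the latter. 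So the conventions must be straightened out before the comparison map even exists on the reduction. Finally, the representative-independence of $\underline\rubeus$ (needed for the Hom-sets to be well defined) and the passage from ``canonically isomorphic'' to the on-the-nose equalities required by the category axioms are flagged but not supplied. None of this is addressed in the paper either, but a proof of the theorem has to contain it.
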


\subsection{Groupoid quotients}
As shown in \cite{C}, there is a one-to-one correspondence between
coisotropic submanifolds of a given Poisson manifold $M$ and
(possibly singular) Lagrangian subgroupoids of the (possibly singular)
source-simply-connected symplectic groupoid $\G(M)$ of $M$.
It turns out that $\underline\rubeus$ may be understood as a quotient of groupoids.
Namely, given a groupoid $\G\toto M$ and a subgroupoid $\LLL\toto C\subset M$,
we have a left action of $\LLL$ on $s^{-1}(C)$ and a right action on $t^{-1}(C)$.
We will write $\LLL\backslash\G$ (resp., $\G/\LLL$) as a shorthand notation for
$\LLL\backslash s^{-1}(C)$ (resp., $t^{-1}(C)/\LLL$). If $\LLL(C)$ is the Lagrangian
subgroupoid of $\G(M)$ corresponding to the coisotropic submanifold $C$, we have
\begin{align*}
\underline{\hagr CM} &= \LLL(C)\backslash\G(M),\\
\underline{\hagr MC} &= \G(M)/\LLL(C).\\
\intertext{For two given coisotropic submanifolds $C_0$ and $C_1$, we have instead}
\underline\hagrid &=  \LLL(C_0)\backslash\G(M)/\LLL(C_1):=\\
&=\LLL(C_0)\backslash(s^{-1}(C_0)\cap t^{-1}(C_1))/\LLL(C_1).
\end{align*}
This can be verified by recalling the construction of \cite{C} and comparing it with the one
in this paper. Namely, the results of subsection~\ref{s:sym}, may be rephrased as follows:
\begin{multline*}
\hagrid=\{\text{Lie algebroid morphisms $TI\to T^*M$}\\
\text{with base maps connecting
$C_0$ to $C_1$}\}.
\end{multline*}
Two elements $\gamma_0$ and $\gamma_1$ of $\hagrid$ are defined to be equivalent
if there exists a Lie algebroid morphism
$\Gamma\colon T(I\times J)\to T^*M$, with $J$ an interval, such that
\begin{enumerate}
\item the restriction of $\Gamma$ to the boundaries of $J$ are the two given
morphisms $\gamma_0$ and $\gamma_1$, and
\item the restriction of $\Gamma$ to the boundaries $\{0\}$ and $\{1\}$ of $I$ are Lie
algebroid morphism $TJ\to N^*C_i$, $i=0,1$.
\end{enumerate}
Then $\underline\hagrid$ may be regarded as the quotient of $\hagrid$ by this
equivalence relation.

On the other hand, the Lagrangian subgroupoid corresponding to a coisotropic submanifold
$C$ is shown in \cite{C} to be the source-simply-connected groupoid whose Lie algebroid
is $N^*C$, and this is exactly \cite{CF}
the quotient of the space of Lie algebroid morphisms
$TJ\to N^*C$ by Lie algebroid morphisms $T(J\times K)\to N^*C$ (where $K$ is another interval)
which are trivial on the boundary of $J$. Finally observe that equivalent  Lie algebroid 
morphisms $TJ\to N^*C_i$ act the same way on $\hagrid$.

\begin{Rem}
This construction may be generalized to any Lie groupoid.
\end{Rem}

\section{Lagrangian field theories with boundary}\label{s:LFT}
As mentioned in subsection~\ref{ss:LFT}, the constructions in this paper are related to the general ones for Lagrangian field theories on manifolds with boundary as
in \cite{CMR} and \cite{CMR2}.

We start recalling a few facts.
In general to a compact oriented manifold $N$ (possibly with boundary), of fixed dimension $d$, the theory associates a space of fields $\calF_N$ and a function $S_N$ on $\calF_N$ called the action functional.
The case of this paper, fixing a manifold $M$ and a tensor $\pi$ on it, 
corresponds to $d=2$, $\calF_N$ the space of bundle maps $TN\to T^*M$ and
\[
S_N(X,\eta)=\int_N \braket\eta{\dd X} + \frac12 \braket\eta{\pi^\sharp\eta}
\]
where $X$ is a map $N\to M$, $\eta$ a section of $T^*N\otimes X^*T^*M$ and again $\braket{\ }{\ }$ is
the canonical pairing between the cotangent and the tangent bundles to $M$. Notice that the action functional does not see the symmetric part of $\pi$, so it may be convenient to assume that $\pi$ is a bivector field. When $\pi$ is Poisson, this theory is called the Poisson sigma model \cite{I,SS}. We will call
the general case the bivector sigma model (BSM).

Under certain assumptions (in particular locality), to a compact oriented $(d-1)$\ndash manifold $\Sigma$
the theory also associates an exact weak symplectic manifold  $(\calF^\de_\Sigma,\Omega_\Sigma=\dd\Theta_\Sigma)$ such that
whenever $\Sigma=\de M$ for a compact $d$\ndash manifold $M$ there is a surjective submersion $\pi_M\colon \calF_M\to\calF^\de_{\de M}$.
Denoting by $EL_M$ the zero set\footnote{This is the set of solutions to Euler--Lagrange equations where one ``ignores" the boundary.} 
of the $1$\ndash form $\EL_M:=\dd S_M-\pi_M^*\Theta_{\de M}$, one obtains that the ``evolution relation"
$L_M:=\pi_M(EL_M)$ is isotropic in $\calF^\de_{\de M}$. 
In ``good" theories the $L_M$s should be Lagrangian. Observe that $L_M$ is here defined just as a subset; a good additional condition is that it should be a (possibly immersed) submanifold.
One also has $\calF^\de_{\Sigma\sqcup\Sigma'}=\calF^\de_{\Sigma}\times \calF^\de_{\Sigma'}$ as a product of exact weak symplectic manifolds and
$\calF^\de_{\Sigma^\text{op}}=\overline{\calF^\de_{\Sigma}}$, where $\Sigma^\text{op}$ denotes $\Sigma$ with opposite orientation and bar
denotes the same manifold with opposite one\ndash form. Finally one defines the ``space of Cauchy data"
$\calC_\Sigma$ as the space of points in $\calF^\de_{\Sigma}$ that
can be completed to a pair of points in $L_{\Sigma\times[0,\epsilon]}\subset\calF^\de_{\Sigma}\times\overline{\calF^\de_{\Sigma}}$ 
for some $\epsilon>0$. Under some mild assumptions,
one can show that, if $L_{\Sigma\times[0,\epsilon]}$
is Lagrangian for all $\epsilon$, then $\calC_\Sigma$ is coisotropic.\footnote{The first assumption is that the theory behaves well under diffeomorphims: viz.,
a diffeomorphism of the bulk manifolds induces a diffeomorphism of the corresponding $EL$ spaces. This implies that each $L_{\Sigma\times[0,\epsilon]}$ is
symmetric: $(x,y)\in L_{\Sigma\times[0,\epsilon]}$ if{f} $(y,x)\in L_{\Sigma\times[0,\epsilon]}$. 
The second assumption is locality which in particular implies that
we can restrict solutions; so, if we know that $x$ lies in $\calC_\Sigma$ because there is a $y$ with $(x,y)\in L_{\Sigma\times[0,\epsilon]}$, 
then we also know that for all
$\epsilon'<\epsilon$ there is a $y'$ with $(x,y')\in L_{\Sigma\times[0,\epsilon']}$. (In the case of topological field theories, like the BSM, this part of the argument
is much easier since $L_{\Sigma\times[0,\epsilon]}=L_{\Sigma\times[0,\epsilon']}$ for all $\epsilon,\epsilon'$.)

Suppose now that $x$ lies in $\calC_\Sigma$. Pick a compact neighborhood $U_x$ of $x$ in $\calC_\Sigma$. For each $z$ in $U_x$ there is then
an $\epsilon_z>0$ and a $y$ such that $(z,y)\in L_{\Sigma\times[0,\epsilon_z]}$. Let $\epsilon$ be the maximum $\epsilon_z$ for $z$ in $U_x$.
Thanks to the locality assumption,
we then have the simplified statement that for all $z\in U_x$ there is a $y_z$ such $(z,y_z)\in L_{\Sigma\times[0,\epsilon]}$.
This in particular shows that $T_x\calC_\Sigma$ consists of all $v\in T_x\calF^\de_{\Sigma}$ such that there is a $w\in T_y\calF^\de_{\Sigma}$, with $y:=y_x$,
such that $(v,w)\in T_{(x,y)}L_{\Sigma\times[0,\epsilon]}$. Thanks to the symmetry property, we also see that 
$T_{(x,y)}L_{\Sigma\times[0,\epsilon]}$ is contained in $T_x\calC_\Sigma\oplus T_y\calC_\Sigma$. The orthogonal space of the latter, 
in $T_x\calF^\de_{\Sigma}\oplus \overline{T_y\calF^\de_{\Sigma}}$, is readily seen to be $(T_x\calC_\Sigma)^\perp\oplus (T_y\calC_\Sigma)^\perp$.
Since $L_{\Sigma\times[0,\epsilon]}$ is Lagrangian by assumption, we conclude that $(T_x\calC_\Sigma)^\perp\oplus (T_y\calC_\Sigma)^\perp$
is contained in $T_x\calC_\Sigma\oplus T_y\calC_\Sigma$, so $T_x\calC_\Sigma$ is coisotropic (and so is $T_y\calC_\Sigma$). Since this can be shown for all
$x\in\calC_\Sigma$, we have that $\calC_\Sigma$ is coisotropic.}
In the BSM, we get $(\calF^\de_{S^1},\Theta_{S^1})=(T^*LM,\Theta)$ and 
$\calC_{S^1}=\fluffy$ with the notations of subsection~\ref{ss:caci}.
We have the following
\begin{Thm}
$L_{S^1\times[0,\epsilon]}$ is an immersed Lagrangian submanifold $\forall\epsilon>0$ if{f}\, $\pi$ is Poisson.
\end{Thm}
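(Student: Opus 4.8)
The statement is an equivalence, and the two implications are of rather different difficulty. The \emph{only if} part is short: since $L_{S^1\times[0,\epsilon]}$ is an immersed Lagrangian submanifold for every $\epsilon>0$, and the BSM is local and diffeomorphism invariant, the general fact recalled above---that $\calC_\Sigma$ is coisotropic whenever all the $L_{\Sigma\times[0,\epsilon]}$ are Lagrangian---applies with $\Sigma=S^1$ and shows that $\calC_{S^1}=\fluffy$ is coisotropic. Theorem~\ref{t:onecircle} then forces $\pi$ to be a Poisson bivector field.

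For the \emph{if} part, assume $\pi$ Poisson, so that $\calC:=\fluffy$ is coisotropic in $(\calF^\de_{S^1},\Omega_{S^1})=(T^*LM,\Omega)$ by Theorem~\ref{t:onecircle}, with characteristic distribution described by \eqref{e:betasymmcircle}; write $\overline\calC$ for the same subset of $\overline{T^*LM}$. The plan is to identify $L:=L_{S^1\times[0,\epsilon]}$ with the leafwise relation
\[
R:=\{(\gamma_0,\gamma_1)\in\calC\times\overline\calC : \gamma_0,\ \gamma_1 \text{ lie in the same leaf of } T^\perp\calC\},
\]
and then to show that $R$ is an immersed Lagrangian submanifold of $T^*LM\times\overline{T^*LM}$. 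To obtain $L=R$ I would solve the Euler--Lagrange equations of the BSM on the cylinder $S^1\times[0,\epsilon]$: restricting a bulk solution to either boundary circle gives a field satisfying $\dd X+\pi^\sharp(X)\eta=0$ along $S^1$, hence an element of $\calC$, so $L\subset\calC\times\overline\calC$; moreover the components of the equations transverse to the boundary are exactly the Hamiltonian flow equations of the constraints $\mu_B$ of \eqref{e:muB}, whose Hamiltonian vector fields span \eqref{e:betasymmcircle} (cf.\ the Remark closing subsubsection~\ref{s:symc}), so the two boundary restrictions of a bulk solution are gauge equivalent and $L\subset R$. Conversely, $R\subset L$ by integrating the infinitesimal symmetries \eqref{e:betasymmcircle} and thickening a leafwise path from $\gamma_0$ to $\gamma_1$ into a cylinder solution; the metric-free nature of the BSM makes $L$ independent of $\epsilon$, so it suffices to treat one value.

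Granting $L=R$, the Lagrangian property is linear algebra. At $(\gamma_0,\gamma_1)\in R$ one has
\[
T_{(\gamma_0,\gamma_1)}R=\{(v,w)\in T_{\gamma_0}\calC\oplus T_{\gamma_1}\calC : [w]=h[v]\},
\]
where $[\,\cdot\,]$ is the class in the reduced spaces $T_{\gamma_i}\calC/T^\perp_{\gamma_i}\calC$ and $h$ is the linearized leafwise holonomy, symplectic for the reduced forms. Computing the orthogonal with respect to $\Omega\oplus(-\Omega)$ and using coisotropy, the identity $T^{\perp\perp}\calC=T\calC$ (proved exactly as \eqref{e:Perp}), and the symplecticity of $h$, one finds $(T_{(\gamma_0,\gamma_1)}R)^\perp=T_{(\gamma_0,\gamma_1)}R$, so $R$ is Lagrangian; it is an immersed submanifold because, $\pi$ being Poisson, \eqref{e:betasymmcircle} is involutive and $R$ is the image of the holonomy groupoid of the associated foliation.

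I expect the identification $L=R$, and in particular the inclusion $R\subset L$, to be the main obstacle: it requires integrating the infinitesimal gauge symmetries \eqref{e:betasymmcircle} to genuine bulk solutions of the two-dimensional theory on the cylinder. The self-orthogonality of $R$, by contrast, is routine once this identification and the symplecticity of the reduced holonomy are in hand.
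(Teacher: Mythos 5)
Your proposal is correct and follows essentially the same route as the paper: the only-if direction via coisotropy of $\calC_{S^1}=\fluffy$ and Theorem~\ref{t:onecircle}, and the if direction by identifying $L_{S^1\times[0,\epsilon]}$ with the set of pairs of points on the same characteristic leaf (the paper does this by splitting the Euler--Lagrange equations on the cylinder into a tangential constraint and a transverse evolution along \eqref{e:betasymmcircle}) and then the same reduced-diagonal/graph-of-a-symplectomorphism linear algebra for the Lagrangian property.
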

\begin{proof}
If $L_{S^1\times[0,\epsilon]}$ is Lagrangian $\forall\epsilon>0$, then $\calC_{S^1}=\fluffy$ is coisotropic by the general theory. By Theorem~\ref{t:onecircle}, $\pi$ is then Poisson.

On the other hand, if $\pi$ is Poisson, then $\calC_{S^1}=\fluffy$ is coisotropic by Theorem~\ref{t:onecircle}. 
In addition, $\forall\epsilon>0$, $L_{S^1\times[0,\epsilon]}$ 
consists of all pairs $(x,x')\in\fluffy\times\fluffy$ such that $x$ and $x'$ are on the same characteristic leaf.\footnote{The critical points of $S_N$
are solutions to
\begin{align*}
\dd X &= -\pi^\sharp(X)\eta,\\
\dd\eta_i &= \frac12\de_i\pi^{jk}(X)\eta_j\eta_k.
\end{align*}
For simplicity of notations we work in local coordinates (the rest of the computation may be done in covariant form since, using the first equation, the second
can be written as $\partial\eta_i = \frac12(\nabla\pi)_i^{jk}(X)\eta_j\eta_k$).
We write $\eta=\eta_\parallel-\beta\dd t$, where $\eta_\parallel$ is a $1$\ndash form in the $S^1$ direction, $\beta$ a function and $t$ the coordinate on $[0,\epsilon]$.
Writing $\dd=\dd_\parallel+\dd t\delta$, with $d_\parallel$  the differential in the $S^1$ direction and $\delta$ the partial derivative with respect to $t$,
we may rewrite the equations
as
\begin{align*}
\dd_\parallel X &= -\pi^\sharp(X)\eta_\parallel,\\
\delta X &= \pi^\sharp(X)\beta,\\
\delta\eta_{\parallel i} &= -\dd_\parallel\beta_i
+\de_i\pi^{jk}(X)\beta_j\eta_{\parallel k}.
\end{align*}
The first equation says that the restriction of $(X,\eta)$ to $S^1\times\{t\}$ for each $t$ yields an element of $\calC_\pi(M)$. The two other equations
say that $(X,\eta)$ evolves in the $t$ direction along the characteristic distribution of $\calC_\pi(M)$, cf.\ \eqref{e:betasymmcircle}.} 
This implies that $L_{S^1\times[0,\epsilon]}$ is an immersed Lagrangian
submanifold.\footnote{We follow the analogue proof in \cite[Section 3.5.2]{Contrthesis}).
Since $L_{S^1\times[0,\epsilon]}$ consists of pair of points on the same leaf of the characteristic distribution of $\calC_\pi(M)$, it follows that it is an immersed submanifold. We now prove that it is Lagrangian.
Let $(x,y)$ be in $L_{S^1\times[0,\epsilon]}$. Then, in addition to kowing that
$T_{(x,y)}L_{S^1\times[0,\epsilon]}$ is a subspace of $T_x\calC_\pi(M)\oplus T_y\calC_\pi(M)$,
which in turn is a coisotropic subspace of $T_x(T^*LM)\oplus\overline{T_y(T^*LM)}$, we now also know that it contains 
$(T_x\calC_\pi(M))^\perp\oplus (T_y\calC_\pi(M))^\perp=(T_x\calC_\pi(M)\oplus T_y\calC_\pi(M))^\perp$. Moreover, 
$\underline{T_{[x]}\calC_\pi(M)}:=T_x\calC_\pi(M)/(T_x\calC_\pi(M))^\perp$ gets canonically identified with $T_y\calC_\pi(M)/(T_y\calC_\pi(M))^\perp$. Finally,
$T_{(x,y)}L_{S^1\times[0,\epsilon]}/(T_x\calC_\pi(M)\oplus T_y\calC_\pi(M))^\perp$ is the diagonal in 
$\underline{T_{[x]}\calC_\pi(M)}\oplus\overline{\underline{T_{[x]}\calC_\pi(M)}}$, which is Lagrangian. This proves that
$T_{(x,y)}L_{S^1\times[0,\epsilon]}$ itself is Lagrangian, see \cite[Proposition A.1(3)]{CMR}.
}
\end{proof}

This story extends to the case when $\Sigma$ is only part of $\de M$. If $\de M=\Sigma\sqcup\Sigma'$, then $\pi_M$ is the product of
two surjective submersions $\pi_{M,\Sigma}$ and
$\pi_{M,\Sigma'}$ to $\calF^\de_{\Sigma}$ and $\calF^\de_{\Sigma'}$, respectively.
Upon picking a Lagrangian submanifold $L'$ of $\calF^\de_{\Sigma'}$ on which $\Theta_{\Sigma'}$ vanishes, one sets
$\calF_M^{L'}:=\pi_{M,\Sigma'}^{-1}(L')$. Denoting by $S_M^{L'}$, $\EL_M^{L'}$ and $\pi_{M,\Sigma}^{L'}$ 
the restrictions of $S_M$, $\EL_M$ and $\pi_{M,\Sigma}$ to $\calF_M^{L'}$, one then has
$\EL_M^{L'}=\dd S_M^{L'}-(\pi_{M,\Sigma}^{L'})^*\Theta_{\Sigma}$ and therefore $L_M^{L'}:=\pi_{M,\Sigma}^{L'}(EL_M^{L'})$ is isotropic,
where $EL_M^{L'}$ denotes the zero set of $\EL_M^{L'}$.

A further extension occurs when $M$ is a compact manifold with corners and the codimension\ndash one boundary stratum of $M$, which we denote by $\de M$,
is the union of compact manifolds with boundary $\Sigma$ and $\Sigma'$ joined along their common boundary, the codimension\ndash two boundary stratum of $M$
(we assume that there are no further lower dimensional boundary strata).  The story described in the previous paragraph extends verbatim to this case.

Finally, one can define the ``space of Cauchy data" $\calC_\Sigma^{L'}$ as the space of points in $\calF^\de_{\Sigma}$ that
can be completed to a pair of points in $L_{\Sigma\times[0,\epsilon]}^{L'}\subset\calF^\de_{\Sigma}\times\overline{\calF^\de_{\Sigma}}$ 
for some $\epsilon>0$, where $L'$ is a fixed Lagrangian submanifold of $\calF_{\de\Sigma\times[0,\epsilon]}$.\footnote{Since a diffeomorphism
$[0,\epsilon]\to[0,\epsilon']$ induces a symplectomorphism
$\calF^\de_{\gamma\times[0,\epsilon]}\to\calF^\de_{\gamma\times[0,\epsilon']}$, it is enough to select $L'$ in $\calF_{\gamma\times[0,\epsilon]}$ for a fixed $\epsilon$.} Again, one shows that, if $L_{\Sigma\times[0,\epsilon]}^{L'}$
is Lagrangian for all $\epsilon$, then $\calC_\Sigma^{L'}$ is coisotropic. 

In the BSM, we get $(\calF^\de_{I},\Theta_I)=(T^*PM,\Theta)$ with the notations of the Introduction.
Moreover, for a submanifold $C$ of $M$, the space $L_C$ of bundle maps $T[0,\epsilon]\to N^*C$
is a Lagrangian submanifold of $\calF^\de_{[0,\epsilon]}$. Finally, one has $\calC_I^{L_{C_0}\times L_{C_1}}=\hagrid$ and
\begin{Thm}
$L_{I\times[0,\epsilon]}^{L_{M}\times L_{M}}$ is an immersed Lagrangian submanifold $\forall\epsilon>0$ if{f}\, $\pi$ is Poisson.
If $\pi$ is Poisson and $C_0$ and $C_1$ are coisotropic, then $L_{I\times[0,\epsilon]}^{L_{C_0}\times L_{C_1}}$ is Lagrangian $\forall\epsilon>0$.
\end{Thm}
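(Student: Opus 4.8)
The plan is to mirror the proof of the circle case almost verbatim, replacing $S^1$ by $I$ and threading the conormal boundary conditions through the corner strata. For the first (iff) statement I set $C_0=C_1=M$, so that $\calC_I^{L_M\times L_M}=\hagr MM$. If $L_{I\times[0,\epsilon]}^{L_M\times L_M}$ is Lagrangian for every $\epsilon$, the general theory recalled above forces $\calC_I^{L_M\times L_M}=\hagr MM$ to be coisotropic, whence Theorem~\ref{t:one} gives that $\pi$ is Poisson. For the converse, and simultaneously for the second statement, I assume $\pi$ Poisson (and $C_0,C_1$ coisotropic in the second case), so that $\hagr MM$, respectively $\hagrid$, is coisotropic by Theorem~\ref{t:one}, respectively Corollary~\ref{cor:cor}.

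The heart of the argument is the explicit description of the evolution relation. First I would write out the bulk Euler--Lagrange equations on $N=I\times[0,\epsilon]$ exactly as in the footnote to the circle theorem, splitting $\eta=\eta_\parallel-\beta\,\dd t$ with $t$ the coordinate on $[0,\epsilon]$ and $\eta_\parallel$ in the $I$\ndash direction, obtaining
\begin{align*}
\dd_\parallel X &= -\pi^\sharp(X)\eta_\parallel,\\
\delta X &= \pi^\sharp(X)\beta,\\
\delta\eta_{\parallel i} &= -\dd_\parallel\beta_i+\de_i\pi^{jk}(X)\beta_j\eta_{\parallel k}.
\end{align*}
The only new ingredient is the constraint on the codimension\ndash one strata $\{0\}\times[0,\epsilon]$ and $\{1\}\times[0,\epsilon]$: the condition $\pi_{M,\Sigma'}^{-1}(L_{C_0}\times L_{C_1})$ says precisely that the restricted bundle maps $T[0,\epsilon]\to T^*M$ there take values in $N^*C_0$ and $N^*C_1$, i.e.\ $X(0,t)\in C_0$, $\beta(0,t)\in N^*_{X(0,t)}C_0$ and $X(1,t)\in C_1$, $\beta(1,t)\in N^*_{X(1,t)}C_1$.

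With this in hand the interpretation is immediate: for each fixed $t$ the first equation together with $X(0,t)\in C_0$, $X(1,t)\in C_1$ says that the slice $(X(\cdot,t),\eta_\parallel(\cdot,t))$ is a point of $\hagrid$, while the remaining two equations, together with $\beta(0,t)\in N^*_{X(0,t)}C_0$ and $\beta(1,t)\in N^*_{X(1,t)}C_1$, are exactly the generators \eqref{e:betasymm} of the characteristic distribution of $\hagrid$ computed in subsection~\ref{s:sym}. Hence $(X,\eta)$ evolves in the $t$\ndash direction along a single characteristic leaf, and $L_{I\times[0,\epsilon]}^{L_{C_0}\times L_{C_1}}$ consists exactly of those pairs $\bigl((X,\eta)|_{t=0},(X,\eta)|_{t=\epsilon}\bigr)\in\hagrid\times\hagrid$ lying on the same leaf. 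The same reduction argument as in the circle case (following \cite[Section 3.5.2]{Contrthesis}) then identifies $T_{(x,y)}L_{I\times[0,\epsilon]}^{L_{C_0}\times L_{C_1}}$, modulo $(T_x\hagrid\oplus T_y\hagrid)^\perp$, with the diagonal of $\underline{T_{[x]}\hagrid}\oplus\overline{\underline{T_{[x]}\hagrid}}$, which is Lagrangian by \cite[Proposition A.1(3)]{CMR}; this yields that the relation is an immersed Lagrangian submanifold and completes both statements.

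I expect the main obstacle to be bookkeeping the boundary and corner contributions correctly when extracting the variational equations, since $N=I\times[0,\epsilon]$ is a manifold with corners and the $L'$\ndash constraint lives on the strata $\{0,1\}\times[0,\epsilon]$ while the projection $\pi_{M,\Sigma}$ reads off the strata $I\times\{0,\epsilon\}$. Once the correct boundary terms are isolated, matching the conormal condition on $\beta$ to the endpoint conditions $\beta(0)\in N^*_{X(0)}C_0$, $\beta(1)\in N^*_{X(1)}C_1$ in \eqref{e:betasymm} is routine, and everything else is parallel to the already\ndash proved circle theorem.
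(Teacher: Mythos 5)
Your proposal is correct and follows exactly the route the paper intends: the paper's own proof is only the one-line remark that the argument is ``similar to the case of $S^1$ but now uses Theorem~\ref{t:one}, Corollary~\ref{cor:cor} and the results of subsection~\ref{s:sym},'' and you have filled in precisely those steps — the split Euler--Lagrange equations from the circle footnote, the identification of the $t$\ndash evolution with the characteristic distribution \eqref{e:betasymm} including the conormal boundary conditions on $\beta$, and the diagonal/reduction argument for Lagrangianity. No gaps; this is the intended proof written out in full.
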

The proof is similar to the case of $S^1$ but now uses Theorem~\ref{t:one}, Corollary~\ref{cor:cor} and the results of subsection~\ref{s:sym}.



%
%
%
%
%

\thebibliography{99}
\bibitem{CalFal} I. Calvo and F. Falceto, ``Poisson reduction and branes in Poisson sigma models,"
\lmp{70}, 231\Ndash247 (2004).
\bibitem{C} A. S. Cattaneo, ``On the integration of Poisson manifolds, Lie algebroids, 
and coisotropic submanifolds,''
\lmp{67} (2004), 33\Ndash48.
\bibitem{CC} A.~S.~Cattaneo and I. Contreras, ``Groupoids and Poisson sigma models with boundary," 
\href{http://arxiv.org/abs/1206.4330}{arXiv:1206.4330},
to appear in 
\emph{Geometric, Algebraic and Topological Methods for Quantum Field Theory},
Proceedings of the 7th Villa de Leyva Summer School, World Scientific.
\bibitem{CMR} A.~S.~Cattaneo, P.~Mn\"ev and N.~Reshetikhin,
``Classical BV theories on manifolds with boundaries,'' 
\href{http://arxiv.org/abs/1201.0290}{math-ph/1201.0290}
\bibitem{CMR2} A.~S.~Cattaneo, P.~Mn\"ev and N.~Reshetikhin,
``Classical and quantum Lagrangian field theories with boundary," 
in \href{http://pos.sissa.it/archive/conferences/155/044/CORFU2011_044.pdf}{PoS(CORFU2011)044},
\href{http://arxiv.org/abs/1207.0239}{arXiv:1207.0239}
\bibitem{CF01}  A.~S.~Cattaneo and G.~Felder, ``Poisson sigma models and symplectic  
groupoids,'' in 
{\em Quantization of Singular Symplectic Quotients},  
(ed.\ N.~P.~Landsman, M.~Pflaum, M.~Schlichenmeier), 
Progress in Mathematics \textbf{198} (Birkh\"auser, 2001), 61\Ndash93. 
\bibitem{CF03} A. S. Cattaneo and G. Felder, ``Coisotropic submanifolds in Poisson geometry and branes 
in the Poisson sigma model,'' 
\lmp{69}, 157\Ndash175 (2004).
\bibitem{CZ} A.~S.~Cattaneo and M.~Zambon,
``Pre-Poisson submanifolds,'' 
\travm{17}, 61\Ndash74 (2007).
\bibitem{Contrthesis} I. Contreras, \emph{Relational Symplectic Groupoids and Poisson Sigma Models with Boundary},
Ph.\ D. thesis (Zurich, 2013), 
\href{http://arxiv.org/abs/1306.4119}{http://arxiv.org/abs/1306.4119}
\bibitem{CF} M. Crainic and R. L. Fernandes,
``Integrability of Lie
brackets,''  
\anm{157} (2003), 575\Ndash620.
\bibitem{I} N. Ikeda,
``Two-dimensional gravity and nonlinear gauge theory,''
Ann.\ Phys.\ {\bf 235} (1994), 435\Ndash464.
\bibitem{L} N. P. Landsman, ``Quantized reduction as a tensor product,'' in 
{\em Quantization of Singular Symplectic Quotients},  
(ed.\ N.~P.~Landsman, M.~Pflaum, M.~Schlichenmeier), 
Progress in Mathematics \textbf{198} (Birkh\"auser, 2001),  137\Ndash180.
\bibitem{SS} P. Schaller and T. Strobl,
``Poisson structure induced (topological) field theories,''
Modern Phys. Lett. {\bf A9} (1994), 
3129\Ndash3136.
\bibitem{W} A. Weinstein, ``The local structure of Poisson manifolds,"
\jdg{18}, 
(1983), 523\Ndash557.
\end{document}